\title{On the Optimality and Decay of $p$-Hardy Weights on Graphs}
\author{Florian Fischer}
\address{Florian Fischer, Institute for Applied Mathematics, University of Bonn, Endenicher Allee 60, 53115 Bonn, Germany}
\email{fischer@iam.uni-bonn.de}
\newtheorem{theorem}{Theorem}[section]
\newtheorem{lemma}[theorem]{Lemma}
\newtheorem{proposition}[theorem]{Proposition}
\newtheorem{corollary}[theorem]{Corollary}
\theoremstyle{definition}
\newtheorem{example}[theorem]{Example}
\newtheorem*{remark}{Remark} 
\newtheorem{definition}[theorem]{Definition}
\numberwithin{equation}{section}
\newcommand{\norm}[1]{\left\lVert #1 \right\rVert} 
\newcommand{\abs}[1]{\left\lvert #1\right\rvert} 
\newcommand{\set}[1]{\left\{ #1\right\} }
\newcommand{\ip}[2]{\left\langle #1, #2 \right\rangle}
\newcommand{\p}[1]{\left( #1 \right)^{\langle p-1 \rangle}}
\newcommand{\sse}{\subseteq}
\renewcommand{\epsilon}{\varepsilon}
\renewcommand{\phi}{\varphi}
\newcommand{\NN}{\mathbb{N}}
\newcommand{\ZZ}{\mathbb{Z}}
\newcommand{\RR}{\mathbb{R}}
\newcommand{\TT}{\mathbb{T}}
\newcommand{\HHH}{\mathbb{H}}
\newcommand{\dd}{\mathrm{d}}
\DeclareMathOperator{\sgn}{sgn}
\DeclareMathOperator{\supp}{supp}
\newcommand{\FF}{F}
\newcommand{\KK}{\tilde{K}}
\newcommand{\MM}{\mathcal{M}}
\newcommand{\hh}{\tilde{h}}
\newcommand{\ww}{\tilde{w}}
\newcommand{\Hmm}[1]{\leavevmode{\marginpar{\tiny%
			$\hbox to 0mm{\hspace*{-0.5mm}$\leftarrow$\hss}%
			\vcenter{\vrule depth 0.1mm height 0.1mm width \the\marginparwidth}%
			\hbox to 0mm{\hss$\rightarrow$\hspace*{-0.5mm}}$\\\relax\raggedright #1}}}
\begin{document}
\begin{abstract}
We construct optimal Hardy weights to subcritical energy functionals $h$ associated with quasilinear Schrödinger operators on infinite graphs. Here, optimality means that the weight $w$ is the largest possible with respect to a partial ordering, and that the corresponding shifted energy functional $h-w$ is null-critical. Moreover, we show a decay condition of Hardy weights in terms of their integrability with respect to certain integral weights. As an application of the decay condition, we show that null-criticality implies optimality near infinity. We also briefly discuss an uncertainty-type principle, a Rellich-type inequality and examples.\\
	\\[4mm]
	\noindent  2020  \! {\em Mathematics  Subject  Classification.}
	Primary  \! 39A12; Secondary  31C20; 31C45; 35R02.
	\\[4mm]
	\noindent {\em Keywords.} Hardy inequality, optimal Hardy weights, criticality theory, discrete quasilinear Schrödinger operators, $p$-Laplace operators, weighted graphs.
\end{abstract}

\maketitle


\section{Introduction}
In a letter to Hardy in 1921,  \cite{Landau21}, Landau gave a proof of the following inequality
\begin{align*}
\sum_{n=1}^\infty \abs{\phi(n)-\phi(n-1)}^p \geq C_p \sum_{n=1}^\infty w_p^{H}(n){\abs{\phi(n)}^p},
\end{align*}
for all $\phi\in C_c(\NN), \phi(0)=0$, and  $p\in (1,\infty)$, where
\[w_p^{H}(n):= \frac{1}{n^p},\quad n\in \NN, \qquad \text{and}\qquad C_p:=\left(1-\frac{1}{p}\right)^p.\]

Landau also proved that the constant $C_p$ is sharp. This inequality was stated before by Hardy, though hidden in a proof of Hilbert's double series theorem, see \cite{Hardy20}. Therefore, it got the name \emph{Hardy inequality} with \emph{Hardy weight} $C_p\cdot w_p^{H}$.

Recently, in \cite{FKP}, it was shown that $C_p\cdot w_p^{H}$ can be improved by
\begin{align*}
	w_p^{\mathrm{opt}}(n):=\left( 1-\left(1-\frac{1}{n}\right)^{\frac{p-1}{p}}\right)^{p-1}-\left( \left(1+\frac{1}{n}\right)^{\frac{p-1}{p}}-1\right)^{p-1}, \qquad n\in \NN, 
	\end{align*}
 and thus, the question of finding the `largest' possible Hardy weight occurs. As a special case of  our main result (Theorem~\ref{thm:mainresultc<0}), we show that the Hardy weight $w_p^{\mathrm{opt}}$ obtained in \cite{FKP} is \emph{optimal} and cannot not be improved.  We remark that the corresponding classical $p$-Hardy weight on $[0,\infty)$ is already optimal, see \cite{DP16}.
 
The original Hardy inequality was generalised to various contexts. In an abstract way, it can be stated as
\[E \geq W,\]
where $E$ is an energy functional associated with a non-negative operator, and $W$ is the canonically obtained non-negative functional from a positive weight function $w$. For a detailed analysis, we refer to the monographs \cite{BEL15, KPS17, OK90}, and references therein. The aim is now to make $w$ as large as possible with large support, i.e., to make $w$ optimal in a certain sense. This was proposed first in \cite[page 6]{Ag82}. 

In \cite{DFP}, an \emph{optimal Hardy weight} $w$ associated with a linear Schrödinger operator on domains in $\RR^d$ (or on non-compact Riemannian manifolds) was defined first. Roughly speaking, $w$ is optimal if
\begin{itemize}
\item $w$ is critical, i.e., for all $\ww \gneq w$ the Hardy inequality fails,
\item $w$ is null-critical, i.e., the corresponding ground state is not an eigenfunction, and
\item $w$ is optimal near infinity, i.e., for any $\lambda >0$, the Hardy inequality outside of any compact set fails for the weight $(1+\lambda)w$.
\end{itemize}

In \cite{DFP} also a way of obtaining optimal Hardy weights was given. Using a different approach, the main result and definition of optimality of \cite{DFP} were generalised to $p$-Laplacians, $p\in (1,\infty)$, on Riemannian manifolds in \cite{DP16}. Recently, for a large class of potentials (including non-positive potentials), optimal Hardy weights for $p$-Schrödinger operators were  constructed in \cite{Versano}.

On weighted locally finite graphs, inspired by the approaches from \cite{DFP, DP16}, a way of obtaining optimal Hardy weights for linear Schrödinger operators with arbitrary potential parts was given in \cite{KePiPo2}. 

In the present paper, we show in Theorem~\ref{thm:mainresultc<0} how to obtain optimal Hardy weights for $p$-Schrödinger operators with arbitrary potential term on weighted graphs which are essentially locally finite, following partially the approaches in \cite{DP16, KePiPo2, Versano}. To be more specific, we evolve discrete quasilinear versions of them, to prove criticality and null-criticality. The main tools in these proofs in \cite{DP16, KePiPo2, Versano} were a coarea formula and the ground state representation. Corresponding discrete quasilinear versions of both will also be of fundamental importance here.

For proving optimality near infinity, we establish instead discrete versions of results in \cite{KP20}, Theorem~\ref{thm:decay} and Theorem~\ref{thm:decayImpliesOpti}. Specifically, Theorem~\ref{thm:decay} states a necessary decay condition of Hardy weights in terms of a certain $\ell^{1}$-summability with respect to weights which are related to positive superharmonic functions. These results are also valid for arbitrary potential parts.

By a supersolution construction technique, we show how to obtain optimal Hardy weights for $p$-Schrödinger operators. This can be seen as a discrete version of results in \cite{Versano}. The results in the continuum made use of the chain rule, which does not hold on graphs in general. We use instead the mean value theorem to circumvent the problem.

The outline of the present paper is as follows: In Section~\ref{sec:setting}, the basic setting is introduced and the main results are stated. Thereafter, in Section~\ref{sec:decay}, we first prove some convexity-type results which are discrete versions of results in \cite{HPR21, PR15, PP, PT07} and state the ground state representation formula, confer \cite{F:GSR}. Then, we turn to the proofs of Theorem~\ref{thm:decay} and Theorem~\ref{thm:decayImpliesOpti}. In Section~\ref{sec:opti}, we prove a supersolution construction and a coarea formula. Thereafter, we  show a proof of  Theorem~\ref{thm:mainresultc<0}. In Section~\ref{appli}, we show two applications of the Hardy inequality, the Heiseberg-Pauli-Weyl uncertainty principle and a Rellich-type inequality. We close this paper by  discussing briefly some examples. In an appendix, we show that criticality implies the existence of increasing null sequences.

\section{Preliminaries and Main Results}\label{sec:setting}

\subsection{Graphs and Schrödinger Operators}
Let $X$ be a countable infinite set. By a \emph{(weighted) graph} $b$ on $X$ we understand  a symmetric function  $b\colon X\times X \to [0,\infty)$ with zero diagonal such that $ b $ is locally summable, i.e., the vertex degree satisfies \[ \deg(x):=\sum_{y\in X}b(x,y)<\infty, \qquad x\in X.\] The elements of $X$ are called \emph{vertices} and the function $b$ decodes weights between vertices. Two vertices $x, y$ are called \emph{connected} with respect to the graph $b$ if $b(x,y)>0$, in terms $x\sim y$. A set $V\sse X$ is called \emph{connected} with respect to $b$, if for every two vertices $x,y\in V$ there are vertices ${x_0,\ldots ,x_n \in V}$, such that $x=x_0$, $y=x_n$ and $x_{i-1}\sim x_i$ for all $i\in\set{1,\ldots, n-1}$. For $V\sse X$ let $\partial V=\set{y\in X\setminus V : y\sim z\in V}$ denote the exterior boundary of $V$.
Throughout this paper we will always assume that 
\begin{center}$X$ is connected with respect to $b$.\end{center}

A graph $b$ on $X$ is called \emph{locally finite} if for all $x\in X$ 
\[\# \set{y\in X : y\sim x  }< \infty.\]

Let $S$ be some arbitrary set, then $f\colon S\to \RR$ is called \emph{non-negative}, \emph{non-positive}, \emph{positive}, or \emph{strictly positive} on $I\sse S$, if $f\geq 0$, $f\leq 0$, $f\gneq 0$, $f>0$ on $I$, respectively. We set $f_-=(-f)\vee 0$. If for two non-negative functions $f_1, f_2\colon S\to \RR $ there exists a constant $C>0$ such that $C^{-1}f_1 \leq f_2 \leq Cf_1$ on $I\subset S$, we write \[f_1 \asymp f_2 \quad\text{ on }I,\]
and call them \emph{equivalent} on $I$. 

 The space of real valued functions on $V\subseteq X$ is denoted by $C(V)$ and the space of functions with compact support in $V$ is denoted by $ C_c(V)$. We consider $C(V)$ to be a subspace of $C(X)$ by extending the functions of $C(V)$ by zero on $X\setminus V$.  
 
We also introduce the linear difference operator $\nabla$ on $C(X)$ defined via
\[\nabla_{x,y}f:=f(x)-f(y), \qquad x,y\in X, f\in C(X).\]

Now, we turn to Schrödinger operators: Let $p\geq 1$. For $V\sse X$ let the \emph{formal space} $ \FF(V):=\FF_{b,p}(V) $ be given by
\begin{align*}
\FF(V):= \{ f\in C(X): \sum_{y\in X} b(x,y)\abs{\nabla_{x,y}f}^{p-1} < \infty  \mbox{ for all } x\in V  \}.
\end{align*}
We set $F:=F(X)$. Note that on locally finite graphs, we have $F=C(X)$.

For $1\leq p<2$ we make the convention that $\infty\cdot 0 =0$. Then, for all $p\geq 1$ and $a\in \RR$, we set
\[ \p{a}:= |a|^{p-1} \sgn (a)=|a|^{p-2} a. \]
Here, $\sgn\colon \RR\to \set{-1,0,1}$ is the sign function, that is, $\sgn(\alpha)=1$ for all $\alpha> 0$, $\sgn(\alpha)=-1$ for all $\alpha< 0$, and $\sgn(0)=0$.

Let $p\geq 1$ and $c, m\in C(X)$ and $m>0$ on $V$, then the quasilinear \emph{(formal) ($p$-)Schrödinger operator} $H:=H_{b,c,p,m}\colon \FF(V)\to C(V)$ is given by
\[Hf(x):=Lf(x)+\frac{c(x)}{m(x)}\p{f(x)}, \qquad x\in V,\]
where $L:=L_{b,p, m} \colon \FF(V)\to C(V)$  defined via
\[ Lf(x):=\frac{1}{m(x)}\, \sum_{y\in X} b(x,y)\p{\nabla_{x,y}f}, \qquad x\in V,\]
is called the \emph{(formal) ($p$-)Laplacian} and the function $c$ is usually called \emph{potential} of $H$. If $c\geq 0$, then Schrödinger operator sometimes also go under the name \emph{Laplace-type operators}. The function $m$ represents weights on the vertices.

A function $u\in \FF(V)$ is said to be \emph{($p$-)harmonic, (($p$-)superharmonic, strictly ($p$-)superharmonic, ($p$-)subharmonic}) on $V\sse X$ with respect to $H$ if \[Hu=0 \quad(Hu\ge 0,\,Hu \gneq 0, \, Hu\leq 0)\qquad\text{ on }V.\] 
If $V=X$ we only speak of super-/sub-/harmonic functions.

We also need the following definitions: Let $V\sse X$ be connected and $K\sse V$ be finite. By $\MM(V\setminus K)\sse \FF(V\setminus K)$, we denote the set of strictly positive functions $u$ which are $p$-harmonic on $V\setminus K$, and which have the following minimal growth property: for any  finite and connected subset $\KK \sse V$ with $K\sse \KK$, and any strictly positive function $v\in \FF(V\setminus \KK)$ which is $p$-superharmonic in $V\setminus \KK$, we have
\[u\leq v \text{ on } \KK \quad\text{ implies }\quad  u\leq v \text{ in } V\setminus \KK.\]
	If $u\in \MM(V)$, then $u$ is called a \emph{global minimal positive harmonic function} in $X$, or the \emph{Agmon ground state}, see \cite{F:AAP} for details on these two notions.
	
	If for some fixed vertex $o\in V$ a function $g_o\in \MM(V\setminus \set{o})\cap \FF(V)$ satisfies $Hg_o=1_o$ on $V$, then $g_o$ is called \emph{(minimal positive) Green's function} with reference point $o\in V$. 

\subsection{Energy Functionals Associated with Graphs}
Let $p\geq 1$ and $c\in C(X)$ arbitrary. The \emph{(p-)energy functional} $h=h_{b,c,p}\colon C_c(X)\to \RR$ is defined via 
\[ h(\phi):=\frac{1}{2}\sum_{x,y\in X} b(x,y)\abs{\nabla_{x,y}\phi}^p+ \sum_{x\in X}c(x)\abs{\phi(x)}^p.\]

The double sum part in the definition of $h$ is sometimes called kinetic energy functional and the part of the second sum,  potential energy functional. If $p=2$ such a functional is a quadratic form, and then sometimes called Schrödinger form. 

The connection between $H$ and $h$ on $C_c(X)$ can be described via a Green's formula which is stated next, for a proof see \cite{F:GSR}.

Let $V\sse X$. To shorten notation, we define a weighted bracket $\ip{\cdot}{\cdot}_{V}$ on $C(X)\times C_c(X)$ via
\[\ip{f}{\phi}_{V}:=\sum_{x\in V}f(x)\phi(x)m(x),\qquad f\in C(X), \phi\in C_c(X).\]
 
\begin{lemma}[Green's formula, Lemma~2.3 in \cite{F:GSR}]\label{lem:GreensFormula}
	Let $p\geq 1$ and $V\sse X$. Let $f\in \FF(V)$ and $\phi\in C_c(X)$. Then all of the following sums converge absolutely and
	\begin{align*}
		\ip{Hf}{\phi}_{V}
		&=\frac{1}{2}\sum_{x,y\in V} b(x,y)\p{\nabla_{x,y}f}( \nabla_{x,y}\phi)+ \sum_{x\in V}c(x)\p{f(x)}\phi(x)\\
		&\qquad +\sum_{x\in V, y\in \partial V}b(x,y)\p{\nabla_{x,y}f}\phi(x). 
	\end{align*}
	In particular, the formula can be applied to $f\in C_c(X)$, and
	\[h(\phi)=\ip{H\phi}{\phi}_{V}, \qquad \phi\in C_c(V).\]
\end{lemma}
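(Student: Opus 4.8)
The plan is to unravel both sides directly from the definitions, reduce the identity to a symmetrisation of the kinetic term, and keep careful track of absolute convergence so that all the rearrangements are licit. First I would expand the left-hand side: by the definitions of the bracket, of $H$ and of $L$,
\[
\ip{Hf}{\phi}_{V}=\sum_{x\in V}\left(\frac{1}{m(x)}\sum_{y\in X}b(x,y)\p{\nabla_{x,y}f}+\frac{c(x)}{m(x)}\p{f(x)}\right)\phi(x)\,m(x),
\]
and the vertex weights $m(x)$ cancel. The potential part immediately gives $\sum_{x\in V}c(x)\p{f(x)}\phi(x)$, which is the second term of the claimed formula, so it remains to treat the raw kinetic term $T:=\sum_{x\in V}\sum_{y\in X}b(x,y)\p{\nabla_{x,y}f}\phi(x)$.

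Before any rearrangement I would establish absolute convergence of every sum that appears, and this is the main technical point of the proof. Since $\abs{\p{a}}=\abs{a}^{p-1}$ and $\phi\in C_c(X)$, each outer summation collapses to the finite set $V\cap\supp\phi$; for each such vertex $x\in V$ the inner sum $\sum_{y\in X}b(x,y)\abs{\nabla_{x,y}f}^{p-1}$ is finite precisely because $f\in\FF(V)$. For the double sum over $V\times V$ I would use $\abs{\nabla_{x,y}\phi}\le\abs{\phi(x)}+\abs{\phi(y)}$: the $\abs{\phi(x)}$-part is handled as above, while for the $\abs{\phi(y)}$-part I would sum over $y\in V\cap\supp\phi$ first and invoke the symmetry $b(x,y)=b(y,x)$ together with $\abs{\nabla_{x,y}f}=\abs{\nabla_{y,x}f}$ to bound the inner sum by $\sum_{x\in X}b(y,x)\abs{\nabla_{y,x}f}^{p-1}<\infty$, which is finite because $y\in V$. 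The boundary and potential sums are finite by the same support argument. This is exactly where the three hypotheses $f\in\FF(V)$, $\phi\in C_c(X)$ and the symmetry of $b$ are all genuinely needed.

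Having secured absolute convergence, I would split the inner sum in $T$ over $y\in X$ into $y\in V$ and $y\in\partial V$; the remaining vertices contribute nothing, since $b(x,y)=0$ whenever $x\in V$ and $y\notin V\cup\partial V$. The part over $\partial V$ is precisely the boundary term of the formula. For the part $T_1:=\sum_{x,y\in V}b(x,y)\p{\nabla_{x,y}f}\phi(x)$, I would interchange the names of $x$ and $y$ and use the symmetry of $b$, the antisymmetry $\nabla_{y,x}f=-\nabla_{x,y}f$, and the oddness $\p{-a}=-\p{a}$ to obtain $T_1=-\sum_{x,y\in V}b(x,y)\p{\nabla_{x,y}f}\phi(y)$. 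Averaging the two expressions for $T_1$ then yields
\[
T_1=\frac12\sum_{x,y\in V}b(x,y)\p{\nabla_{x,y}f}\bigl(\phi(x)-\phi(y)\bigr)=\frac12\sum_{x,y\in V}b(x,y)\p{\nabla_{x,y}f}(\nabla_{x,y}\phi),
\]
which is the kinetic term, completing the proof of the identity.

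Finally, for the ``in particular'' statement I would set $f=\phi\in C_c(V)$. Then $\phi$ vanishes on $\partial V\sse X\setminus V$, so in the boundary term $\nabla_{x,y}\phi=\phi(x)$, and using $\p{a}\,a=\abs{a}^p$ the potential term, the kinetic term and the boundary term become $\sum_{x\in V}c(x)\abs{\phi(x)}^p$, $\tfrac12\sum_{x,y\in V}b(x,y)\abs{\nabla_{x,y}\phi}^p$ and $\sum_{x\in V,\,y\in\partial V}b(x,y)\abs{\phi(x)}^p$, respectively. The last two sums are exactly the diagonal $V\times V$ block and the two off-diagonal $V\times\partial V$ blocks of $\tfrac12\sum_{x,y\in X}b(x,y)\abs{\nabla_{x,y}\phi}^p$ (the $\partial V\times V$ block contributing the same as $V\times\partial V$ by symmetry, and the complement vanishing since $\phi$ is supported in $V$), while $\sum_{x\in V}c(x)\abs{\phi(x)}^p=\sum_{x\in X}c(x)\abs{\phi(x)}^p$. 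Hence $\ip{H\phi}{\phi}_{V}=h(\phi)$, as claimed.
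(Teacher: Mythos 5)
Your proposal is correct and complete: the expansion of $\ip{Hf}{\phi}_{V}$, the absolute-convergence bookkeeping (finite support of $\phi$, the defining property of $\FF(V)$, and the symmetry of $b$ for the $\abs{\phi(y)}$-part), the splitting of the inner sum into $V$, $\partial V$ and the vanishing remainder, and the symmetrisation/averaging step are exactly the standard argument. The paper itself does not reprove this lemma but defers to Lemma~2.3 of \cite{F:GSR}, and your proof is essentially that proof, so there is nothing to correct.
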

Moreover, another nice connection between $h$ and $H$ is that $p\cdot H$ is the G\^{a}teaux derivative of $h$ on $C_c(X)$, see \cite{F:AAP} for details.

If the functional $h$ is non-negative on $C_c(V)$, $V\sse X$, then $h$ is called \emph{($p$-)subcritical} in $V$ if the \emph{Hardy inequality} holds true, that is, there exists a positive function $w$ such that \[ h \geq \norm{\cdot}_{p,w}^{p} \qquad \text{on } C_c(V).\]
Here, \[\norm{\phi}_{p,w}:=\biggl(\sum_{x\in X}\abs{\phi(x)}^pw(x)\biggr)^{1/p}, \qquad \phi\in C_c(X).\]

If such a $w$ does not exist, then $h$ is called \emph{($p$-)critical} in $V$. Moreover, $h$ is called \emph{($p$-)supercritical} in $V$ if $h$ is not non-negative on $V$. 
	
\begin{remark}
This classification of energy functionals in terms of sub-/super-/critical goes back to \cite{Si0}, see also \cite{Mur84, Mur86, P88}, and is motivated from the analysis of the energy functional $h$.

In the special case of non-negative potentials and $p=2$, a graph is usually called \emph{transient} if $h$ is subcritical in $X$, and \emph{recurrent} otherwise, see e.g. \cite{KLW21}. This notation has its origin in probability theory, see e.g. \cite{WoessMarkov}, and goes back at least to \cite{Polya}. It is also common when studying Dirichlet forms, see e.g. \cite{Fuk, Stu94}, whereas subcritical and critical is used for Schrödinger forms, see \cite{Tak14, Tak16, Tak23, TU21, M23, S22}. In a hand waving way, recurrence means that the associated random walker returns almost surely, and transience means that the random walker escapes any finite set with positive probability.

Moreover, if $c=0$, then a graph is sometimes called \emph{$p$-hyperbolic} if $h$ is $p$-subcritical in $X$, and \emph{$p$-parabolic} otherwise. This notation has its origin in the geometry of surfaces, see here e.g. \cite{Gri99, Y84, SY93Class, SY93Para, PRS05, PS14, T00, T99, Y77}.
\end{remark}	
	
It is shown in \cite{F:AAP}, that $h$ is critical if and only if there exists a unique positive superharmonic function (up to multiplies). This function is harmonic and the Agmon ground state of $h$.

Let $h$ be a critical energy functional. We call $h$ \emph{null-critical}  with respect to the non-negative function $w$ in $V$ if the Agmon ground state is not in $\ell^p(V,w)$, and otherwise we call it \emph{positive-critical} with respect to $w$ in $V$. 

Here, for all $1\leq p < \infty$, $V\sse X$ and non-negative functions $w$ on $V$, we define 
\begin{align*}
\ell^p(V,w)&:=\bigl\{f\in C(X): \sum_{x\in V}\abs{f(x)}^pw(x)<\infty\bigr\},\quad \text{and} \\
\ell^{\infty}(V)&:=\bigl\{f\in C(X): \sup_{x\in V}\abs{f(x)}<\infty\bigr\}.
\end{align*}
Note that $(\ell^p(X,w),\norm{\cdot}_{p,w})$ is a reflexive Banach space for $p\in (1,\infty)$, and a Banach space for $p=1$. 

The focus of this paper is on estimates of functionals. It is therefore comfortable to use the following notation: Any function $w\in C(X)$ gives rise to a canonical $p$-functional $w_p$ on $C_c(X)$ via
\[w_p(\phi):=\norm{\phi}^p_{p,w}= \sum_{x\in X}\abs{\phi(x)}^pw(x), \qquad \phi\in C_c(X).\]
The Hardy inequality then reads as $h-w_p\geq 0$ on $C_c(X)$ for some $w\gneq 0$ on $X$. If it is clear that we mean the functional $w_p$ and not the function $w$, we sometimes simply write $w$ instead of $w_p$.

If not stated otherwise, we will always assume that 
\[p\in (1,\infty).\]

Moreover, we always explicitly mention the assumption that the underlying graph is locally finite.

\subsection{Main Results}
We have three main results and in the first two a positive superharmonic function has to satisfy two properties, which are defined next: A function $f\in C(X)$ that is positive on $V\sse X $  is called \emph{almost proper} on $V$ if $f^{-1}(I)\cap V$ is a finite set for any compact set $I\sse (0,\infty)$. Note that this definition differs slightly from standard definitions of a proper function as we allow $u$ to vanish infinitely often on $V$. Moreover, a strictly positive and almost proper function on $V$ is called \emph{proper} on $V$. If $f$ is a proper function on $V$, then $ 0$ and $ \infty$ are the only possible accumulation points of $f|_V$, and if $V$ is also infinite at least one of them is always an accumulation point.

Moreover, $f\in C(X)$ is called \emph{of bounded oscillation} on $V$ if \[\sup_{x,y\in V, x\sim y} \abs{f(x)/f(y)}<\infty.\] In particular, such a function cannot vanish on $V$.

\begin{remark}[Locally finiteness]
	If there exists an almost proper function of bounded oscillation on $V\cup \partial V\sse X$ (where $\partial V$ might be empty), then the graph is locally finite on $V$. 
	
	This can be seen as follows: Assume that $f$ is such a function. First note that $f>0$ on $V\cup \partial V$ since $f$ is of bounded oscillation. Then, being proper on $V\cup \partial V$ implies that $0$ or $\infty$ are the only accumulation points of $f|_{V\cup \partial V}$. Being of bounded oscillation implies that neighbouring vertices never reach an accumulation point. Thus, there exists a compact set in $(0,\infty)$ containing all the images of $f|_{V\cup \partial V}$ of neighbours of a vertex and by the properness this implies that any vertex can only have finitely many neighbours, i.e., the graph is locally finite on $V$.
\end{remark}

We also need the definition of an optimal Hardy weight, confer \cite{KePiPo2} in the discrete $(p=2)$-setting and \cite{DP16} in the continuum.

\begin{definition}\label{def:optimal}
	The function $w\gneq 0$ is called an \emph{optimal ($p$-)Hardy weight} for the energy functional $h$ in $V$ if
	\begin{enumerate}[label=(\roman*)]
		\item $h-w_p$ is critical in $V$,
		\item\label{def:optimalNC} $h-w_p$ is null-critical with respect to $w$ in $V$,
		\item\label{def:optimalNI} $h-w_p\geq \lambda w_p$ fails to hold on $C_c(V\setminus K)$ for all $\lambda >0$ and finite $K\sse V$, i.e., $w$ is \emph{optimal near infinity} for $h$.
	\end{enumerate}
\end{definition}

The first main result tells us now  how to find such  optimal Hardy weights. The associated version in the continuum is given in \cite[Theorem~1.1]{Versano}. Recall that by the Harnack inequality, see \cite{F:GSR}, a positive superharmonic function is strictly positive.

\begin{theorem}[Optimal Hardy weights]\label{thm:mainresultc<0}
	Let $V\sse X$ be non-empty and infinite such that $(V,b|_{V\times V})$ is locally finite on $V$. Let $h$ be a subcritical $p$-energy functional in $V$ with arbitrary potential $c$, corresponding $p$-Schrödinger operator $H:=H_{b,c,p,m}$, and $p$-Laplacian $L:=L_{b,p,m}$, $p>1$. 
	
	Suppose that $0\lneq u \in \FF(V)\cap C(V)$ is a proper function of bounded oscillation on $V$ such that $\tilde{H} u \geq 0$ on $V$, where $\tilde{H}:=H_{b,C_{p}\cdot c,p,m}$, and
	$C_{p}:=\left(p/(p-1)\right)^{p-1}.$
	
	Furthermore, assume that $Lu\in \ell^1(V,m)$, $u\in\ell^{p-1}(V,c_-)$, and 
	\begin{enumerate}[label=(\alph*)]
		\item\label{thm:mainresultc<01} $u$ takes its maximum on $V$, or  there exists $S>0$ such that for all $x\in V$ with $u(x)>S$ we have $Lu(x) \leq  0$, and
		\item\label{thm:mainresultc<02}  $u$ takes its minimum on $V$, or  there exists $I>0$ such that for all $x\in V$ with $u(x)<I$ we have $Lu(x) \geq  0$.
	\end{enumerate}
	Then, \[w:=\frac{H(u^{(p-1)/p})}{u^{(p-1)^2/p}},\] multiplied with $m$ is an optimal $p$-Hardy weight of $h$ on $V$.
\end{theorem}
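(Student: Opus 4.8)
The plan is to put $v:=u^{(p-1)/p}$ and to exploit that the weight in the statement is designed precisely so that $v$ becomes the Agmon ground state of the shifted functional $h-(wm)_p$. Since $u^{(p-1)^2/p}=v^{p-1}$, the defining formula reads $w\,v^{p-1}=Hv$, and hence the $p$-Schrödinger operator $H_{b,c-wm,p,m}$ associated with $h-(wm)_p$ satisfies $H_{b,c-wm,p,m}v=Hv-w\,v^{p-1}=0$ on $V$; that is, $v$ is a strictly positive $p$-harmonic function for the shifted functional. Before anything else I would establish the Hardy inequality itself together with $wm\gneq 0$: using the supersolution hypothesis $\tilde H u\ge 0$ (with the scaled potential $C_p c$) and the discrete mean value theorem in place of the chain rule---this is the supersolution construction of Section~\ref{sec:opti}---one obtains the pointwise bound $w=Hv/v^{p-1}\ge 0$, not identically zero. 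The existence of the positive $p$-harmonic function $v$ then yields $h-(wm)_p\ge 0$ on $C_c(V)$ via the positivity criterion relating positive supersolutions to non-negativity of the functional from \cite{F:AAP}.

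Next I would prove criticality, statement~(i) of Definition~\ref{def:optimal}. Here I use that, for a non-negative functional, criticality is equivalent to the existence of a null sequence, equivalently to uniqueness of the positive superharmonic function \cite{F:AAP}. The idea is to feed the ground state representation of \cite{F:GSR} and the coarea formula of Section~\ref{sec:opti} with truncations of $v$ adapted to the level sets of $v$ (essentially cut-offs of a $\log v$-type function). The coarea formula converts the ground-state-transformed $p$-energy of these truncations into a sum over level sets, and the properness of $u$---so that $0$ and $\infty$ are the only accumulation points of $v$---together with the bounded oscillation of $u$ (which keeps the transformed edge weights comparable and prevents jumps across an accumulation point) lets me choose the truncation parameters so that this sum tends to $0$ while the sequence converges to $v$ locally. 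Hypotheses~\ref{thm:mainresultc<01} and~\ref{thm:mainresultc<02} are used to fix the sign of $Lu$ near the two ends, which is what makes the level-set sums behave monotonically.

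For null-criticality, statement~(ii), I must show $v\notin\ell^p(V,wm)$, i.e.\ $\sum_{x\in V}m(x)v(x)Hv(x)=+\infty$, since $(wm)v^p=m\,v\,Hv$. I would expand $m\,v\,Hv$ through Green's formula (Lemma~\ref{lem:GreensFormula}) and reorganise it with the coarea formula into a principal term that diverges like the \emph{length} of the range of $v$ (a logarithmic-type divergence as $v$ exhausts $(0,\infty)$) plus finite correction terms. The standing integrability assumptions $Lu\in\ell^1(V,m)$ and $u\in\ell^{p-1}(V,c_-)$ guarantee that these correction terms are finite, while~\ref{thm:mainresultc<01}--\ref{thm:mainresultc<02} again pin down the needed signs, so the divergence survives. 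Finally, optimality near infinity, statement~(iii), needs no separate argument: the function $u$ is almost proper of bounded oscillation, so the decay result Theorem~\ref{thm:decay} applies and Theorem~\ref{thm:decayImpliesOpti} turns the null-criticality just established into optimality near infinity.

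The main obstacle I expect is the criticality step, and more precisely the systematic replacement of the chain rule by the mean value theorem. On graphs there is no identity relating $\nabla(u^{\alpha})$ to $u^{\alpha-1}\nabla u$, so each place where the continuum proofs of \cite{DP16, Versano} differentiate a power must be replaced by a mean-value estimate, and I must control the resulting error terms uniformly enough that the coarea and null-sequence machinery still delivers energies tending to zero. The same difficulty reappears, in summable form, in the divergence estimate for null-criticality; by contrast, once (i) and (ii) are in hand, (iii) comes essentially for free.
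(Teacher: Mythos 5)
Your proposal follows the same architecture as the paper's proof: the mean-value-theorem supersolution construction makes $v:=u^{(p-1)/p}$ strictly superharmonic (giving $wm\gneq 0$ and the Hardy inequality), criticality is obtained exactly as in Proposition~\ref{prop:criticalHardyNonPositive} (log-type cutoffs $\psi_n\circ v$, the ground state representation of Theorem~\ref{thm:GSR}, and the coarea formula of Proposition~\ref{prop:coarea}, where $Lu\in\ell^1(V,m)$ and hypotheses \ref{thm:mainresultc<01}--\ref{thm:mainresultc<02} force $g\asymp 1$), and optimality near infinity is delegated to Theorem~\ref{thm:decay} and Theorem~\ref{thm:decayImpliesOpti}, precisely as in Subsection~\ref{sec:optimal}.

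The null-criticality step, however, contains a genuine gap. You propose to compute $(wm)_p(v)=\sum_{x\in V}m(x)v(x)Hv(x)$ by ``expanding $m\,v\,Hv$ through Green's formula'' into a divergent kinetic term plus finite corrections. Green's formula (Lemma~\ref{lem:GreensFormula}) is only valid when the test function is compactly supported, and $v$ is not; the summation by parts produces edge terms between an exhausting set and its complement which your hypotheses do not control, and the identity you rely on is false in general: for $c=0$ and $v$ a positive non-constant $p$-harmonic function of infinite energy (such functions exist, e.g., on regular trees), the left-hand side is $0$ while the kinetic term is $+\infty$. Whether such an identity survives for the ground state is exactly the dichotomy between positive- and null-criticality, so it cannot be taken for granted. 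The paper's proof (Proposition~\ref{prop:nullcritNonPositive}) avoids this: using the criticality just established, it takes an \emph{increasing} null sequence $e_n^{1/q}\nearrow u^{1/q}$ (Appendix~\ref{app}), for which $(wm)_p(e_n^{1/q})=h(e_n^{1/q})-\bigl(h-(wm)_p\bigr)(e_n^{1/q})$ holds legitimately, the subtracted term tends to $0$, the kinetic part of $h(e_n^{1/q})$ diverges by the elementary inequality of Lemma~\ref{lem:nullcriticality} together with the coarea formula with $f(t)=1/t$ (properness gives $\int_{\inf_V u}^{\sup_V u}t^{-1}\,\dd t=\infty$), and the potential part is bounded below uniformly in $n$ because $0\leq e_n^{1/q}\leq u^{1/q}$ and $u\in\ell^{p-1}(V,c_-)$; domination by $u^{1/q}$ then yields $(wm)_p(u^{1/q})=\infty$. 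Note that your own cutoffs $v\cdot(\psi_n\circ v)\leq v$ from the criticality step are null sequences dominated by $v$, so they would serve the same purpose and let you bypass the appendix; but some such reduction to compactly supported functions is indispensable, and it is the idea missing from your step (3).
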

Note that a proper function of bounded oscillation on an infinite graph cannot have both, a maximum and a minimum. 

We remark that a sufficient condition for $Lu\in \ell^1(V,m)$ is $\tilde{H}u\in \ell^1(V,m)$ and $u\in \ell^{p-1}(V,\abs{c})$ by the triangle inequality. Moreover, if $\tilde{H}u \in C_c(V)$ -- in other words, $u$ is $p$-harmonic outside of a finite set with respect to $\tilde{H}$, then the conditions $\tilde{H}u \in \ell^1(X,m)$,   \ref{thm:mainresultc<01} and \ref{thm:mainresultc<02} are satisfied. In \cite{KePiPo2}, where the case $p=2$ was investigated, the assumption is that $u$ should be harmonic with respect to $H$. Hence, in a certain sense the assumptions here generalise the result in  \cite{KePiPo2}.  Furthermore, if $0\leq c\in C_c(V)$, then the conditions $u \in \ell^{p-1}(V,c)$ and  \ref{thm:mainresultc<02} are satisfied. 

In addition to that, we remark that in the linear ($p=2$)-case no multiplication of the potential with a constant $C_2$ is needed, see \cite{KePiPo2}. There, a discrete chain rule of the square root was applied which remains unknown for $p\neq 2$. However, also for the quasi-linear counterpart in the continuum (that is \cite[Theorem~1.1]{Versano}), the same constant is needed.

 As a corollary, we get optimal Hardy weights to Laplace-type operators. The corresponding version in the continuum is given in \cite[Theorem~1.5]{DP16}. 

\begin{corollary}[Optimal Hardy weights for non-negative potentials]\label{cor:mainresultc>0}
	Let $V\sse X$ be non-empty and infinite such that $(V,b|_{V\times V})$ is locally finite on $V$, $p>1$. Let $h$ be a subcritical $p$-energy functional in $V$ with non-negative potential $c$ and corresponding $p$-Schrödinger operator $H$, and $p$-Laplacian $L$. 
	Suppose that $0\lneq u \in \FF(V)\cap C(V)$ is a proper function of bounded oscillation on $V$ with $ Hu \geq 0$ on $V$ and $Lu\in \ell^1(V,m)$ such that
	\begin{enumerate}[label=(\alph*)]
		\item\label{thm:mainresultc>01} $u$ takes its maximum on $V$, or  there exists $S>0$ such that for all $x\in V$ with $u(x)>S$ we have $Lu(x) \leq  0$, and
		\item\label{thm:mainresultc>02}  $u$ takes its minimum on $V$, or  there exists $I>0$ such that for all $x\in V$ with $u(x)<I$ we have $Lu(x) \geq  0$.
	\end{enumerate}
	Then, \[w:=\frac{H(u^{(p-1)/p})}{u^{(p-1)^2/p}},\] multiplied with $m$ is an optimal $p$-Hardy weight of $h$ on $V$.
\end{corollary}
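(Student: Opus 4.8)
The plan is to derive Corollary~\ref{cor:mainresultc>0} directly from Theorem~\ref{thm:mainresultc<0} by verifying that its hypotheses become the stronger hypotheses of the corollary when $c\geq 0$. First I would observe that the corollary assumes $c$ non-negative, $Hu\geq 0$ on $V$, and $Lu\in\ell^1(V,m)$, together with the same maximum/minimum sign conditions \ref{thm:mainresultc>01} and \ref{thm:mainresultc>02}. Since the conclusion is literally identical (the same weight $w=H(u^{(p-1)/p})/u^{(p-1)^2/p}$ times $m$), it suffices to check that all the premises of Theorem~\ref{thm:mainresultc<0} hold.

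Next I would match the premises one by one. The conditions on $V$, on $h$ being subcritical, on $u$ being a proper function of bounded oscillation in $\FF(V)\cap C(V)$, and the two sign conditions \ref{thm:mainresultc<01}--\ref{thm:mainresultc<02} transfer verbatim. The two nontrivial items to check are $\tilde H u\geq 0$ on $V$, where $\tilde H:=H_{b,C_p\cdot c,p,m}$ with $C_p=(p/(p-1))^{p-1}>1$, and the integrability condition $u\in\ell^{p-1}(V,c_-)$. For the latter, since $c\geq 0$ we have $c_-=0$, so $\ell^{p-1}(V,c_-)$ is all of $C(X)$ and the condition is vacuous. For $\tilde H u\geq 0$, I would compute
\[
\tilde H u(x)=Lu(x)+\frac{C_p\,c(x)}{m(x)}\p{u(x)}
=Hu(x)+\frac{(C_p-1)c(x)}{m(x)}\p{u(x)}.
\]
Because $c\geq 0$, $C_p>1$, and $\p{u(x)}=u(x)^{p-1}\geq 0$ as $u\gneq 0$, the added term is non-negative, so $\tilde H u\geq Hu\geq 0$ on $V$. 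Thus $\tilde H u\geq 0$ follows from the corollary's hypothesis $Hu\geq 0$.

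Finally, the remaining premise $Lu\in\ell^1(V,m)$ is assumed outright in the corollary, so every hypothesis of Theorem~\ref{thm:mainresultc<0} is met, and the conclusion follows immediately. There is essentially no main obstacle here: the corollary is a specialisation, and the only computational point is the elementary inequality $\tilde H u\geq Hu$ for non-negative potential, together with the observation that the $c_-$-integrability condition is automatic. I would present the proof as a short verification rather than a fresh argument, noting explicitly that the multiplicative constant $C_p$ in $\tilde H$ only strengthens superharmonicity when $c\geq 0$, which is precisely why the more restrictive hypothesis $Hu\geq 0$ of the corollary suffices.
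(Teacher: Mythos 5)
Your proposal is correct and takes essentially the same route as the paper: the paper's proof is exactly the observation that $C_p>1$ gives $\tilde{H}u\geq Hu\geq 0$ when $c\geq 0$, so Theorem~\ref{thm:mainresultc<0} applies. Your additional remark that $c_-=0$ makes the condition $u\in\ell^{p-1}(V,c_-)$ vacuous is left implicit in the paper but is a worthwhile point to state.
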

\begin{proof}
	Note that $C_p> 1$ in Theorem~\ref{thm:mainresultc<0}. Hence, $\tilde{H}u\geq Hu\geq 0$, and we can apply Theorem~\ref{thm:mainresultc<0}.
\end{proof}

Note that some explicit examples are given in Section~\ref{examples}.

\begin{remark}[Green's functions]
	It is shown in \cite{F:AAP} that whenever $h$ is subcritical in $X$ there exists a Green's function $g_o\colon X\to (0,\infty)$ for every $o\in X$ with the property that $Hg_o=1_o$. Thus, $g_o$ is a strictly positive superharmonic function that is harmonic outside of a singleton. Hence, Green's functions are natural candidates for the function $u$ in Theorem~\ref{thm:mainresultc<0} and Corollary~\ref{cor:mainresultc>0}. 
	
	Also note that there exist graphs associated with subcritical energy functionals whose corresponding Green’s functions do not vanish at every boundary point: Take, for instants, the standard line graph $\NN_0$ from the introduction with $c=0$ and $m=1$. On $\NN$, this graph is subcritical, and a positive minimal Green's function $G_1$ on $1$ is given by $G_1=1$ on $\NN$ and $G_1(0)=0$. Note that $G_1$ is still of bounded oscillation but not proper. Moreover, since $LG^{(p-1)/p}_1=1_1$ on $\NN$, the corresponding formula does not result in an optimal $p$-Hardy weight on $\NN$. This shows that the properness seems to be a natural requirement. Another example of a tree with this property is given in \cite[p. 240]{Woe-Book}. 
\end{remark}

\begin{remark}[Adding positive potentials]
	As a consequence of Lemma~\ref{lem:opti}, we want to highlight the following: Let $b$ be locally finite on $X$ and $W\gneq 0$. If $h$ admits an optimal Hardy weight $w$, and $h-w$ has a proper Agmon ground state of bounded oscillation, then $w+ W$ is an optimal Hardy weight of $h+W$.
	
	For example, in order to apply Corollary~\ref{cor:mainresultc>0}, it is sufficient to find a suitable function $u$ when $c=\epsilon 1_o$ for some $o\in X$ and $\epsilon >0$; or if $h_{b,0,p}$ is already subcritical, then one can even set $c=0$, which might simplify computations.
\end{remark}

The third main result tells us how large a Hardy weight might be in a neighbourhood of infinity. The counterpart in the continuum is \cite[Theorem~3.1 and Theorem~3.2]{KP20}, and \cite[Theorem~7.9]{HPR21}. In the following the potential can again be arbitrary.

We set $\deg_W(x):=\sum_{y\in W}b(x,y)$ for $W\sse X$ and $x\in X$.

\begin{theorem}[Decay of Hardy weights]\label{thm:decay}
	Let $V\sse X$ be connected and non-empty, and $h$ be non-negative on $C_c(V)$. Let $W\sse V$ be a non-empty set, and $K\sse V$ be a finite non-empty set.
	\begin{enumerate}[label=(\alph*)]
	\item\label{thm:decayCritical} Assume that $h$ is critical in $V$ with Agmon ground state $u\in C(V)\cap \ell^p(V\setminus W, \deg_W)$ such that $\sup_{x\in W, y\in V\setminus W, x\sim y}u(x)/u(y)< \infty$. 	
	Then for any Hardy weight $w$ on $V\setminus W$, we have \[w\in \ell^{1}(V\setminus W, u^{p}),\] i.e., $u\in \ell^{p}(V\setminus W, w)$.	
\item\label{thm:decaySubcritical} Assume that $h$ is subcritical in $V$, and that there are only finitely many edges between $V\setminus K$ and $K$. Let $v\in \MM(V\setminus K)$. Then for any Hardy weight $w$ on $V$, we have \[w\in \ell^{1}(V, v^{p}).\] 
	\end{enumerate}
\end{theorem}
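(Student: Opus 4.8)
The plan is to prove both parts by the same core mechanism: exploiting that the Agmon ground state $u$ (respectively $v\in\MM(V\setminus K)$) is, up to a null-sequence argument, a distinguished positive (super)solution, and then testing the Hardy inequality along truncations of $u$ (resp. $v$) to force the summability $w\in\ell^1(\cdot,u^p)$. The guiding idea, transplanted from the continuum proofs in \cite{KP20,HPR21}, is that the ground state is (the limit of) a null sequence, so plugging it—or suitable cutoffs of it—into $h-w_p\ge 0$ controls $\sum w\,u^p$ by the kinetic energy, while the kinetic energy of the ground state is itself finite precisely because $u$ is harmonic.

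For part~\ref{thm:decayCritical}, I would proceed as follows. Since $h$ is critical in $V$, by the characterisation recalled after Lemma~\ref{lem:GreensFormula} (and the appendix promised in the introduction) there is an increasing null sequence $(\phi_n)\sse C_c(V)$ with $\phi_n\to u$ pointwise and $h(\phi_n)\to 0$; this is the discrete analogue of the null-sequence property of the ground state. For a Hardy weight $w$ on $V\setminus W$ we have $h(\phi_n)\ge (w_p)(\phi_n)=\sum_{x\in V\setminus W}\abs{\phi_n(x)}^p w(x)$. Letting $n\to\infty$, the left side tends to $0$ along the null sequence but—and this is the crux—the weight $w$ lives only on $V\setminus W$, whereas the failure of summability I must rule out would make the right side blow up. To make this quantitative I would instead test $h$ against the truncations $u\wedge t$ (or $(u-s)_+\wedge t$) supported where $u$ is large, use the ground state representation formula (cited as \cite{F:GSR}) to rewrite $h(u\wedge t)$ as a remainder term that is controlled by the boundary interaction across edges from $W$ to $V\setminus W$. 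Here the two hypotheses enter decisively: $u\in\ell^p(V\setminus W,\deg_W)$ bounds the contribution of edges leaving $W$, and the bounded-oscillation-type condition $\sup u(x)/u(y)<\infty$ over edges $x\in W,\ y\in V\setminus W$ ensures the comparison of $u$-values across the interface does not degenerate. Combining these, the energy of the cutoff stays bounded independently of the truncation level, and the Hardy inequality then yields $\sum_{x\in V\setminus W}u(x)^p w(x)<\infty$, i.e.\ $w\in\ell^1(V\setminus W,u^p)$.

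For part~\ref{thm:decaySubcritical}, $h$ is subcritical and $v\in\MM(V\setminus K)$ has minimal growth outside the finite set $K$. The strategy is to \emph{reduce to the critical case}: one passes to the subgraph on $V\setminus K$, where $v$ is $p$-harmonic with global minimal positive growth, hence plays the role of an Agmon ground state for the restricted functional. The finiteness of the edge set between $K$ and $V\setminus K$ is exactly the hypothesis that lets me localise: it guarantees that the boundary terms in Green's formula (Lemma~\ref{lem:GreensFormula}) across $\partial K$ involve only finitely many edges and are therefore finite, so that testing the Hardy inequality against truncations of $v$ produces a controlled remainder supported near $K$. Applying the mechanism of part~\ref{thm:decayCritical} with $W$ replaced by (a neighbourhood of) $K$ then gives $v\in\ell^p(V,w)$, that is, $w\in\ell^1(V,v^p)$.

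I expect the main obstacle to be the first part, specifically establishing the uniform (in the truncation level $t$) bound on the energy $h(u\wedge t)$ of the cutoffs. In the continuum this is handled by the chain rule and integration by parts; here, as the authors themselves flag, the chain rule is unavailable, so the energy of $u\wedge t$ cannot be split cleanly, and one must instead estimate $\abs{\nabla_{x,y}(u\wedge t)}^p$ against $\abs{\nabla_{x,y}u}^p$ edge by edge, using convexity-type inequalities (the discrete analogues of \cite{HPR21,PR15,PP,PT07} that the introduction announces will be proved in Section~\ref{sec:decay}) together with the ground state representation to absorb the genuinely nonlinear cross terms. Controlling the interface contribution through the combination of the $\ell^p(V\setminus W,\deg_W)$-hypothesis and the bounded-ratio condition—rather than a pointwise chain rule—will be the technically delicate point, and everything else (the reduction in part~\ref{thm:decaySubcritical}, the passage to the limit) should follow once that estimate is in hand.
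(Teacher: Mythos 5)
Your overall skeleton for part~\ref{thm:decayCritical} (null sequence converging to the ground state, ground state representation, interface control via the two hypotheses, Fatou) is the right one and matches the paper's strategy, but the concrete steps you propose fail. First, the inequality $h(\phi_n)\ge w_p(\phi_n)$ is not available: the Hardy inequality for $w$ holds only on $C_c(V\setminus W)$, and the null sequence does not vanish on $W$. (Sanity check: if that inequality did hold, then $h(\phi_n)\to 0$ and Fatou would give $w_p(u1_{V\setminus W})=0$, forcing $w\equiv 0$ on $V\setminus W$ since $u>0$ — absurd for a Hardy weight.) Your proposed repair via range truncations $u\wedge t$ or $(u-s)_+\wedge t$ does not produce admissible test functions either: the Agmon ground state is strictly positive on $V$, so $u\wedge t$ has full support, and no properness of $u$ is assumed, so $\set{u>s}$ need not be finite; these truncations are neither compactly supported nor supported in $V\setminus W$, so neither the Hardy inequality nor the ground state representation (as stated, for $\phi\in C_c(V)$) applies to them. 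The missing idea is to cut off \emph{spatially}, not in the range: take an \emph{increasing} null sequence $(e_n)$ with $e_n\le u$ (this is exactly what Appendix~\ref{app} provides) and test with $e_n1_{V\setminus W}\in C_c(V\setminus W)$. Then $w_p(e_n1_{V\setminus W})\le h(e_n1_{V\setminus W})\asymp h_u(e_n1_{V\setminus W}/u)$, and the simplified energy splits into an interior sum over $V\setminus W$, bounded by $h_u(e_n/u)\asymp h(e_n)\to 0$, plus an interface sum over edges between $W$ and $V\setminus W$, which the bounded-ratio hypothesis collapses to $C\sum_{y\in V\setminus W}e_n^p(y)\deg_W(y)\le C\sum_{y\in V\setminus W}u^p(y)\deg_W(y)<\infty$ by the hypothesis $u\in\ell^p(V\setminus W,\deg_W)$; Fatou then finishes.

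Part~\ref{thm:decaySubcritical} has a more basic gap: your reduction ``pass to the subgraph on $V\setminus K$, where $v$ plays the role of an Agmon ground state for the restricted functional'' cannot work, because the restriction of $h$ to $C_c(V\setminus K)$ is \emph{subcritical} (Lemma~\ref{lem:exHW}~\ref{lem:exHW1}), hence has no Agmon ground state at all; a function of minimal growth outside $K$ is not a ground state of the restricted functional, and part~\ref{thm:decayCritical} simply does not apply on the subgraph. The correct reduction is a potential perturbation, not a domain restriction: by Corollary~\ref{cor:PT07Prop44} there is $\ww\in C_c(K)$ such that $h-\ww_p$ is critical in all of $V$; its Agmon ground state $u$ lies in $\MM(V)\sse\MM(V\setminus K)$, so $u\asymp v$ on $V\setminus K$ by minimality; and since $(h-\ww_p)(\phi)=h(\phi)\ge w_p(\phi)$ for $\phi\in C_c(V\setminus K)$, the weight $w$ remains a Hardy weight for the critical functional on $V\setminus K$. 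Applying part~\ref{thm:decayCritical} to $h-\ww_p$ with $W=K$ — the finitely-many-edges hypothesis is what verifies its technical assumptions ($u\in\ell^p(V\setminus K,\deg_K)$ and the bounded edge ratios), not boundary terms in Green's formula — yields $w\in\ell^1(V\setminus K,u^p)=\ell^1(V\setminus K,v^p)=\ell^1(V,v^p)$.
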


The existence of a Hardy weight on $X\setminus W$ is ensured by Lemma~\ref{lem:exHW}. Moreover, the technical assumptions in the preamble of \ref{thm:decayCritical} are fulfilled if the subgraph $(V,b|_{V\times V})$ is locally finite on finite $W$. Also note that for any finite $K\sse X$, $\deg_K$ is a finite measure on $X$ by the local summability condition on $b$.

	The definition of optimality evolved historically, see \cite{DFP, DP16}. It is natural to ask whether there is a connection between optimality near infinity and null-criticality. In the continuum, it is shown in \cite[Corollary~3.4]{KP20}, that indeed, \ref{def:optimalNC} implies \ref{def:optimalNI}. Moreover, \cite[Remark~1.3]{DP16} states an example that the other implication (\ref{def:optimalNI} $\implies$ \ref{def:optimalNC}) fails in general.
	
	On graphs associated with linear Schrödinger operators with compactly supported potential part it is shown in \cite{KePiPo2}, that \ref{def:optimalNC} implies \ref{def:optimalNI} for a special Hardy weight.

Here, we will show in the following theorem, Theorem~\ref{thm:decayImpliesOpti}, that \ref{def:optimalNC} implies \ref{def:optimalNI} for all $p>1$, and all possible potentials if the graph is locally finite and the ground state is smooth enough. It can be interpreted as a corollary of Theorem~\ref{thm:decay}, and it includes the case in \cite{KePiPo2} also for $p= 2$. See Subsection~\ref{sec:ncopti} for a proof.

\begin{theorem}[Null-criticality implies optimality near infinity]\label{thm:decayImpliesOpti}
Let $V\sse X$ be connected and non-empty. Let $h-w_p$ be null-critical with respect to $w\gneq 0$ in $V$ with  Agmon ground state $u$ in $C(V)\cap \ell^p(V, b(x,\cdot))$ for all $x\in V$, and of bounded oscillation in $V$. Then $w$ is optimal near infinity for $h$.
\end{theorem}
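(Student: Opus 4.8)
The plan is to argue by contradiction and to invoke Theorem~\ref{thm:decay}\ref{thm:decayCritical} for the shifted functional $\hat h := h - w_p$. Suppose that $w$ is \emph{not} optimal near infinity for $h$. By Definition~\ref{def:optimal}\ref{def:optimalNI} there are $\lambda > 0$ and a finite set $K \sse V$ with
\[ h - w_p \geq \lambda w_p \qquad \text{on } C_c(V\setminus K). \]
Since enlarging $K$ only shrinks the test space $C_c(V\setminus K)$ and thus preserves the inequality, we may assume $K \neq \emptyset$. Reading the inequality as $\hat h \geq (\lambda w)_p$ on $C_c(V\setminus K)$, the positive function $\lambda w$ is a Hardy weight for $\hat h$ on $V\setminus K$.

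Next I would set $W := K$ and verify that $\hat h$ satisfies the hypotheses of Theorem~\ref{thm:decay}\ref{thm:decayCritical} on $V$. Null-criticality refines criticality, so $\hat h$ is critical in $V$ with Agmon ground state $u$. The bounded-oscillation assumption on $u$ at once yields $\sup_{x\in K,\, y\in V\setminus K,\, x\sim y} u(x)/u(y) < \infty$. For the summability $u \in \ell^p(V\setminus K, \deg_K)$ I would combine the hypothesis $u \in \ell^p(V, b(x,\cdot))$ for every $x \in V$ with the finiteness of $K$: by symmetry of $b$,
\[ \sum_{y\in V\setminus K} \abs{u(y)}^p \deg_K(y) = \sum_{x\in K}\sum_{y\in V\setminus K}\abs{u(y)}^p b(x,y) \leq \sum_{x\in K}\norm{u}_{p,b(x,\cdot)}^p < \infty . \]
Hence all hypotheses of the decay theorem hold for $\hat h$.

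Applying Theorem~\ref{thm:decay}\ref{thm:decayCritical} to the Hardy weight $\lambda w$ on $V\setminus K$ then gives $\lambda w \in \ell^1(V\setminus K, u^p)$, that is, $u \in \ell^p(V\setminus K, w)$. Because $K$ is finite and $u \in C(V)$, the omitted contribution $\sum_{x\in K}\abs{u(x)}^p w(x)$ is finite, so in fact $u \in \ell^p(V, w)$. This contradicts the null-criticality of $h - w_p$ with respect to $w$, which states precisely that the Agmon ground state $u$ of $\hat h$ does \emph{not} lie in $\ell^p(V, w)$. Therefore no such $\lambda$ and $K$ exist, and $w$ is optimal near infinity for $h$.

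I expect the only genuinely delicate step to be upgrading the edgewise summability $u \in \ell^p(V, b(x,\cdot))$ to $\deg_K$-summability; this is exactly where the finiteness of $K$ is indispensable, and it explains why the hypothesis is phrased per vertex rather than through a single global degree measure. The remainder is bookkeeping: recognising the failure of optimality near infinity as the existence of a Hardy weight for $\hat h$ near infinity, and matching the conclusion of the decay theorem against the definition of null-criticality.
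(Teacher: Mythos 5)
Your proposal is correct and follows essentially the same route as the paper: negate optimality near infinity to obtain a finite $K$ and $\lambda>0$ with $h-w_p\geq \lambda w_p$ on $C_c(V\setminus K)$, view $\lambda w$ as a Hardy weight for the critical functional $h-w_p$, apply Theorem~\ref{thm:decay}\ref{thm:decayCritical} with $W=K$, and contradict null-criticality. The only difference is that you spell out the verification of the hypotheses of Theorem~\ref{thm:decay}\ref{thm:decayCritical} (the $\deg_K$-summability via symmetry of $b$ and finiteness of $K$, and the finite contribution of $K$ itself), which the paper's proof leaves implicit under ``by the assumptions''.
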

Note that the technical assumptions minimise if the subgraph $(V,b|_{V\times V})$ is locally finite. Moreover, $C(V)\cap \ell^p(V, b(x,\cdot))= C(V)\cap F_{b,p+1}(V)\sse C(V)\cap F_{b,p}(V)$, see e.g. \cite[Lemma~4.1]{MS23} for the last inclusion.

\section{Decay of  Hardy Weights}\label{sec:decay}
Here we will prove Theorem~\ref{thm:decay} and Theorem~\ref{thm:decayImpliesOpti}. Both proofs use certain preliminary results. These are shown in the next two subsections. We start with the ground state representation formula which is also of fundamental importance in the proof of the other main results. Thereafter, we turn to convexity-type results. 

\subsection{Ground State Representation}
Here we recall a recently developed equivalence between the $p$-energy functional and the so-called simplified energies, see \cite{F:GSR} for details.

Let $p>1$ and $0\leq u\in \FF(V)$, $V\sse X$. The \emph{simplified energy (functional)} $h_{u}$ of $h$ with respect to $u$ on $C_c(V)$ is defined as
\begin{align*}
	h_{u}(\phi)&:=\sum_{x,y\in X}b(x,y) u(x)u(y)(\nabla_{x,y}\phi)^{2} \\
		&\qquad \cdot\left( \bigl(u(x)u(y)\bigr)^{1/2}\abs{\nabla_{x,y}\phi}+ \frac{\abs{\phi(x)}+ \abs{\phi(y)}}{2}\abs{\nabla_{x,y}u} \right)^{p-2},
\end{align*}
where we set $0\cdot \infty =0$ if $1<p<2$.

\begin{theorem}[Ground state representation, Theorem~3.1 in \cite{F:GSR}]\label{thm:GSR}
	Let $p> 1$, and $0\leq u\in \FF(V)$, $V\sse X$. Then, we have
	\begin{align}\label{eq:GSRI}
		 h(u\phi)- (muHu)_p(\phi)\asymp h_{u}(\phi), \qquad \phi\in C_c(V).
	\end{align}
	Furthermore, for $p=2$, the equivalence becomes an equality.
\end{theorem}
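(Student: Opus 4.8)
The plan is to reduce the two-sided bound to a single edge-wise (two-point) inequality and then prove that inequality by elementary but delicate convexity estimates. First I would rewrite the left-hand side using Green's formula. Since $\phi\in C_c(V)$ and $u\geq 0$, the function $u\abs{\phi}^p$ lies in $C_c(V)$, so Lemma~\ref{lem:GreensFormula} applies to $f=u\in\FF(V)$ with this test function and gives
\[(muHu)_p(\phi)=\ip{Hu}{u\abs{\phi}^p}_V=\tfrac12\sum_{x,y}b(x,y)\p{\nabla_{x,y}u}\,\nabla_{x,y}(u\abs{\phi}^p)+\sum_{x}c(x)\p{u(x)}\,u(x)\abs{\phi(x)}^p,\]
up to the $\partial V$-boundary term, with all sums converging absolutely. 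Because $u\geq 0$, the potential contribution equals $\sum_x c(x)u(x)^p\abs{\phi(x)}^p$, which is exactly the potential part of $h(u\phi)$ (as $\abs{u\phi}^p=u^p\abs{\phi}^p$); hence the two potential terms cancel. After the routine bookkeeping that matches the edges touching $\supp\phi$ against the boundary term (where the compact support of $\phi$ is essential), what remains is the purely kinetic identity
\[h(u\phi)-(muHu)_p(\phi)=\tfrac12\sum_{x,y}b(x,y)\,A_{x,y},\qquad A_{x,y}:=\abs{\nabla_{x,y}(u\phi)}^p-\p{\nabla_{x,y}u}\,\nabla_{x,y}(u\abs{\phi}^p).\]

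Thus the theorem is equivalent to the edge-wise equivalence $A_{x,y}\asymp B_{x,y}$ with constants depending only on $p$, where $B_{x,y}$ denotes the summand of $h_u$. Writing $\alpha=u(x)$, $\beta=u(y)$, $s=\phi(x)$, $t=\phi(y)$, this becomes the two-point statement
\[\abs{\alpha s-\beta t}^p-\p{\alpha-\beta}\bigl(\alpha\abs{s}^p-\beta\abs{t}^p\bigr)\;\asymp\;\alpha\beta(s-t)^2\Bigl((\alpha\beta)^{1/2}\abs{s-t}+\tfrac{\abs{s}+\abs{t}}{2}\abs{\alpha-\beta}\Bigr)^{p-2}\]
for all $\alpha,\beta\geq 0$ and $s,t\in\RR$. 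I would first record that both sides are jointly homogeneous of degree $p$ in $(\alpha,\beta)$ and of degree $p$ in $(s,t)$, and are symmetric under $(\alpha,s)\leftrightarrow(\beta,t)$; this lets me normalise (say $\max\{\alpha,\beta\}=1$ and $\abs{s}+\abs{t}=1$) and reduce the problem to a compact parameter set, and I would check that both sides vanish on exactly the same degeneracy set, namely where $s=t$ or $\alpha\beta=0$.

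For the inequality itself the case $p=2$ is an exact identity: the bracket carries exponent $0$, and expanding $\abs{\alpha s-\beta t}^2-(\alpha-\beta)(\alpha s^2-\beta t^2)$ telescopes to $\alpha\beta(s-t)^2$, giving equality. For general $p$ I would proceed by case analysis on the signs of $s,t$ and on the relative sizes of $\alpha,\beta$, representing the differences of $p$-th powers through the mean value theorem and the second-order convexity remainder $\abs{\xi}^p-\abs{\eta}^p-p\p{\eta}(\xi-\eta)\asymp(\xi-\eta)^2(\abs{\xi}+\abs{\eta})^{p-2}$, and matching each regime against the corresponding term inside the bracket of $B_{x,y}$.

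The main obstacle is precisely this last two-point equivalence, and within it the lower bound $A_{x,y}\gtrsim B_{x,y}$ in the degenerate regimes. The upper bound is comparatively soft, but the lower bound requires showing that the specific combination $(\alpha\beta)^{1/2}\abs{s-t}+\tfrac{\abs{s}+\abs{t}}{2}\abs{\alpha-\beta}$ inside the bracket is the correct one uniformly across all parameter regimes. The delicate points are the range $1<p<2$, where the exponent $p-2$ is negative and one must invoke the convention $0\cdot\infty=0$ together with a careful analysis near $s=t$ and near $\alpha\beta=0$, and the interplay of the two competing terms in the bracket when exactly one of $\alpha,\beta,\abs{s-t}$ degenerates relative to the others.
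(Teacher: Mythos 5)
The paper never proves Theorem~\ref{thm:GSR}: it is imported verbatim as Theorem~3.1 of \cite{F:GSR}, so the only ``proof'' in this paper is a citation. That said, your reduction is the correct one and mirrors the strategy of the cited work: apply Green's formula (Lemma~\ref{lem:GreensFormula}) to $f=u$ with the test function $u\abs{\phi}^p\in C_c(V)$, observe that the potential contributions cancel exactly because $\p{u(x)}u(x)=u(x)^p$ for $u\geq 0$, and match the edges leaving $\supp\phi$ against the boundary term of Green's formula, which leaves the purely kinetic edge-wise quantity $A_{x,y}$. Your structural checks (symmetry under $(\alpha,s)\leftrightarrow(\beta,t)$, homogeneity, the exact telescoping identity at $p=2$, and the common vanishing set $\{s=t\}\cup\{\alpha\beta=0\}$) are all correct, and since $A_{x,y}$ and the summand of $h_u$ are non-negative and symmetric, an edge-wise equivalence with constants depending only on $p$ would indeed sum to \eqref{eq:GSRI}.

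The genuine gap is that the two-point equivalence
\[
\abs{\alpha s-\beta t}^p-\p{\alpha-\beta}\bigl(\alpha\abs{s}^p-\beta\abs{t}^p\bigr)\;\asymp\;\alpha\beta(s-t)^2\Bigl((\alpha\beta)^{1/2}\abs{s-t}+\tfrac{\abs{s}+\abs{t}}{2}\abs{\alpha-\beta}\Bigr)^{p-2}
\]
is never actually proven: you state it, name a toolbox (case analysis plus the second-order convexity remainder $\abs{\xi}^p-\abs{\eta}^p-p\p{\eta}(\xi-\eta)\asymp(\xi-\eta)^2(\abs{\xi}+\abs{\eta})^{p-2}$), and then explicitly concede it is ``the main obstacle''. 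But this inequality \emph{is} the theorem; everything preceding it is soft rearrangement made legitimate by the absolute convergence in Green's formula. The hard content --- the lower bound in the regimes where one of $\abs{s-t}$, $\abs{\alpha-\beta}$, $\alpha\beta$ degenerates relative to the others, the competition between the two terms inside the bracket, and the range $1<p<2$ where the exponent $p-2$ is negative --- is exactly what occupies the bulk of the proof in \cite{F:GSR}, and ``matching each regime against the corresponding term inside the bracket'' names that task without carrying it out. As it stands, your proposal is a correct reduction of Theorem~\ref{thm:GSR} to the key two-point lemma of \cite{F:GSR}, not a proof of the theorem.
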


Of particular interest is the following special case of the ground state representation: Assume that $0\leq u\in \FF$ is strictly superharmonic, and set $w:= Hu/u^{p-1} \gneq 0$, then Theorem~\ref{thm:GSR} yields the following Hardy inequality,
\[		 h(\phi)\geq  w_p(\phi), \qquad \phi\in C_c(X).\]
In the general form of Theorem~\ref{thm:GSR}, the non-negativity of the left-hand side is also known as Picone's inequality, see \cite{F:AAP} for details.

Moreover, if $u\in \FF$ is harmonic in $X$, then we deduce from Theorem~\ref{thm:GSR} a second very useful equivalence:
\[ h(u\phi)\asymp h_{u}(\phi), \qquad \phi\in C_c(X).\]

From the inequalities in Theorem~\ref{thm:GSR}, we get as a consequence estimates between the energy associated with the Schrödinger operator and other functionals, which are usually also referred to as \emph{simplified energies} (see e.g. \cite{DP16, PTT08}). They all are called simplified, because they consist of non-negative terms only, and the difference operator $\nabla$ applies either to $u$ or $\phi$ but not to the product $u\cdot \phi$.

We set on $C_c(X)$
\begin{align*}
	h_{u,1}(\phi):= \sum_{x,y\in X}b(x,y) (u(x)u(y))^{p/2}\abs{\nabla_{x,y}\phi}^{p},
\end{align*}
and for $p\geq 2$, we define on $C_c(X)$
\[h_{u,2}(\phi):=\sum_{x,y\in X}b(x,y)u(x)u(y)\abs{\nabla_{x,y}u}^{p-2} \left(\frac{\abs{\phi(x)}+ \abs{\phi(y)}}{2}\right)^{p-2}\abs{\nabla_{x,y}\phi}^{2}.\]

\begin{corollary}[Corollary~3.2 in \cite{F:GSR}]\label{cor:GSR}
	If $1<p\leq 2$, then there is a positive constant $C_{p}$ such that for all $0\leq u\in \FF$
	\begin{align}\label{eq:GSRI_p<2}
		h(u\phi) - (muHu)_p(\phi)\leq C_p h_{u,1}(\phi),\quad \phi\in C_c(X).
	\end{align}
	and if $p\geq 2$, the reversed inequality in \eqref{eq:GSRI_p<2} holds true, i.e., 
	\begin{align}\label{eq:GSRI_p>2}
		h(u\phi) - (muHu)_p(\phi)\geq C_p h_{u,1}(\phi),\quad \phi\in C_c(X).
		\end{align}
 Furthermore, both inequalities become equalities if $p=2$.
	
	Moreover, if $p\geq 2$, we have for all $0\leq u\in \FF$ and $\phi\in C_c(X)$
	\begin{align}\label{eq:GSRI_p>2Triangle}
	h(u\phi)- (muHu)_p(\phi) \asymp h_{u,1}(\phi)+h_{u,2}(\phi), \qquad \phi\in C_c(X).
		\end{align}
\end{corollary}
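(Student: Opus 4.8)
The plan is to reduce all four assertions to elementary, edge-by-edge estimates and then invoke the ground state representation, Theorem~\ref{thm:GSR}. For a fixed pair $(x,y)$ I abbreviate
\[A:=\bigl(u(x)u(y)\bigr)^{1/2}\abs{\nabla_{x,y}\phi},\qquad B:=\frac{\abs{\phi(x)}+\abs{\phi(y)}}{2}\abs{\nabla_{x,y}u},\]
both non-negative. Then the $(x,y)$-summand of $h_{u}$ equals $b(x,y)\,A^{2}(A+B)^{p-2}$, the summand of $h_{u,1}$ equals $b(x,y)\,A^{p}$, and (for $p\geq 2$) the summand of $h_{u,2}$ equals $b(x,y)\,A^{2}B^{p-2}$. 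Hence it suffices to compare $A^{2}(A+B)^{p-2}$ with $A^{p}$ and $A^{2}B^{p-2}$ pointwise in $A,B\geq 0$, sum over all edges, and combine with the equivalence $h(u\phi)-(muHu)_{p}(\phi)\asymp h_{u}(\phi)$ from Theorem~\ref{thm:GSR}.

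For $1<p<2$ the exponent $p-2$ is negative. First I would dispose of the degenerate case $A=0$: there the $h_{u}$-summand is $0\cdot(\ldots)^{p-2}=0$ by the convention $0\cdot\infty=0$, matching $A^{p}=0$. For $A>0$ one has $A+B\geq A>0$, so $(A+B)^{p-2}\leq A^{p-2}$ because $t\mapsto t^{p-2}$ is decreasing, whence $A^{2}(A+B)^{p-2}\leq A^{p}$. Summing gives $h_{u}\leq h_{u,1}$, and combining with the upper bound of Theorem~\ref{thm:GSR} yields \eqref{eq:GSRI_p<2}.

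For $p\geq 2$ the exponent $p-2$ is non-negative. From $A+B\geq A\geq 0$ I get $(A+B)^{p-2}\geq A^{p-2}$, so $A^{2}(A+B)^{p-2}\geq A^{p}$; summation and the lower bound of Theorem~\ref{thm:GSR} give \eqref{eq:GSRI_p>2}. For the two-sided bound \eqref{eq:GSRI_p>2Triangle} I would invoke the elementary equivalence $(s+t)^{q}\asymp s^{q}+t^{q}$, valid for $q\geq 0$ and $s,t\geq 0$ with constants depending only on $q$: for $q\geq 1$ it follows from superadditivity together with convexity, $(s+t)^{q}\leq 2^{q-1}(s^{q}+t^{q})$, and for $0\leq q\leq 1$ from subadditivity together with $(s+t)^{q}\geq\max\{s,t\}^{q}\geq\tfrac12(s^{q}+t^{q})$. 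Taking $s=A$, $t=B$, $q=p-2$ gives $(A+B)^{p-2}\asymp A^{p-2}+B^{p-2}$; multiplying by $A^{2}$ yields $A^{2}(A+B)^{p-2}\asymp A^{p}+A^{2}B^{p-2}$, and summing produces $h_{u}\asymp h_{u,1}+h_{u,2}$, which combined with Theorem~\ref{thm:GSR} proves \eqref{eq:GSRI_p>2Triangle}. Finally, for $p=2$ we have $p-2=0$, so $(A+B)^{p-2}=1$ and both $h_{u}$ and $h_{u,1}$ reduce to $\sum b(x,y)A^{2}$; since Theorem~\ref{thm:GSR} is an equality at $p=2$, the inequalities \eqref{eq:GSRI_p<2} and \eqref{eq:GSRI_p>2} become equalities.

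The only delicate point, and the part requiring care rather than new ideas, is the bookkeeping of the degenerate configurations when $1<p<2$: the edges with $A=0$, where the factor $(A+B)^{p-2}$ equals $+\infty$ exactly when also $B=0$, and the consistent use of the convention $0\cdot\infty=0$ throughout. Everything else is a term-by-term application of the monotonicity or the convexity of power functions, followed by a single appeal to the ground state representation.
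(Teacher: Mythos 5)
Your proof is correct: the reduction to the edge-wise quantities $A$ and $B$, the monotonicity of $t\mapsto t^{p-2}$ in the two regimes, the equivalence $(s+t)^{q}\asymp s^{q}+t^{q}$ for $q=p-2\geq 0$, and the careful treatment of the $0\cdot\infty$ convention for $1<p<2$ are all sound, and a single appeal to Theorem~\ref{thm:GSR} then yields all four assertions. The paper itself states this result by citation to \cite{F:GSR} rather than proving it, but your term-by-term comparison with the summands of $h_{u}$ is exactly the argument that the framing as a corollary of the ground state representation intends, so this matches the intended proof.
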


The statements above do not include the case $p=1$. This is because a quantification of the strict convexity of the mapping $x\mapsto \abs{x}^p$, $p> 1$, is used in the proofs.

To get optimal Hardy weights, we will use the following observation frequently: Set on $C_c(X)$
\[h_{u,3}(\phi):=\sum_{x,y\in X}b(x,y) \left(\frac{\abs{\phi(x)}+ \abs{\phi(y)}}{2}\right)^{p}\abs{\nabla_{x,y}u}^{p}.\]
Then, applying Theorem~\ref{thm:GSR}, Corollary~\ref{cor:GSR}, and Hölder's inequality with $p'=p/2$ and $q'=p/(p-2)$, we get the existence of a positive constant $C_p$ such that for all $0\leq u\in \FF$,
\begin{align}\label{eq:GSRHoelder}
	h_{u}(\phi)\leq C_p\cdot \begin{cases}h_{u,1}(\phi), &\qquad 1< p\leq 2, \\ h_{u,1}(\phi) + \bigl(\frac{h_{u,1}(\phi)}{h_{u,3}(\phi)}\bigr)^{2/p}h_{u,3}(\phi), &\qquad p>2.\end{cases}
\end{align}

Moreover, we will often employ some characterisations of criticality from \cite{F:GSR,F:AAP}, which are stated next.

A sequence $(e_n)$ in $C_c(V)$, $V\sse X$, of non-negative functions is called \emph{null-sequence} in $V$ if there exists $o\in V$ and $\alpha>0$ such that $e_n(o)=\alpha$ and $h(e_n)\to 0$.

\begin{theorem}
\label{thm:charCriti}
Let $h$ be non-negative on $C_c(V)$, where $V\sse X$ is connected and non-empty. Then the following statements are equivalent:
\begin{enumerate}[label=(\roman*)]
		\item\label{thm:critical1} $h$ is critical in $V$.
		\item\label{thm:critical2} For any $o\in V$ and $\alpha>0$ there is a null-sequence $(e_n)$ in $V$ such that $e_n(o)=\alpha$, $n\in\NN$.
		\item\label{thm:critical4Neu} For any positive superharmonic function $u\in \FF(V)$ in $V$ and any null-sequence $(e_n)$ in $V$ there exists a positive constant $C$ such that 
		$e_n(x)\to C\, u(x)$ for all $x\in V$ as $n\to \infty$.
		\item\label{thm:critical22} There exists a unique positive superharmonic function in $V$ (up to linear dependence) and this function is strictly positive and harmonic in $V$, the so-called Agmon ground state.
	\end{enumerate}
\end{theorem}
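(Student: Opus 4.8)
The plan is to run the cyclic chain \ref{thm:critical1}$\Rightarrow$\ref{thm:critical2}$\Rightarrow$\ref{thm:critical4Neu}$\Rightarrow$\ref{thm:critical22}$\Rightarrow$\ref{thm:critical1}, with the ground state representation (Theorem~\ref{thm:GSR}) as the main engine. Two external inputs are needed: the Allegretto--Piepenbrink principle from \cite{F:AAP} (non-negativity of $h$ on $C_c(V)$ produces at least one positive superharmonic $u\in\FF(V)$), and the existence of a Green's function under subcriticality, also from \cite{F:AAP}. Throughout I use that positive superharmonic functions are strictly positive by the Harnack inequality of \cite{F:GSR}. As a consistency check, note that \ref{thm:critical1}$\Leftrightarrow$\ref{thm:critical22} is essentially already recorded in the prose preceding the statement.

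For \ref{thm:critical1}$\Rightarrow$\ref{thm:critical2} I would introduce the capacity $\Lambda(o):=\inf\{h(\phi):\phi\in C_c(V),\ \phi(o)=1\}$. If $\Lambda(o)>0$ for some $o$, then $h(\phi)\geq\Lambda(o)\abs{\phi(o)}^p=(\Lambda(o)1_o)_p(\phi)$ exhibits the Hardy weight $\Lambda(o)1_o\gneq0$, so $h$ is subcritical; hence criticality forces $\Lambda(o)=0$ for every $o\in V$. Fixing $o$ and $\alpha>0$, a minimizing sequence $\psi_k$ with $\psi_k(o)=1$ and $h(\psi_k)\to0$ yields the required null-sequence after passing to $\abs{\psi_k}$ (which does not increase $h$, since $\abs{\nabla_{x,y}\abs{\psi_k}}\leq\abs{\nabla_{x,y}\psi_k}$ and the potential term is unchanged) and rescaling by $\alpha$.

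The heart of the argument is \ref{thm:critical2}$\Rightarrow$\ref{thm:critical4Neu}. Given a positive superharmonic $u$ and a null-sequence $(e_n)$, I set $\phi_n:=e_n/u\in C_c(V)$. Theorem~\ref{thm:GSR} gives
\[ 0\leq (muHu)_p(\phi_n)+C^{-1}h_u(\phi_n)\leq h(u\phi_n)=h(e_n)\to0, \]
so both non-negative terms vanish. From $h_u(\phi_n)\to0$ I want $\nabla_{x,y}\phi_n\to0$ on every edge $x\sim y$; for $p\geq2$ this is immediate by comparison with $h_{u,1}$ via Corollary~\ref{cor:GSR}. The step I expect to be the main obstacle is the range $1<p<2$: the negative exponent $p-2$ in $h_u$ means a single-edge term controls $(\nabla_{x,y}\phi_n)^2$ only after a local uniform bound on $\phi_n$ is in hand (otherwise a growing $\phi_n$ shrinks the weight). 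I plan to secure such a bound from the energy control together with the anchoring $\phi_n(o)=\alpha/u(o)$, and then conclude $\nabla_{x,y}\phi_n\to0$. Connectedness of $V$ then propagates the fixed value at $o$ along paths, so $\phi_n(x)\to C:=\alpha/u(o)>0$, i.e.\ $e_n=u\phi_n\to Cu$ pointwise.

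For \ref{thm:critical4Neu}$\Rightarrow$\ref{thm:critical22} I would reuse the displayed inequality: since $(muHu)_p(\phi_n)=\sum_x m(x)u(x)Hu(x)\abs{\phi_n(x)}^p\to0$ with non-negative summands and $\phi_n\to C>0$, Fatou's lemma forces $m(x)u(x)Hu(x)=0$, whence $Hu=0$ on $V$; thus every positive superharmonic function is harmonic. Uniqueness up to scaling follows because a single null-sequence converges simultaneously to $C_1u_1$ and $C_2u_2$ for any two positive superharmonic $u_1,u_2$, so $u_1,u_2$ are proportional; together with the existence furnished by Allegretto--Piepenbrink this is \ref{thm:critical22}. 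Finally, \ref{thm:critical22}$\Rightarrow$\ref{thm:critical1} is the contrapositive: were $h$ subcritical, a Green's function $g_o$ with $Hg_o=1_o\gneq0$ would be a positive superharmonic function that is not harmonic, contradicting the uniqueness-and-harmonicity in \ref{thm:critical22}. The convergence assertion in \ref{thm:critical4Neu} is operative precisely in the critical regime, where the null-sequences it refers to are supplied by \ref{thm:critical2}; this is the reading under which the cycle closes.
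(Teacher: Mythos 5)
Your proposal is correct in substance, but it is a genuinely different route from the paper's, because the paper offers no argument at all: its proof consists of combining three cited results -- \cite[Theorem~2.6]{F:AAP}, \cite[Theorem~4.1]{F:GSR}, and the Agmon--Allegretto--Piepenbrink theorem \cite[Theorem~2.3]{F:AAP} -- together with the remark that the cited proofs, given for $V=X$, carry over to connected $V\subsetneq X$. You instead reconstruct the content of those citations: the capacity argument for \ref{thm:critical1}$\Rightarrow$\ref{thm:critical2} (your reduction $h(\abs{\phi})\leq h(\phi)$ is the same device the paper uses in the proof of Corollary~\ref{cor:PT07Prop45}), the GSR-driven edge-propagation for the convergence of null-sequences (this is in essence the mechanism behind \cite[Theorem~4.1]{F:GSR}), Fatou for harmonicity, and the Green's function for the contrapositive \ref{thm:critical22}$\Rightarrow$\ref{thm:critical1}. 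The step you flag for $1<p<2$ closes exactly as you plan: on a single edge whose endpoint nearer the anchor is bounded by $M$, the $h_u$-term dominates a constant times $d_n^{2}(1+d_n)^{p-2}$ with $d_n=\abs{\nabla_{x,y}\phi_n}$, and this quantity vanishes only when $d_n\to 0$; induction along paths in the connected set $V$ then gives both the local bound and the propagation.

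Your closing observation is also a genuine one, not a dodge: as literally stated, \ref{thm:critical4Neu} is vacuously true whenever $h$ is subcritical (Lemma~\ref{lem:w>0} yields a strictly positive Hardy weight $w$, so $h(e_n)\geq w(o)\alpha^{p}>0$ precludes null-sequences), so \ref{thm:critical4Neu}$\Rightarrow$\ref{thm:critical22} requires reading the existence of null-sequences into \ref{thm:critical4Neu}, which is the formulation the cited source supplies; relatedly, your proof of \ref{thm:critical2}$\Rightarrow$\ref{thm:critical4Neu} never actually uses \ref{thm:critical2}, which is harmless in a cycle. What each approach buys: the paper's citation is short and inherits the careful statements of \cite{F:AAP, F:GSR}, while your reconstruction is self-contained, makes the $V\subsetneq X$ adaptation concrete, and usefully exposes the imprecision in the theorem's own wording.
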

\begin{proof}
Combine \cite[Theorem~2.6]{F:AAP} with \cite[Theorem~4.1]{F:GSR} and the Agmon-Allegretto-Piepenbrink theorem (\cite[Theorem~2.3]{F:AAP}). This shows in fact the statement for $V=X$. However, it is not difficult to redo the proof for connected non-empty $V\subsetneq X$.
\end{proof}

Furthermore, we will need that the null-sequence in Theorem~\ref{thm:charCriti} can be chosen to be \emph{increasing}. This is shown in Appendix~\ref{app}. 

The following result is proven in \cite[Corollary~5.6]{F:AAP}.

\begin{lemma}\label{lem:w>0}
	Let $V\sse X$ be connected. If $h$ is subcritical in $V$, then there is a strictly positive Hardy weight $w$ on $V$.
\end{lemma}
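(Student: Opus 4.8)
The plan is to manufacture a strictly positive Hardy weight out of countably many ``one-point'' Hardy inequalities. The crux is the following single-vertex estimate, which is nothing but positivity of the $p$-capacity of each vertex: for every $x\in V$ there is a constant $\lambda_x>0$ such that
\[ h(\phi)\ge \lambda_x \abs{\phi(x)}^p, \qquad \phi\in C_c(V), \]
that is, $\lambda_x 1_x$ is already a Hardy weight. Once this is available for every $x$, summing over $V$ will immediately produce a strictly positive weight.

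First I would establish the single-vertex estimate by contradiction, using the null-sequence characterisation of criticality. Fix $x\in V$. By the $p$-homogeneity of $h$, the failure of the estimate for every $\lambda_x>0$ is equivalent to the existence of $\phi_n\in C_c(V)$ with $\abs{\phi_n(x)}=1$ and $h(\phi_n)\to 0$. Setting $e_n:=\abs{\phi_n}\ge 0$ keeps $e_n(x)=1$, and since $\bigl|\,\abs{\phi(x)}-\abs{\phi(y)}\,\bigr|\le\abs{\nabla_{x,y}\phi}$ while the potential term is unchanged, one has $h(e_n)\le h(\phi_n)\to 0$. Thus $(e_n)$ is a non-negative null-sequence based at $x$. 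By Theorem~\ref{thm:charCriti} (on connected $V$ a null-sequence based at one vertex can be renormalised, via the positivity of the limiting harmonic function, to one based at any prescribed vertex, so that the existence of a null-sequence is equivalent to criticality) this forces $h$ to be critical, contradicting subcriticality. Hence $\lambda_x:=\inf\set{h(\phi):\phi\in C_c(V),\ \abs{\phi(x)}=1}>0$.

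Second I would assemble the weight. Since $X$, and hence $V$, is countable, enumerate $V=\set{x_1,x_2,\dots}$ and define
\[ w:=\sum_{k\ge 1} 2^{-k}\lambda_{x_k} 1_{x_k}, \]
which is strictly positive on $V$. For any fixed $\phi\in C_c(V)$ the support is finite, $h(\phi)\ge 0$, and $\sum_{k}2^{-k}=1$, so the single-vertex bounds may be added termwise:
\[ h(\phi)=\sum_{k\ge 1}2^{-k}h(\phi)\ \ge\ \sum_{k\ge 1}2^{-k}\lambda_{x_k}\abs{\phi(x_k)}^p=w_p(\phi),\qquad \phi\in C_c(V). \]
Therefore $w$ is a strictly positive Hardy weight, as claimed. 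The main obstacle is the single-vertex estimate; its delicate points are the passage to $\abs{\phi_n}$ (which preserves the normalisation while not increasing $h$) and the clean invocation of Theorem~\ref{thm:charCriti}, cf.\ \cite{F:AAP}. The final summation, by contrast, is routine.
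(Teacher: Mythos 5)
The paper itself does not prove this lemma --- it simply cites \cite[Corollary~5.6]{F:AAP} --- so there is no in-paper argument to compare against; judged on its own terms, your skeleton (a one-point capacity bound $h(\phi)\ge\lambda_x\abs{\phi(x)}^p$ at every vertex, followed by the dyadic summation $w=\sum_k 2^{-k}\lambda_{x_k}1_{x_k}$) is the natural and standard route, and the normalisation, the passage to $\abs{\phi_n}$ via the reversed triangle inequality, and the final summation (which correctly uses $h\ge 0$ on $C_c(V)$) are all fine. The problem is the justification of the one-point bound, which is where the entire difficulty of the lemma lives.

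Concretely: Theorem~\ref{thm:charCriti}~(ii) characterises criticality by the existence of null-sequences based at \emph{every} vertex of $V$, whereas your contradiction argument produces a null-sequence based at the \emph{single} vertex $x$. Your parenthetical bridge --- renormalising at any other vertex ``via the positivity of the limiting harmonic function'' --- is circular, because the existence of a positive pointwise limit of a null-sequence is statement~(iii) of the same theorem, available only once criticality is already established, i.e.\ only once the fact you are trying to prove is in hand. What is actually needed is a Harnack-type statement for null-sequences: if $e_n(x)=1$ and $h(e_n)\to 0$, then $\liminf_n e_n(o)>0$ for every $o\in V$. This is true, but it requires a genuine argument, for instance: by non-negativity of $h$ on $C_c(V)$ and the Agmon--Allegretto--Piepenbrink theorem (cited in the proof of Theorem~\ref{thm:charCriti}) there is a positive superharmonic $u\in\FF(V)$; the ground state representation (Theorem~\ref{thm:GSR}) yields $h(e_n)\ge C^{-1}h_u(e_n/u)+(muHu)_p(e_n/u)\ge 0$, so each non-negative edge term of $h_u(e_n/u)$ tends to $0$; since $(e_n/u)(x)=1/u(x)$ is fixed, an induction along paths in the connected set $V$ gives $\nabla_{y,z}(e_n/u)\to 0$ on every edge, hence $e_n\to u/u(x)>0$ pointwise; Fatou's lemma then gives $0=\lim_n h(e_n)\ge\liminf_n w_p(e_n)\ge w(y)\bigl(u(y)/u(x)\bigr)^p>0$ for any $y$ with $w(y)>0$, where $w\gneq 0$ is a Hardy weight witnessing subcriticality --- the desired contradiction. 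With such an argument inserted your proof closes; as written, the crucial step is asserted rather than proved.
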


\subsection{Convexity-type Results}
First, we show a Poincar\'{e}-type inequality. This is the discrete counterpart of \cite[Theorem~1.6.4]{PT07}, see also \cite{HPR21, PR15, PP}.

\begin{lemma}[Poincar\'{e} inequality]\label{lem:PT07_164}
	Let $V\sse X$ be connected and non-empty. Assume that $h$ is critical in $V$ with Agmon ground state $u$. Then, there exists a strictly positive function $w$ on $V$ such that for every $\psi\in C_c(V)$ with $\ip{u}{\psi}_{V}\neq 0$ there exists a positive constant $C=C(\psi)$ such that 
	\[C^{-1}w_p(\phi)\leq h(\phi)+ C\abs{\ip{\phi}{\psi}_{V}}^{p}, \qquad \phi\in C_c(V).\]
\end{lemma}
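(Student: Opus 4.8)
The plan is to argue by contradiction, exploiting the characterisation of criticality from Theorem~\ref{thm:charCriti}, specifically the fact that any null-sequence converges (after normalisation) to a multiple of the Agmon ground state $u$ (statement \ref{thm:critical4Neu}). First I would fix a strictly positive Hardy-type weight $w$ on $V$; the natural candidate is the weight whose existence is guaranteed by the criticality characterisation together with the ground state representation, but since $h$ is critical in $V$ no genuine Hardy weight exists, so $w$ must instead be produced as a weight for which $h + C|\langle\cdot,\psi\rangle_V|^p$ dominates $w_p$. Concretely, I expect $w$ to come from the subcritical functional obtained by restricting/perturbing $h$, or more robustly to be chosen as $w = \min(1, \text{something proper})$ supported suitably; the key point is only that $w$ is strictly positive and independent of $\psi$.

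The core of the argument is the following negation. Suppose the claimed inequality fails for a given $\psi$ with $\langle u,\psi\rangle_V \neq 0$. Then for every $n\in\NN$ there is $\phi_n\in C_c(V)$ with
\[
w_p(\phi_n) > n\left( h(\phi_n) + n\,\abs{\ip{\phi_n}{\psi}_{V}}^{p}\right).
\]
By homogeneity I normalise so that $w_p(\phi_n)=1$ for all $n$. Dividing through, this forces simultaneously
\[
h(\phi_n)\to 0 \qquad\text{and}\qquad \ip{\phi_n}{\psi}_{V}\to 0.
\]
The relation $h(\phi_n)\to 0$ together with the normalisation $w_p(\phi_n)=1$ says precisely that $(|\phi_n|)$ behaves like a null-sequence: after passing to $(|\phi_n|)$ and using that $h(|\phi_n|)\le h(\phi_n)$, I can extract from Theorem~\ref{thm:charCriti}\ref{thm:critical4Neu} that $\phi_n \to C u$ pointwise on $V$ for some constant $C$, up to normalising at a base point $o\in V$ where I arrange $\phi_n(o)=\alpha>0$. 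The normalisation $w_p(\phi_n)=1$ with $w>0$ prevents the limit from being identically zero, so $C\neq 0$.

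The final step is to pass the two limits through the pairing to reach a contradiction. Pointwise convergence $\phi_n\to Cu$ on $V$ should yield $\ip{\phi_n}{\psi}_V \to C\ip{u}{\psi}_V$, because $\psi$ has compact support so the pairing $\ip{\cdot}{\psi}_V$ is a finite sum and convergence is only tested on the finite set $\supp\psi$. Combined with $\ip{\phi_n}{\psi}_V\to 0$ this gives $C\ip{u}{\psi}_V = 0$, contradicting $C\neq 0$ and $\ip{u}{\psi}_V\neq 0$. The main obstacle I anticipate is the justification that a normalised sequence with $h(\phi_n)\to0$ and $w_p(\phi_n)=1$ is genuinely a null-sequence in the sense required by Theorem~\ref{thm:charCriti}: one must ensure the values $\phi_n(o)$ stay bounded away from $0$ (so that the limiting constant $C$ is nonzero) rather than mass escaping to infinity in a way compatible with $w_p(\phi_n)=1$. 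This is exactly where the strict positivity of $w$ and the increasing/nonnegative null-sequence refinement (mentioned after Theorem~\ref{thm:charCriti} and proved in Appendix~\ref{app}) are needed: they let me replace $\phi_n$ by $|\phi_n|$, renormalise the base-point value, and secure the convergence to a nonzero multiple of $u$.
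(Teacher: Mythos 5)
Your contradiction scheme (negate, normalise, extract a null sequence, pass to the limit in the finitely supported pairing) is the same engine the paper uses, but there is a genuine gap exactly at the step you flag as the ``main obstacle'', and the fix you propose does not close it. If you normalise by $w_p(\phi_n)=1$ where $w$ is merely \emph{some} strictly positive function on the infinite set $V$, nothing prevents the mass of $\phi_n$ from escaping to infinity: $w_p$ is a sum over all of $V$, so one can have $w_p(\phi_n)=1$, $h(\phi_n)\to 0$, $\ip{\phi_n}{\psi}_{V}\to 0$ while $\phi_n\to 0$ pointwise (bumps supported further and further from every fixed finite set). Then no vertex $o$ with $\phi_{n_k}(o)\geq\epsilon$ along a subsequence exists, no null sequence in the sense of Theorem~\ref{thm:charCriti} can be extracted, and no contradiction arises. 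Neither the strict positivity of $w$ nor the increasing-null-sequence refinement of Appendix~\ref{app} helps: the latter concerns constructing null sequences for a critical functional, not controlling an arbitrary sequence normalised in $w_p$. Worse, your guiding principle that ``the key point is only that $w$ is strictly positive and independent of $\psi$'' is false: for $p=2$, $c=0$, $m\equiv 1$ on $\ZZ^2$ (critical, $u\equiv 1$) with $\psi=1_o$, the conclusion fails for $w\equiv 1$, since restricting to $\phi$ with $\phi(o)=0$ it would force $\sum_x\abs{\phi(x)}^2\leq C\,h(\phi)$ on $C_c(\ZZ^2\setminus\set{o})$, which is violated by wide tent functions supported far from $o$. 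So the weight must be \emph{constructed}, not postulated.

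The missing idea is the paper's two-step decomposition, which is designed precisely to avoid this trap. Step one: for a finite non-empty $K\sse V$, the functional $h+(m1_K)_p$ is subcritical (it dominates the Hardy weight $m1_K$), so Lemma~\ref{lem:w>0} yields a strictly positive $w$ with $w_p\leq h+(m1_K)_p$ on $C_c(V)$ --- this is where $w$ comes from. Step two: one proves $(m1_K)_p(\phi)\leq C\bigl(h(\phi)+\abs{\ip{\phi}{\psi}_{V}}^{p}\bigr)$ by your contradiction argument, but now the normalisation is $(m1_K)_p(\phi_n)\to\alpha>0$ on the \emph{finite} set $K$; finiteness is exactly what produces a vertex $o\in K$ and a subsequence with values bounded below by some $\epsilon>0$, so that $e_k:=\phi_{n_k}/\phi_{n_k}(o)$ is a genuine null sequence, converges to a positive multiple of $u$ by Theorem~\ref{thm:charCriti}, and contradicts $\ip{\phi_{n_k}}{\psi}_{V}\to 0$. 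Chaining the two inequalities gives the lemma; equivalently, once step one is in place, your own normalisation argument works because $1=w_p(\phi_n)\leq h(\phi_n)+(m1_K)_p(\phi_n)$ pins the mass to $K$. (A minor point your write-up shares with the paper's: making the sequence non-negative by passing to $\abs{\phi_{n_k}}$ costs the control on $\ip{\phi_{n_k}}{\psi}_{V}$, and repairing this needs an extra argument; but the essential gap in your proposal is the absence of the auxiliary subcritical functional $h+(m1_K)_p$ that transfers the normalisation to a finite set.)
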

\begin{proof}
	We divide the proof into two steps.
	
\emph{1. Claim: For all non-empty $W\sse V$ there exists a strictly positive function $w\in C(V)$ such that 
	\[w_p(\phi)\leq h(\phi)+ (m1_W)_p(\phi), \qquad \phi\in C_c(V).\]}
	Note that the right-hand side is a  $p$-energy functional which we denote by $\hh$. Then, clearly, $\hh$ is non-negative on $C_c(V)$. Moreover, $\hh \geq (m1_W)_p$ on $C_c(V)$, i.e., $\hh$ is subcritical (with $p$-Hardy weight $m1_W$). Since $\hh$ is subcritical in $V$ such a $w$ exists by Lemma~\ref{lem:w>0}. 	
	
\emph{2. Claim: Let $u$ be an Agmon ground state of the critical $p$-energy functional $h$ on $V$. For every $\psi\in C_c(V)$ with $\ip{u}{\psi}_{X}\neq 0$,  there exists a positive constant $C=C(\psi)$ such that for all finite and non-empty $K\sse V$, we have
	\[(m1_K)_p(\phi)\leq C\bigl( h(\phi)+ \abs{\ip{\phi}{\psi}_{V}}^{p} \bigr), \qquad \phi\in C_c(V).\]}
Assume that the inequality above is wrong. Then there is a sequence $(\phi_n)$ in $C_c(V)$ such that $h(\phi_n)\to 0$, $\ip{\phi_n}{\psi}_{V}\to 0$, but $(m1_K)_p(\phi_n)\to \alpha\in (0,\infty]$. Thus, since $K$ is finite and non-empty, there exists $o\in K$ and $\epsilon> 0$ such that for all $n_k\geq n_0$, $\phi_{n_k}(o)\geq  \epsilon$. Set $e_k=\phi_{n_k}/\phi_{n_k}(o)$ for all $n_k\geq n_0$. Then, $(e_k)$ is a null sequence of $h$ in $V$. Thus, $e_k\to \tilde{C}\, u$ by Theorem~\ref{thm:charCriti} for some positive constant $\tilde{C}$. Since $\ip{e_k}{\psi}_{V}\to \tilde{C}\ip{u}{\psi}_{V}\neq 0$, we have a contradiction, and the second claim is proven.

Now, applying both claims yields the result.
\end{proof}

We can prove the following convexity-type result. The counterpart in the continuum can be found in \cite[Proposition~4.3]{PT07}.
\begin{proposition}\label{prop:convexh}
	Let $c_0, c_1$ be two potentials such that $c_0\neq c_1$ on a connected and non-empty subset $V\sse X$. Denote the corresponding $p$-energy functionals by $h_0,h_1$, respectively. Furthermore, define for all $t\in [0,1]$
	\[h_t(\phi):= t h_1(\phi)+(1-t)h_0(\phi), \qquad \phi\in C_c(X).\]
	Let $h_i$ be non-negative on $C_c(V)$, $i=0,1$. Then $h_t$ is non-negative on $C_c(V)$ for all $t\in [0,1]$. Moreover, $h_t$ is subcritical in $V$ for all $t\in (0,1)$.
\end{proposition}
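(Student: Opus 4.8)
The plan is to dispatch the two assertions separately, the first being immediate and the second being the heart of the matter. For \textbf{non-negativity}, note that $h_t = t\,h_1 + (1-t)\,h_0$ is a convex combination of the functionals $h_0, h_1$, which are non-negative on $C_c(V)$ by hypothesis; since $t, 1-t \geq 0$, non-negativity of $h_t$ on $C_c(V)$ follows at once for every $t \in [0,1]$. The work lies entirely in proving \textbf{subcriticality} of $h_t$ for $t \in (0,1)$, which I would carry out by contradiction using the characterisations of criticality in Theorem~\ref{thm:charCriti}.

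So fix $t \in (0,1)$ and suppose, for contradiction, that $h_t$ is critical in $V$. By Theorem~\ref{thm:charCriti}\ref{thm:critical2} there is a null-sequence $(e_n)$ in $V$, i.e.\ non-negative functions with $e_n(o) = \alpha > 0$ for some fixed $o \in V$ and $h_t(e_n) \to 0$. The key observation is that $h_t(e_n) = t\,h_1(e_n) + (1-t)\,h_0(e_n)$ is a sum of two non-negative terms with strictly positive coefficients $t, 1-t$; since the sum tends to zero, each summand must tend to zero, whence
\[ h_0(e_n) \to 0 \qquad\text{and}\qquad h_1(e_n) \to 0. \]
Thus the \emph{same} sequence $(e_n)$ is a null-sequence for both $h_0$ and $h_1$. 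By Theorem~\ref{thm:charCriti} (equivalence of \ref{thm:critical2} with \ref{thm:critical1} and \ref{thm:critical22}), both $h_0$ and $h_1$ are therefore critical in $V$, each possessing a strictly positive harmonic Agmon ground state, say $u_0$ and $u_1$ respectively. Applying Theorem~\ref{thm:charCriti}\ref{thm:critical4Neu} to $h_0$ with its ground state $u_0$, and to $h_1$ with $u_1$, along the single sequence $(e_n)$ yields positive constants $C_0, C_1$ with $e_n \to C_0\,u_0$ and $e_n \to C_1\,u_1$ pointwise on $V$ (positivity of the constants follows from $e_n(o) = \alpha > 0$ together with $u_i(o) > 0$). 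Uniqueness of the pointwise limit forces $C_0\,u_0 = C_1\,u_1$, so $u_0 = \lambda u_1$ for some $\lambda > 0$.

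It remains to extract the contradiction from this proportionality, and here I would use the positive $(p-1)$-homogeneity of the Schrödinger operators: writing $H_i := H_{b,c_i,p,m}$, one checks from the definitions of $L$ and $\p{\cdot}$ that $H_i(\lambda f) = \lambda^{p-1} H_i f$ for $\lambda > 0$. Since $u_0 = \lambda u_1$ is harmonic for $H_0$, homogeneity gives $0 = H_0 u_0 = \lambda^{p-1} H_0 u_1$, hence $H_0 u_1 = 0$; as $u_1$ is also harmonic for $H_1$, subtracting the two equations leaves
\[ \frac{c_0(x) - c_1(x)}{m(x)}\,\p{u_1(x)} = 0, \qquad x \in V. \]
Because $u_1 > 0$ on $V$ (Harnack) and $m > 0$ on $V$, this forces $c_0 = c_1$ on all of $V$, contradicting the standing hypothesis $c_0 \neq c_1$ on $V$. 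Hence $h_t$ cannot be critical, and being non-negative it is subcritical for every $t \in (0,1)$. I expect the main obstacle to be the bookkeeping in the middle paragraph — justifying that one null-sequence serves all three functionals and that the pointwise limits genuinely coincide — since the homogeneity computation in the last step is routine once the ground states are known to be proportional; one should also confirm that $u_1 \in \FF(V)$ lies in the common formal space so that both $H_0 u_1$ and $H_1 u_1$ are well defined.
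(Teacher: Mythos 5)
Your proof is correct, and it reorganises the paper's argument in a way worth noting. The paper first disposes of the case where $h_0$ or $h_1$ is subcritical by exhibiting an explicit Hardy weight for $h_t$ (if $w_0$ is a Hardy weight for $h_0$, then $(1-t)w_0$ is one for $h_t$), and only in the remaining case, where both $h_0$ and $h_1$ are critical, does it run the null-sequence argument; moreover, its contradiction goes through the Agmon ground state $u_t$ of $h_t$, showing that $u_t$ cannot be proportional to $u_0$ or $u_1$ while the shared null-sequence converges to all three. You avoid the case split entirely: from the assumed criticality of $h_t$ you extract a null-sequence, observe that it is simultaneously a null-sequence for $h_0$ and $h_1$, conclude that both are critical, and then derive $c_0=c_1$ from the proportionality $u_0=\lambda u_1$ and the $(p-1)$-homogeneity of the operators --- never needing $u_t$ itself. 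The two contradiction computations are the same in essence (the paper's $c_1u_1^{p-1}=-mLu_1=-mLu_0=c_0u_1^{p-1}$ is exactly your subtraction step), but your version is more uniform and slightly shorter, at the cost of one extra inference the paper's case split lets it avoid.

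That inference is the one point you should tighten: you conclude criticality of $h_0$ and $h_1$ from the existence of a \emph{single} null-sequence by citing Theorem~\ref{thm:charCriti}, but statement~\ref{thm:critical2} there is quantified over all $o\in V$ and $\alpha>0$, so ``one null-sequence exists $\implies$ critical'' is not literally the implication \ref{thm:critical2}$\implies$\ref{thm:critical1}. The fix is one line via Lemma~\ref{lem:w>0}: if $h_0$ were subcritical in $V$, there would be a \emph{strictly} positive Hardy weight $w$ on $V$, whence $h_0(e_n)\geq w(o)\,e_n(o)^p=w(o)\alpha^p>0$ for all $n$, contradicting $h_0(e_n)\to 0$. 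Since $h_0$ is non-negative on $C_c(V)$, it must then be critical, and likewise $h_1$. With this line added (and your closing remark that the Agmon ground states lie in $\FF(V)$, which is independent of the potential, so the subtraction of $H_0u_1$ and $H_1u_1$ is legitimate), your argument is complete.
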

\begin{proof}
	The proof follows the ideas from its counterpart in the continuum. Here are the details: Clearly, $h_{t}$ is non-negative on $C_c(V)$ for all $t\in [0,1]$. 
	
	If $h_0$ is subcritical in $V$ with Hardy weight $w_0$, then, $h_t$, $t\in [0,1)$, is subcritical in $V$ with Hardy weight $(1-t)w_0$. Analogously, we can argue if $h_1$ is subcritical in $V$. 
	
	It remains the case that both $h_0$ and $h_1$ are critical in $V$. Denote by $u_0$ and $u_1$ the corresponding Agmon ground states normalised at some $o\in V$. Assume that for some $t\in (0,1)$ also $h_t$ is critical. Then by Theorem~\ref{thm:charCriti}, $h_t$ has an Agmon ground state $u_t$ on $V$ normalised at $o\in V$ and a null-sequence $(e_n)$ on $V$, such that $0\leq e_n\to u_t$ pointwise on $V$. We show that this results in a contradiction. 
	
	We claim that $c_0\neq c_1$ implies that $u_t$ cannot be a multiple of $u_1$ or $u_2$. Indeed, let $H_t$ be the $p$-Schrödinger operator associated to $h_t$, $t\in [0,1]$, and assume that $u_t=C u_0$ for some $C> 0$. Since $u_t$ is an Agmon ground state for $t\in [0,1]$, we have $0=H_tu_t=tH_1u_t+(1-t)H_0u_t= tC^{p-1}H_1u_0$ on $V$. Hence, $u_0$ is also a global $p$-harmonic function for $H_1$ on $V$. Since $u_0(o)=1=u_1(o)$, we have by the uniqueness of the Agmon ground state (up to multiplication by a constant) that $u_0=u_1$. But this yields, $c_1u_1^{p-1}=-mLu_1=-mLu_0=c_0u_0^{p-1}=c_0u_1^{p-1}$, i.e., $c_0=c_1$ on $V$. Interchanging the role of $u_1$ and $u_0$ yields the claim.
	
	Let $(e_n^t)$ be a null sequence for $h_t$ converging pointwise to $u_t$, $t\in [0,1]$. Then $h_t(e_n^t)\to 0$ implies $h_1(e_n^t)\to 0$ and $h_0(e_n^t)\to 0$. Hence, $(e_n^t)$ is a null sequence for both $h_1$ and $h_0$. By construction, this sequence converges pointwise to $u_1$ and $u_0$, and also $u_t$. But since $c_0\neq c_1$, we have that $u_t$ cannot be a multiple of $u_1$ or $u_2$, and get a contradiction. Therefore, $h_t$ does not admit an Agmon ground state and by Theorem~\ref{thm:charCriti}, it is subcritical in $V$ for $t\in (0,1)$.
\end{proof}

A consequence of the previous proposition is given next. For the counterpart in the continuum confer \cite[Proposition~4.4]{PT07} or \cite[Proposition~4.19]{PP}.
\begin{corollary}\label{cor:PT07Prop44}
	Let $V\sse X$ be connected and non-empty, and let $h$ be subcritical in $V$. Let $\ww\in C_c(V)$ such that $\ww(x)<0$ for some $x\in V$. Then there exists $\tau_+ \in (0,\infty)$ and $\tau_-\in [-\infty, 0)$ such that $h+t\cdot \ww_p$ is subcritical in $V$ for $t\in (\tau_-,\tau_+)$, and such that $h+\tau_+\cdot \ww_p$ is critical in $V$.
\end{corollary}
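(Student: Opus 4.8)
The plan is to regard $t\mapsto h_t:=h+t\cdot\ww_p$ as a one-parameter family and to track the set of parameters for which it is non-negative. For each fixed $\phi\in C_c(V)$ the value $h_t(\phi)=h(\phi)+t\,\ww_p(\phi)$ is affine in $t$, so $\{t\in\RR:h_t(\phi)\ge 0\}$ is a closed interval; intersecting over all $\phi$ shows that
\[ S:=\{t\in\RR : h_t\ge 0\ \text{on}\ C_c(V)\} \]
is a closed interval. It contains $0$ because $h=h_0$ is subcritical, hence non-negative. Taking $\phi=1_{x_0}$ at a vertex $x_0\in V$ with $\ww(x_0)<0$ gives $\ww_p(\phi)=\ww(x_0)<0$, so $h_t(\phi)\to-\infty$ as $t\to+\infty$; thus $S$ is bounded above, and I set $\tau_+:=\sup S<\infty$, noting $\tau_+\in S$ by closedness.

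Next I would establish that subcriticality is stable under small perturbations, which is what pins the endpoints down. Since $h$ is subcritical, Lemma~\ref{lem:w>0} provides a strictly positive Hardy weight $w$, whence $h_t\ge (w+t\,\ww)_p$ on $C_c(V)$. As $\ww\in C_c(V)$ has finite support while $w$ is strictly positive, the weight $w+t\,\ww$ is again strictly positive for all sufficiently small $\abs{t}$; for such $t$ the functional $h_t$ is therefore subcritical. In particular $0$ lies in the interior of $S$, so $\tau_+>0$ and $\tau_-:=\inf S\in[-\infty,0)$.

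To get subcriticality on the whole open interval $(\tau_-,\tau_+)$ I would invoke the convexity result. For $t$ in the interior of $S$ choose $t_0,t_1\in S$ with $t_0<t<t_1$; writing $t=\lambda t_1+(1-\lambda)t_0$ one checks $h_t=\lambda h_{t_1}+(1-\lambda)h_{t_0}$, and the two potentials $c+t_0\ww$ and $c+t_1\ww$ differ on $V$ because $\ww\not\equiv 0$ and $t_0\ne t_1$. Since $h_{t_0}$ and $h_{t_1}$ are non-negative, Proposition~\ref{prop:convexh} gives that $h_t$ is subcritical in $V$. Finally, $h_{\tau_+}$ is non-negative (as $\tau_+\in S$); were it subcritical, the perturbation argument above would make $h_{\tau_++\epsilon}$ subcritical, hence non-negative, for small $\epsilon>0$, contradicting $\tau_+=\sup S$. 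Therefore $h_{\tau_+}$ is critical in $V$.

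\textbf{Main obstacle.} The crux is the openness of subcriticality under compactly supported perturbations: it simultaneously yields $\tau_+>0$, the strict negativity of $\tau_-$, and the criticality at $t=\tau_+$. This in turn relies on producing a \emph{strictly} positive Hardy weight (Lemma~\ref{lem:w>0}) together with the finiteness of $\supp\ww$, which guarantee $w+t\,\ww>0$ for small $\abs{t}$. The convexity input (Proposition~\ref{prop:convexh}) then upgrades mere non-negativity on the interior of $S$ to genuine subcriticality.
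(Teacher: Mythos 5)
Your proof is correct and follows essentially the same route as the paper's: the same three ingredients appear in the same roles, namely the strictly positive Hardy weight from Lemma~\ref{lem:w>0} combined with the compact support of $\ww$ for stability of subcriticality under small perturbations, Proposition~\ref{prop:convexh} for subcriticality at interior parameters, and the test function $1_x$ at a vertex with $\ww(x)<0$ for boundedness above. The only (harmless) organizational difference is that you define $S$ as the set of parameters where $h_t$ is non-negative (a closed interval, since $t\mapsto h_t(\phi)$ is affine for each $\phi$) rather than where it is subcritical, which has the small benefit of making explicit why $h+\tau_+\cdot\ww_p$ is non-negative --- a point the paper's proof passes over with ``clearly''.
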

\begin{proof}
	By Lemma~\ref{lem:w>0}, there is a strictly positive Hardy weight $w$ associated with $h$ on $V$. Thus, $(w+t\cdot \ww)_p(\phi)\leq h(\phi)+t \ww_p(\phi)$ for all $\phi\in C_c(V)$. Since $w>0$ on $V$, and $\ww$ is of compact support, we get that $w+t\cdot \ww> 0$ on $V$ for sufficiently small values of $\abs{t}$. Let
	\[S:=\set{t\in \RR: h+t\cdot \ww_p \text{ is subcritcal in } V}.\]
	By the previous proposition, Proposition~\ref{prop:convexh}, $S$ is an interval. Let denote by $\tau_+$ and $\tau_-$, the right and left boundary point of this interval, respectively. 
	
	We show that $\tau_+< +\infty$: Since there is $x\in V$ such that $\ww(x)<0$, we have $\ww_p(1_x)<0$ and thus, $h(1_x)+t\cdot \ww_p(1_x)<0$ for sufficiently large $t$.
	
	We show that $\tau_-=-\infty$ can be obtained: Assume that $\ww \lneq 0$ on $V$, then $\ww_p(\phi)\leq 0$ for all $\phi\in C_c(V)$, and thus for all $t<0$, we get $t\in S$.
	
	We show that $h+\tau_+ \ww_p$ is critical: Clearly, $h+\tau_+ \ww_p$ is non-negative on $C_c(V)$. Assume that $h+\tau_+ \ww_p$ is subcritical, i.e., $\tau_+\in S$. Arguing as in the beginning, we see that for sufficiently small $\epsilon >0$, we get that $h+(\tau_++\epsilon) \ww_p$ is subcritical. But this contradicts that $\tau_+$ is the right boundary point of the interval $S$. 
\end{proof}

The following result can be seen as the counterpart of Corollary~\ref{cor:PT07Prop44} for critical $p$-energy functionals. Confer \cite{PT07, PP, PR15} for the local case. 

\begin{corollary}\label{cor:PT07Prop45}
	Let $V\sse X$ be connected and non-empty, and let $h$ be critical in $V$ with Agmon ground state $u\in C(V)$.
	\begin{enumerate}[label=(\alph*)]
	\item\label{cor:PT07Prop45A} Assume that $\ww \in \ell^{\infty}(V)$ and there exists $\tau_+\in (0,\infty]$ such that $h+\tau_+\cdot \ww_p$ is non-negative in $C_c(V)$. Then, for any null-sequence $(e_n)$ of $h$ in $V$, we have $\liminf_{n\to\infty}\ww_p(e_n)>0$.
	\item\label{cor:PT07Prop45B} Assume that $\ww \in C_c(V)$ and $\ww_p(u)> 0$. Then there exists $\tau_+\in (0,\infty]$ such that $h+t\cdot \ww_p$ is subcritical for all $t\in (0,\tau_+)$.
	\end{enumerate}
\end{corollary}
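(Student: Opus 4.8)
The plan is to prove the two parts separately; both rest on Theorem~\ref{thm:charCriti}, in particular on the fact that every null-sequence of a critical $p$-energy functional converges pointwise to a positive multiple of its (unique) Agmon ground state.

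For part \ref{cor:PT07Prop45A} I would argue by contradiction, and I may assume $\tau_+<\infty$ (if $\tau_+=\infty$ I replace it by any finite value for which $h+\tau'\ww_p\geq 0$). From $h+\tau_+\ww_p\geq 0$ and $h(e_n)\to 0$ I immediately get $\ww_p(e_n)\geq -h(e_n)/\tau_+\to 0$, so $\liminf_n\ww_p(e_n)\geq 0$. Suppose the conclusion failed; then $\liminf_n\ww_p(e_n)=0$, and passing to a subsequence $(e_{n_k})$ with $\ww_p(e_{n_k})\to 0$ yields $(h+\tau_+\ww_p)(e_{n_k})=h(e_{n_k})+\tau_+\ww_p(e_{n_k})\to 0$. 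Since the $e_{n_k}$ are non-negative with $e_{n_k}(o)=\alpha$, and $h+\tau_+\ww_p\geq 0$, the sequence $(e_{n_k})$ is a null-sequence for the non-negative $p$-energy functional $h+\tau_+\ww_p$, whose potential is $c+\tau_+\ww$. Hence by Theorem~\ref{thm:charCriti} this functional is critical and $(e_{n_k})$ converges to a positive multiple of its ground state; but the same subsequence also converges to a positive multiple of $u$. Thus $u$ is (up to scaling) the ground state of $h+\tau_+\ww_p$, so $u$ is harmonic for both $H$ and $\tilde{H}:=H_{b,c+\tau_+\ww,p,m}$. Since $\tilde{H}u=Hu+(\tau_+\ww/m)u^{p-1}$ and $Hu=0$, this forces $\tau_+\ww\, u^{p-1}=0$ on $V$; as $u>0$ (strictly positive by the Harnack inequality) and $\tau_+>0$, we obtain $\ww\equiv 0$ on $V$, contradicting the non-triviality of $\ww$ (which must be assumed, since for $\ww\equiv 0$ the conclusion is false). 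Therefore $\liminf_n\ww_p(e_n)>0$.

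For part \ref{cor:PT07Prop45B} I would proceed in two steps, the first being the essential one. \textbf{Step 1 (non-negativity for small $t$).} I claim there is $\tau_0>0$ with $h+t\ww_p\geq 0$ on $C_c(V)$ for $t\in[0,\tau_0]$. If not, there are $t_n\downarrow 0$ and $\phi_n\in C_c(V)$ with $(h+t_n\ww_p)(\phi_n)<0$, so $h(\phi_n)<t_n(\ww_-)_p(\phi_n)\leq t_n\norm{\ww_-}_{\infty}(1_K)_p(\phi_n)$ with $K:=\supp\ww$ finite. Replacing $\phi_n$ by $\abs{\phi_n}$ (which does not increase $h$ and leaves $\ww_p$ unchanged) and normalising by $\max_K\abs{\phi_n}$, I obtain non-negative $\tilde\phi_n$ with $\max_K\tilde\phi_n=1$ and $h(\tilde\phi_n)\leq t_n\norm{\ww_-}_{\infty}(1_K)_p(\tilde\phi_n)\leq t_n\norm{\ww_-}_{\infty}\abs{K}\to 0$. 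Along a subsequence a fixed $o\in K$ satisfies $\tilde\phi_{n_k}(o)=1$, so $(\tilde\phi_{n_k})$ is a null-sequence of $h$ and converges pointwise to $Cu$ with $C>0$. As $\ww$ is finitely supported, $\ww_p(\tilde\phi_{n_k})\to C^p\ww_p(u)>0$, whence $h(\tilde\phi_{n_k})+t_{n_k}\ww_p(\tilde\phi_{n_k})>0$ for large $k$, contradicting the sign assumption.

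\textbf{Step 2 (subcriticality via convexity).} With $\tau_0$ from Step~1, I set $h_0:=h$ (critical, potential $c$) and $h_1:=h+\tau_0\ww_p$ (non-negative by Step~1, potential $c+\tau_0\ww$). Since $\ww_p(u)>0$ forces $\ww\not\equiv 0$, the two potentials differ on $V$, and both functionals are non-negative on $C_c(V)$; hence Proposition~\ref{prop:convexh} shows that $h+s\tau_0\ww_p=(1-s)h_0+sh_1$ is subcritical for every $s\in(0,1)$, so taking $\tau_+:=\tau_0$ gives subcriticality of $h+t\ww_p$ for all $t\in(0,\tau_+)$. The main obstacle in \ref{cor:PT07Prop45A} is recognising that $\liminf_n\ww_p(e_n)=0$ promotes a subsequence of $(e_n)$ to a genuine null-sequence of the perturbed functional, after which uniqueness of the ground state closes the argument; in \ref{cor:PT07Prop45B} the delicate point is the compactness in Step~1 — choosing a normalisation on $\supp\ww$ that keeps $h$ small while forcing convergence to the ground state — whereas Step~2 is then a routine application of Proposition~\ref{prop:convexh}.
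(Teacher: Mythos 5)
Your part \ref{cor:PT07Prop45B} follows essentially the paper's own route: assume no admissible $\tau_+$ exists, pass to non-negative test functions via the reverse triangle inequality, normalise over the finite set $\supp\ww$, extract by pigeonhole a null-sequence of $h$ pinned at some $o\in\supp\ww$, let it converge to $Cu$ via Theorem~\ref{thm:charCriti}, and contradict $\ww_p(u)>0$; then Proposition~\ref{prop:convexh} upgrades non-negativity of $h+\tau_0\ww_p$ to subcriticality of $h+t\ww_p$ on $(0,\tau_0)$. The paper normalises by $\norm{\cdot}_{p,1_{\supp \ww}}$ and you by the maximum on $\supp\ww$ --- an immaterial difference.

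Part \ref{cor:PT07Prop45A} is where you genuinely depart from the paper. The paper argues directly: Proposition~\ref{prop:convexh} makes $h+t\ww_p$ subcritical for $t\in(0,\tau_+)$, Lemma~\ref{lem:w>0} supplies a strictly positive Hardy weight $w$ for this perturbed functional, and Fatou's lemma yields $t\cdot\liminf_n\ww_p(e_n)\geq\liminf_n w_p(e_n)\geq C^pw_p(u)>0$. You instead run a contradiction through uniqueness of the Agmon ground state: if $\liminf_n\ww_p(e_n)=0$, a subsequence of $(e_n)$ becomes a null-sequence for $h+\tau_+\ww_p$, so that functional is critical with ground state proportional to $u$, whence $\tau_+\ww\, u^{p-1}/m=0$ and $\ww\equiv 0$ on $V$. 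This buys independence from Fatou's lemma and from the Hardy weight of the perturbed functional, and it makes visible the tacit hypothesis $\ww\not\equiv 0$ on $V$, which the statement indeed needs (the paper needs it too, since Proposition~\ref{prop:convexh} requires $c_0\neq c_1$). One citation must be tightened: ``a null-sequence exists, hence the functional is critical'' is not literally Theorem~\ref{thm:charCriti}~\ref{thm:critical2}, which asserts existence of null-sequences at \emph{every} point; the clean justification is Lemma~\ref{lem:w>0} --- were $h+\tau_+\ww_p$ subcritical, it would admit a strictly positive Hardy weight $w$, giving $(h+\tau_+\ww_p)(e_{n_k})\geq w_p(e_{n_k})\geq w(o)\alpha^p>0$, contradicting that this quantity tends to $0$. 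With that one-line repair, both parts are correct.
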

\begin{proof}
	Ad \ref{cor:PT07Prop45A}: By Proposition~\ref{prop:convexh}, $h+ t\cdot \ww_p$ is subcritical for all $t\in (0,\tau_+)$. Thus, we can use Lemma~\ref{lem:w>0}, and get the existence of a strictly positive $p$-Hardy weight $w$ associated with the $p$-energy functional $h+ t\cdot \ww_p$. Let $(e_n)$ be an arbitrary null-sequence of $h$ such that  $e_n\to C\cdot u> 0$ for some positive constant $C$, which exists by Theorem~\ref{thm:charCriti}. By Fatou's lemma, we infer
	\begin{align*}
		t\cdot \liminf_{n\to\infty} \ww_p(e_n)&= \liminf_{n\to\infty} h(e_n) + \liminf_{n\to\infty}t\cdot \ww_p(e_n) \\
		&\geq \liminf_{n\to\infty}w_p(e_n) \geq C^p\cdot w_p(u)> 0.
	\end{align*}
	Dividing by $t>0$ yields \ref{cor:PT07Prop45A}.
	
	Ad \ref{cor:PT07Prop45B}: The strategy is as follows: If there is $\tau_+\in (0,\infty]$ such that $h+\tau_+\cdot \ww_p$ is non-negative in $C_c(X)$, then by Proposition~\ref{prop:convexh}, we get that $h+t\cdot \ww_p$ is subcritical for all $t\in (0,\tau_+)$.
	
	Thus, let us assume that such a $\tau_+\in (0,\infty]$ does not exists. We will show that this leads to a contradiction. Hence, we assume that for all $t>0$, there exists $\phi_t\in C_c(V)$ such that
	\begin{align}\label{eq:PT07_1}
	h(\phi_t)+t\cdot \ww_p(\phi_t)<0.
	\end{align}
	By the reversed triangle inequality, $h(\phi_t)\geq h(\abs{\phi_t})$. Thus, we can assume without loss of generality that $\phi_t\geq 0$. Since $h$ is non-negative on $C_c(V)$, it follows from \eqref{eq:PT07_1}, that
	\begin{align}\label{eq:PT07_2}
	\ww_p(\phi_t)< 0.
	\end{align}
	In particular, by Inequality~\eqref{eq:PT07_2}, we have $\supp(\phi_t)\cap \supp (\ww)\neq \emptyset$. 
	
	Let $\psi_t:=\phi_t/\norm{\phi_t}_{p,1_{\supp \ww}}$. Then, using that $\ww$ is bounded on $V$, we get
	\[\lim_{t\to 0}t\cdot \ww_p(\psi_t)\leq \sup_{V}(\ww) \cdot \lim_{t\to 0} t\cdot \norm{\psi_t}^{p}_{p,1_{\supp \ww}}=0.\]
	Since $h$ is non-negative on $C_c(V)$ we obtain using \eqref{eq:PT07_1} and the calculation above,
	\[0\leq \lim_{t\to 0}h(\psi_t)\leq  \lim_{t\to 0}t\cdot \ww_p(\psi_t) =0.\]
	Thus, $h(\psi_t)\to 0$ as $t\to 0$. 
	
	Since $\ww$ is finitely supported and since $\supp(\phi_t)\cap \supp (\ww)\neq \emptyset$ for all $t>0$, we have that for some $o\in \supp (\ww)$, there exists $\epsilon>0$ and a subsequence $(\psi_{t_n})$ of the net $(\psi_t)$ such that $\psi_{t_n}(o)\geq\epsilon$, and $t_n\to 0$ as $n\to\infty$.
	
	Let $\psi_n:=\psi_{t_n}/\psi_{t_n}(o)$, then $h(\psi_n)=(1/\psi_{t_n}(o))^ph(\psi_{t_n})\to 0$ as $n\to \infty$. Hence, $(\psi_n)$ is a null sequence	of $h$, and thus, $\psi_n\to C u$ for some positive constant $C$. By the assumptions on $\ww$, we thus have
	\[\lim_{n\to \infty} \ww_p(\psi_n)=C^p\ww_p(u)> 0.\]
	This contradicts \eqref{eq:PT07_2}.
\end{proof}

We continue with a simple observation which is basically a consequence of Lemma~\ref{lem:PT07_164}. Confer \cite[Proposition~4.2]{PT07}, or \cite[Proposition~4.18]{PP}  for the counterpart in the continuum. 

Using a different and more technical method, it is shown in \cite[Corollary~4.2]{F:AAP} that if $h$ is non-negative on $C_c(V\cup\set{o})$ for some $V\subsetneq X$ and $o\in \partial V$ then $h$ is subcritical in $V$. This clearly makes following lemma, Lemma~\ref{lem:exHW}, redundant. However, for convenience, we give a short argument based on the Poincaré inequality, Lemma~\ref{lem:PT07_164}, here.
\begin{lemma}\label{lem:exHW}
	Let $W\subsetneq V\sse  X$ be both connected and non-empty. 
	\begin{enumerate}[label=(\alph*)]
		\item\label{lem:exHW1} If $h$ is non-negative on $C_c(V)$, then $h$ is subcritical in $W$.
		\item\label{lem:exHW2} If $h$ is critical in $W$, then $h$ is supercritical in $V$.
	\end{enumerate}
\end{lemma}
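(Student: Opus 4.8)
The plan is to prove part~\ref{lem:exHW1} first and then obtain part~\ref{lem:exHW2} from it by contraposition. For part~\ref{lem:exHW1}, note that since $h$ is non-negative on $C_c(V)$ it is either subcritical or critical in $V$, and I treat these two cases separately. If $h$ is subcritical in $V$, then by Lemma~\ref{lem:w>0} there is a strictly positive Hardy weight $w$ on $V$; restricting the inequality $h\geq w_p$ from $C_c(V)$ to the subspace $C_c(W)\sse C_c(V)$ immediately shows that $h$ is subcritical in $W$ with the strictly positive weight $w|_W$.

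The substantial case is when $h$ is critical in $V$, and here I would invoke the Poincaré inequality, Lemma~\ref{lem:PT07_164}. The key idea is to exploit that $W\subsetneq V$ by picking a vertex $o\in V\setminus W$ and choosing the test function $\psi:=1_o\in C_c(V)$. Since the Agmon ground state $u$ is strictly positive (by Harnack) and $m>0$, we have $\ip{u}{1_o}_V=u(o)m(o)\neq 0$, so Lemma~\ref{lem:PT07_164} applies and produces a strictly positive $w$ on $V$ and a constant $C>0$ with
\[C^{-1}w_p(\phi)\leq h(\phi)+C\abs{\ip{\phi}{1_o}_V}^p,\qquad \phi\in C_c(V).\]
Now for $\phi\in C_c(W)$ the function $\phi$ vanishes at $o\notin W$, so $\ip{\phi}{1_o}_V=\phi(o)m(o)=0$ and the lower-order term disappears, leaving $C^{-1}w_p(\phi)\leq h(\phi)$ for all $\phi\in C_c(W)$. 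This is a Hardy inequality on $W$ with the strictly positive weight $C^{-1}w|_W$, hence $h$ is subcritical in $W$.

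For part~\ref{lem:exHW2} I argue by contradiction: if $h$ were not supercritical in $V$, then by definition $h$ would be non-negative on $C_c(V)$, and part~\ref{lem:exHW1} would force $h$ to be subcritical in $W$. This contradicts the hypothesis that $h$ is critical in $W$, since subcriticality (existence of a Hardy weight) and criticality (non-negativity together with the failure of any Hardy inequality) are mutually exclusive in $W$. Therefore $h$ must be supercritical in $V$.

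The main obstacle is the critical case of part~\ref{lem:exHW1}: the whole mechanism is to "spend" the extra vertex $o\in V\setminus W$ in order to annihilate the error term $\abs{\ip{\phi}{\psi}_V}^p$ coming from the Poincaré inequality. The choice $\psi=1_o$ is engineered precisely so that this term vanishes on $C_c(W)$ while the non-degeneracy condition $\ip{u}{\psi}_V\neq 0$ required by Lemma~\ref{lem:PT07_164} still holds; verifying the latter is where the strict positivity of the ground state and of $m$ enters. Everything else is a routine restriction of inequalities from $C_c(V)$ to $C_c(W)$.
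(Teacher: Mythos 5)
Your proposal is correct and follows essentially the same route as the paper: restriction of a strictly positive Hardy weight (Lemma~\ref{lem:w>0}) in the subcritical case, and the Poincar\'{e} inequality (Lemma~\ref{lem:PT07_164}) with the test function $\psi=1_o$ for $o\in V\setminus W$ in the critical case, with part~\ref{lem:exHW2} obtained by contraposition. Your explicit verification that $\ip{u}{1_o}_V=u(o)m(o)\neq 0$ via strict positivity of the Agmon ground state is a detail the paper leaves implicit, but otherwise the arguments coincide.
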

\begin{proof}
	If $h$ is subcritical in $V$, then Lemma~\ref{lem:w>0} implies that we have a Hardy weight $w$ which is strictly positive in $V$. Thus, taking $w\cdot 1_{W}$, we have a Hardy weight on $W$, and  $h$ is subcritical in $W$.
	
	Let $h$ be critical in $V$ with Agmon ground state $u\in C(V)$. Take $o\in V\setminus W$. By  Lemma~\ref{lem:PT07_164}, there exists a strictly positive function $w$ on $V$ and a positive constant $C$ such that 
	\[C^{-1}w_p(\phi)\leq h(\phi)+ C\abs{\ip{\phi}{1_o}_{V}}^{p}= h(\phi), \qquad \phi\in C_c(W).\]
	Thus, $h$ is subcritical in $W$. This proves \ref{lem:exHW1}. The second statement \ref{lem:exHW2} is just the contraposition of the first one if $h$ is non-negative on $C_c(W)$.
\end{proof}

\subsection{Proof of Theorem~\ref{thm:decay}}
Recall that $\deg_V(x):=\sum_{y\in V}b(x,y)$ for $V\sse X$ and $x\in X$.
\begin{proof}[Proof of Theorem~\ref{thm:decay}]
Let us mention that there is nothing to prove for $W=V$ or $K=V$. Hence, in the following, we assume that $W\subsetneq V$ or $K\subsetneq V$.
	
Ad~\ref{thm:decayCritical}: Let $h$ be critical in $V$. Then, by Theorem~\ref{thm:charCriti}, there is a null sequence $(e_n)$ in $V$ such that $e_n\to u$ pointwise, where $u\in C(V)$ is the Agmon ground state of $h$ in $V$.  
	By Lemma~\ref{lem:exHW}, we have a $p$-Hardy weight $w$ on $V\setminus W$, i.e.,
	\[ w_p(\phi)\leq h(\phi), \qquad \phi\in C_c(V\setminus W).\]
	 By Fatou's lemma, we have 
	\[w_p(u1_{V\setminus W})\leq \liminf_{n\to\infty} w_p(e_n1_{V\setminus W}).\]
	Thus, we have to show that the right-hand side is finite.	We calculate, using Theorem~\ref{thm:GSR},
	\begin{align*}
		 w_p(e_n1_{V\setminus W})\leq  h(e_n1_{V\setminus W})\asymp  h_u(e_n1_{V\setminus W}/u).
	\end{align*}
Moreover, 
\begin{align*}
h_u(e_n1_{V\setminus W}/u)
&=\sum_{x,y\in V\setminus W}b(x,y)u(x)u(y)\abs{\nabla_{x,y}\frac{e_n}{u}}^{2} \\
&\qquad\cdot \left((u(x)u(y))^{1/2}\abs{\nabla_{x,y}\frac{e_n}{u}}+ \frac{\abs{\frac{e_n(x)}{u(x)}}+\abs{\frac{e_n(y)}{u(y)}}}{2}\abs{\nabla_{x,y}u}\right)^{p-2} \\ 
&\quad+ 2\cdot \sum_{x\in W, y\in V\setminus W}b(x,y)u(x)u(y)\frac{e^2_n(y)}{u^2(y)}\\
&\qquad\cdot \left( \bigl(u(x)u(y)\bigr)^{1/2}\cdot \frac{e_n(y)}{u(y)}+ \frac{1}{2}\cdot \frac{e_n(y)}{u(y)}\abs{\nabla_{x,y}u} \right)^{p-2} 
\end{align*}	
This first sum on the right-hand side can be estimated from above by	$h_u(e_n/u)$. Since $h_u(e_n/u)\asymp h(e_n)\to 0$ as $n\to \infty$, we only need to discuss the second sum. Here we have, using that $u$ satisfies $\sup_{x\in W, y\in V\setminus W, x\sim y}u(x)/u(y)< \infty$, the existence of a constant $C_p>0$ such that
\begin{align*}
&\sum_{x\in W, y\in V\setminus W}b(x,y) u(x)u(y)\frac{e^2_n(y)}{u^2(y)}\left( \bigl(u(x)u(y)\bigr)^{1/2}\cdot \frac{e_n(y)}{u(y)}+ \frac{1}{2}\cdot \frac{e_n(y)}{u(y)}\abs{\nabla_{x,y}u} \right)^{p-2} \\
&=\sum_{x\in W, y\in V\setminus W}b(x,y)\frac{u(x)}{u(y)}\left( \left(\frac{u(x)}{u(y)}\right)^{1/2}+\abs{\frac{u(x)}{u(y)}-1}\right)^{p-2}e^p_n(y)\\
&\leq C_p \sum_{x\in W, y\in V\setminus W}b(x,y)e^p_n(y)= C_p \sum_{y\in V\setminus W}e^p_n(y)\deg_{W}(y).
\end{align*}
 
We can assume by the result in Appendix~\ref{app} that $e_n\leq u$. Since we have by assumption $u\in \ell^p(V\setminus W, \deg_W)$, we proved \ref{thm:decayCritical}.

Ad~\ref{thm:decaySubcritical}:	Let $h$ be subcritical in $V$ with corresponding Hardy weight $w$, i.e., $h-w_p\geq 0$ on $C_c(V)$. Let $v\in \MM(V\setminus K)$ for some finite $K\subsetneq V$.
	
	Using Corollary~\ref{cor:PT07Prop44}, there is a potential $\ww\in C_c(K)=C(K)$ such that $h-\ww_p$ is critical in $V$. Let denote the Agmon ground state on $V$ of $h-\ww_p$ by $u$. Hence, $u\in \MM(V)\sse \MM(V\setminus K)$ with respect to $h$. Furthermore, since $u$ and $v$ are minimal on $V\setminus K$ and on the finite set $K$ there always exists a constants $C>0$ for the strictly positive functions $u$ and $v$ such that $u\leq C v$, we have $v \asymp u$ on $V\setminus K$.
	
Since for all $\phi\in C_c(V\setminus K)$
	\[(h-\ww_p)(\phi)=h(\phi)\geq w_p(\phi)= w_p(1_{V\setminus K}\phi),\]
	$w$ is also a Hardy weight for $h-\ww_p$ on $C_c(V\setminus K)$. Thus, we can use the first part, and get that $w\in \ell^{1}(V\setminus K, v^{p})=\ell^{1}(V\setminus K, u^{p})=\ell^1(V,u^p)$.
\end{proof}

\subsection{Proof of Theorem~\ref{thm:decayImpliesOpti}}\label{sec:ncopti}
Here we show, in particular, that if the graph is locally finite and the Agmon ground state is proper and of bounded oscillation, then null-criticality implies optimality near infinity.

\begin{proof}[Proof of Theorem~\ref{thm:decayImpliesOpti}]
	 Let $u$ be an Agmon ground state of the critical energy functional $h-w_p$ in $V$. We show the contraposition: Assume that $w$ is not optimal near infinity in $V$, then there exists $\mu> 1$ and $K\sse V$ finite, such that $h-\mu\cdot w_p \geq 0$ on $C_c(V\setminus K)$. Thus, $(h-w_p)- (\mu -1)w_p\geq 0$ on $C_c(V\setminus K)$. By the assumptions, we can use Theorem~\ref{thm:decay}~\ref{thm:decayCritical}, and get $w\in \ell^{1}(V, u^{p})$. The latter is equivalent to $u\in \ell^{p}(V,w)$, and therefore, $w$ is not null-critical.
\end{proof}

\section{Optimal Hardy Weights}\label{sec:opti}
In the following subsections, we will derive optimal Hardy weights for energy functionals with arbitrary potential part. This will be achieved by the virtue of a coarea formula, Proposition~\ref{prop:coarea} in Subsection~\ref{sec:coarea}. Moreover, we will need the technicality that roots of superharmonic functions are also superharmonic. This is discussed in Subsection~\ref{sec:supersolution}. We start proving optimality by showing criticality in Subsection~\ref{sec:criti}, and show null-criticality and optimality near infinity in the two succeeding subsections. The proofs of the criticality and null-criticality work mostly along the lines of the proofs of \cite{KePiPo2} and \cite{DP16, Versano}, by either generalising from $p=2$ to $p\in (1,\infty)$ or by using discrete non-local versions of local methods in \cite{DP16, Versano}. The proof of the optimality near infinity is instead inspired by results in \cite{KP20}, confer also Theorem~\ref{thm:decay} and Theorem~\ref{thm:decayImpliesOpti}.

But before we start, we note the following discrete version of \cite[Lemma~2.35]{Versano}.
\begin{lemma}[Versano's lemma]\label{lem:opti}
	Let $V\sse X$ be connected and non-empty. Assume that $h$ admits an optimal $p$-Hardy weight $w$ on $V$, and $h-w_p$ has an Agmon ground state  in $C(V)\cap \ell^p(V, b(x,\cdot))$ for all $x\in V$, which is of bounded oscillation in $V$. Let $\omega \in C(X)$ be such that $\omega \geq -\epsilon \cdot w$ on $V$ for some $\epsilon \in [0,1)$. Then, $w+\omega$ is an optimal $p$-Hardy weight of $h+\omega_p$ in $V$.
\end{lemma}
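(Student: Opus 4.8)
The goal is to show that $w+\omega$ is an optimal $p$-Hardy weight of $h+\omega_p$, which means verifying the three conditions in Definition~\ref{def:optimal}: criticality, null-criticality, and optimality near infinity. My plan is to prove criticality first, then derive optimality near infinity as a consequence of the decay theory, and finally obtain null-criticality via Theorem~\ref{thm:decayImpliesOpti} or directly.

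For criticality, I would start by observing that $(h+\omega_p)-(w+\omega)_p = h-w_p$ as functionals on $C_c(V)$, since the $\omega$-terms cancel exactly. Since $w$ is an optimal Hardy weight of $h$, the functional $h-w_p$ is critical in $V$ by Definition~\ref{def:optimal}(i), and hence $(h+\omega_p)-(w+\omega)_p$ is also critical in $V$ with the \emph{same} Agmon ground state $u$. So condition (i) is immediate, and in fact the entire shifted functional is unchanged — the point of the lemma is really that the \emph{decomposition} has changed. I should first check that $w+\omega \gneq 0$ on $V$, which follows from $\omega \geq -\epsilon w$ with $\epsilon <1$: indeed $w+\omega \geq (1-\epsilon)w \gneq 0$.

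For optimality near infinity, condition (iii), I would invoke Theorem~\ref{thm:decayImpliesOpti}. The hypotheses there require that $(h+\omega_p)-(w+\omega)_p$ be null-critical with respect to $w+\omega$ with Agmon ground state $u$ lying in $C(V)\cap \ell^p(V,b(x,\cdot))$ for all $x\in V$ and of bounded oscillation. The regularity and bounded-oscillation hypotheses on $u$ are inherited directly from the assumptions of the present lemma, since $u$ is the same ground state. So the crux is to establish null-criticality of $(h+\omega_p)-(w+\omega)_p$ with respect to $w+\omega$, i.e. that $u\notin \ell^p(V,w+\omega)$, and then optimality near infinity follows for free. This reduces conditions (ii) and (iii) to a single task.

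The main obstacle — and the heart of the argument — is therefore establishing null-criticality, $u\notin \ell^p(V,w+\omega)$. Here I would exploit the two-sided bound $(1-\epsilon)w \leq w+\omega$. Since $w$ is an optimal Hardy weight of $h$, the functional $h-w_p$ is null-critical with respect to $w$, so $u\notin \ell^p(V,w)$, i.e. $\sum_{x\in V}|u(x)|^p w(x) = \infty$. From $w+\omega \geq (1-\epsilon)w$ with $1-\epsilon>0$, I get
\[
\sum_{x\in V}\abs{u(x)}^p(w+\omega)(x)\geq (1-\epsilon)\sum_{x\in V}\abs{u(x)}^p w(x)=\infty,
\]
so $u\notin\ell^p(V,w+\omega)$, which is exactly null-criticality of the shifted functional with respect to $w+\omega$. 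This settles condition (ii). I would need to be slightly careful that the relevant ground state for the decomposition $(h+\omega_p)-(w+\omega)_p$ is genuinely $u$ (guaranteed by the functional identity above and uniqueness of the Agmon ground state from Theorem~\ref{thm:charCriti}\ref{thm:critical22}), and that null-criticality is measured against the weight $w+\omega$ rather than $w$; the monotonicity comparison handles exactly this shift. With null-criticality in hand and the inherited regularity of $u$, Theorem~\ref{thm:decayImpliesOpti} delivers optimality near infinity, completing all three conditions.
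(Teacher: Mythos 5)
Your proposal is correct and follows essentially the same route as the paper: the functional identity $(h+\omega_p)-(w+\omega)_p=h-w_p$ gives criticality with the same Agmon ground state, the bound $w+\omega\geq(1-\epsilon)w$ transfers null-criticality, and Theorem~\ref{thm:decayImpliesOpti} then yields optimality near infinity. The only cosmetic difference is that the paper also records explicitly that $h+\omega_p\geq h-\epsilon w_p\geq h-w_p\geq 0$ on $C_c(V)$, which you implicitly cover via $w+\omega\gneq 0$ and the criticality of the shifted functional.
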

\begin{proof}
	Firstly, $h+\omega_p$ is non-negative on $C_c(V)$ because of $\omega \geq -\epsilon w$ on $V$, and $w\gneq 0$ is a $p$-Hardy weight of $h$ in $V$, i.e., $h- \omega_p\geq h-\epsilon w_p\geq h-w_p\geq 0$ on $C_c(V)$. Since $(h+\omega_p)-(w+\omega)_p= h-w_p$ on $C_c(V)$, also the right-hand side is critical in $V$ with the same ground state $u$. Moreover, because of  $(w+\omega)_p(u)\geq (1-\epsilon)w_p(u)=\infty$, the functional $(h+\omega_p)-(w+\omega)_p$ is null-critical with respect to $w+\omega$. Thus, by Theorem~\ref{thm:decayImpliesOpti}, $w+\omega \gneq 0$ is an optimal $p$-Hardy weight of $h+\omega_p$ in $V$.
\end{proof}

Morally, this lemma tells us that sign changing and non-positive potentials are of particular interest. And once, we found a nice optimal Hardy weight for some energy functional, we get a family of optimal Hardy weights by simply adding certain potentials at the weight and the functional.
\subsection{Supersolution Constructions}\label{sec:supersolution}
One particular crucial step in the proof of Theorem~\ref{thm:mainresultc<0} will be to show that for a given positive and superharmonic function, certain roots of this function are again superharmonic. It turns out that for positive potentials, this can be deduced from standard techniques, see Proposition~\ref{prop:NewSuperharmonicFromOldc>0}. However, for arbitrary potentials a slightly modified Schrödinger operators has to be considered. This is shown next. In this subsection, we also consider the case $p= 1$. 

The following result can be seen as the discrete counterpart to \cite[Lemma~2.10]{DP16} and \cite[Corollary~3.6]{Versano} and uses the mean value theorem to circumvent the use of the chain rule, confer also with \cite[p. 350]{HuK} and \cite[Section~2.3]{KLW21}.

\begin{proposition}[Supersolutions via the mean value theorem]\label{prop:MeanValue}
	Let $p\geq 1$ and $u\in \FF(\set{x})$ for some $x\in X$. Let $\phi\in C^2([\inf_{\set{x}\cup \partial \set{x}} u,\sup_{\set{x}\cup \partial \set{x}}  u],\RR)$ be an increasing and concave function, i.e., $\phi', -\phi'' \geq 0$. 
	Then, we have
	\begin{align}\label{eq:MeanValue}
	L (\phi\circ u) (x) \geq ((\phi'\circ u)(x))^{p-1} Lu(x).
	\end{align}
	In particular, if $u(x)\neq u(y)$ for some $y\sim x$ and $\phi$ is strictly increasing, then the inequality in \eqref{eq:MeanValue} is strict.
	
	Moreover, if either
	\begin{enumerate}[label=(\alph*)]
		\item\label{prop:MeanWurzel}
		$u(x)>0$, and we have that $\phi(a)\geq \alpha \cdot a \cdot  \phi'(a)$ for some $\alpha >0$ and all $a\in [\inf_{\set{x}\cup \partial \set{x}}  u,\sup_{\set{x}\cup \partial \set{x}}  u]$, and also $H_{b,\alpha^{p-1}\cdot c,p,m}u(x) \geq 0$, or
		\item\label{prop:MeanSgn} $Hu(x) \geq 0$, and we have $c(x)=0$, or $\p{(\phi\circ u)(x)}= ((\phi'\circ u)(x))^{p-1}$, or $\sgn c(x)= \sgn (\p{(\phi\circ u)(x)}- ((\phi'\circ u)(x))^{p-1})$.
	\end{enumerate}				
	Then, $H(\phi\circ u)(x) \geq 0$.
	
	In particular, if $u(x)>0$, and also $H_{b,q^{p-1}\cdot c,p,m}u(x) \geq 0$ for some $q\geq 1$, then $H( u^{1/q})(x) \geq 0$. If, in addition to that, $u(x)\neq u(y)$ for some $y\sim x$, then we have $H( u^{1/q})(x) > 0$.
\end{proposition}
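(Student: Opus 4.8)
The plan is to prove the main inequality \eqref{eq:MeanValue} first, then bootstrap the various ``superharmonicity'' conclusions from it. The key tool is the mean value theorem applied to $\phi$ on each edge. Fix $x$ and take any $y\sim x$. Since $\phi$ is $C^2$ on the relevant interval, the mean value theorem gives a point $\xi_{x,y}$ between $u(x)$ and $u(y)$ with $\nabla_{x,y}(\phi\circ u)=\phi(u(x))-\phi(u(y))=\phi'(\xi_{x,y})\,\nabla_{x,y}u$. Because $\phi'\geq 0$, the sign of $\nabla_{x,y}(\phi\circ u)$ matches that of $\nabla_{x,y}u$, so
\[
\p{\nabla_{x,y}(\phi\circ u)}=\bigl(\phi'(\xi_{x,y})\bigr)^{p-1}\p{\nabla_{x,y}u}.
\]
Now concavity of $\phi$ (that is $\phi''\leq 0$) means $\phi'$ is non-increasing, so $\phi'(\xi_{x,y})\geq \phi'(u(x))=(\phi'\circ u)(x)$ whenever $\xi_{x,y}\leq u(x)$, and the opposite when $\xi_{x,y}\geq u(x)$. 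The point is that in both cases $\bigl(\phi'(\xi_{x,y})\bigr)^{p-1}\p{\nabla_{x,y}u}\geq \bigl((\phi'\circ u)(x)\bigr)^{p-1}\p{\nabla_{x,y}u}$: when $\nabla_{x,y}u\geq 0$ we have $\xi_{x,y}\leq u(x)$ hence a larger derivative and a positive factor $\p{\nabla_{x,y}u}$, and when $\nabla_{x,y}u\leq 0$ we have $\xi_{x,y}\geq u(x)$ hence a smaller derivative but a negative factor, so the inequality flips back. Summing against $b(x,y)/m(x)$ over $y$ yields $L(\phi\circ u)(x)\geq \bigl((\phi'\circ u)(x)\bigr)^{p-1}Lu(x)$, which is \eqref{eq:MeanValue}; strictness follows because if $u(x)\neq u(y)$ and $\phi$ is strictly increasing, then the derivative-comparison is strict on that edge.

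For the second batch of claims I would add the potential term to both sides. Under \ref{prop:MeanWurzel}, the hypothesis $\phi(a)\geq \alpha\, a\,\phi'(a)$ evaluated at $a=u(x)$ gives $(\phi\circ u)(x)\geq \alpha\, u(x)(\phi'\circ u)(x)\geq 0$ (using $u(x)>0$), so raising to the $\langle p-1\rangle$ power and recalling $\p{\cdot}$ is monotone yields $\p{(\phi\circ u)(x)}\geq \alpha^{p-1}u(x)^{p-1}\bigl((\phi'\circ u)(x)\bigr)^{p-1}$. Combining this with \eqref{eq:MeanValue} and factoring out $\bigl((\phi'\circ u)(x)\bigr)^{p-1}$, I get
\[
H(\phi\circ u)(x)=L(\phi\circ u)(x)+\frac{c(x)}{m(x)}\p{(\phi\circ u)(x)}\geq \bigl((\phi'\circ u)(x)\bigr)^{p-1}\Bigl(Lu(x)+\frac{\alpha^{p-1}c(x)}{m(x)}\p{u(x)}\Bigr),
\]
and the bracket is exactly $H_{b,\alpha^{p-1}c,p,m}u(x)\geq 0$, giving $H(\phi\circ u)(x)\geq 0$. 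Under \ref{prop:MeanSgn} the argument is a sign-chase: $Hu(x)\geq 0$ together with \eqref{eq:MeanValue} and the stated sign condition on $c(x)$ forces the potential contribution $\frac{c(x)}{m(x)}\bigl(\p{(\phi\circ u)(x)}-\bigl((\phi'\circ u)(x)\bigr)^{p-1}\p{u(x)}\bigr)$ to be non-negative, so that $H(\phi\circ u)(x)\geq \bigl((\phi'\circ u)(x)\bigr)^{p-1}Hu(x)\geq 0$.

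Finally, the specialisation to the root $u^{1/q}$ is the case $\phi(a)=a^{1/q}$, which is increasing and concave for $q\geq 1$ on $(0,\infty)$, with $\phi'(a)=\tfrac1q a^{1/q-1}$, so that $a\,\phi'(a)=\tfrac1q\phi(a)$; hence the inequality $\phi(a)\geq \alpha\, a\,\phi'(a)$ holds with $\alpha=q$, and \ref{prop:MeanWurzel} applies with $\alpha^{p-1}c=q^{p-1}c$ under the hypothesis $H_{b,q^{p-1}c,p,m}u(x)\geq 0$, giving $H(u^{1/q})(x)\geq 0$; strictness comes from the strictness clause of \eqref{eq:MeanValue} when some neighbour has $u(y)\neq u(x)$. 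The main obstacle I expect is the careful case distinction in the first paragraph: the factor $\p{\nabla_{x,y}u}$ changes sign with the edge, so the concavity comparison $\phi'(\xi_{x,y})\gtrless(\phi'\circ u)(x)$ has to be tracked against that sign to see that the two effects always reinforce the desired inequality, and one must also handle the convention $\infty\cdot 0=0$ cleanly in the range $1<p<2$ where $\p{\cdot}$ is singular at $0$.
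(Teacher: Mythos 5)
Your proposal reproduces the paper's own proof essentially step for step: the same mean-value-theorem factorisation $\nabla_{x,y}(\phi\circ u)=\phi'(\xi_{x,y})\,\nabla_{x,y}u$, the same concavity-versus-sign bookkeeping (the paper splits the sum over $\set{y:\nabla_{x,y}u>0}$ and $\set{y:\nabla_{x,y}u<0}$ rather than arguing edge by edge, which is cosmetic), the same factoring computation for \ref{prop:MeanWurzel}, the same add-and-subtract device for \ref{prop:MeanSgn}, and the same specialisation $\phi=(\cdot)^{1/q}$, $\alpha=q$ at the end.

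The one step in your write-up that does not hold as stated is the justification of strictness. Strict monotonicity of $\phi$ does not make the derivative comparison strict: for $\phi(a)=a$, which is strictly increasing, $C^2$, and concave in the stated weak sense $-\phi''\geq 0$, one has $\phi'(\xi_{x,y})=\phi'(u(x))=1$ on every edge, so \eqref{eq:MeanValue} is an equality no matter how $u$ oscillates. What actually produces strictness is strict concavity of $\phi$ together with $p>1$: the mean value point $\xi_{x,y}$ lies in the open interval between $u(y)$ and $u(x)$, so if $\phi'$ is strictly decreasing, the comparison is strict on an edge with $u(x)\neq u(y)$, and hence so is the summed inequality. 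This is exactly what holds in the only case where strictness is used later, namely $\phi=(\cdot)^{1/q}$ with $q=p/(p-1)>1$ in Proposition~\ref{prop:criticalHardyNonPositive}. To be fair, the paper's own proof never addresses the strictness assertions at all, so your attempt is not weaker than the original; but the reason you give for them would not survive scrutiny.

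Two further points of comparison. In \ref{prop:MeanSgn}, your potential term $\frac{c(x)}{m(x)}\bigl(\p{(\phi\circ u)(x)}-((\phi'\circ u)(x))^{p-1}\p{u(x)}\bigr)$ is the correct algebra, whereas the paper's displayed inequality (and the sign hypothesis in the statement) omits the factor $\p{u(x)}$; so your computation is sound but does not literally match the stated condition. In \ref{prop:MeanWurzel}, both you and the paper multiply the inequality $\p{(\phi\circ u)(x)}\geq(\alpha u(x))^{p-1}((\phi'\circ u)(x))^{p-1}$ by $c(x)/m(x)$, a step that is only legitimate when $c(x)\geq 0$. This is harmless for the final assertion about $u^{1/q}$, since there $\phi(a)=q\,a\,\phi'(a)$ holds with equality and nothing gets multiplied by a possibly negative number, but for general $\phi$ and sign-changing $c$ it is a gap you have inherited faithfully from the paper.
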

\begin{proof}
	By the mean value theorem, for all $z,y \in X$, there exists $\xi\in [u(z)\wedge u(y), u(z)\vee u(y)]$ such that
	\[ \nabla_{z,y} (\phi\circ u) = \phi'(\xi)\nabla_{z,y}u.\]
	Thus, for this fixed $x\in X$,
	\[m(x)L(\phi\circ u)(x)= \sum_{y\in X}b(x,y) \p{\phi'(\xi)\nabla_{x,y}u}. \]
	Since $\phi' \geq 0$, we obtain
	\begin{align*}
	\ldots = \sum_{y: \nabla_{x,y}u> 0}b(x,y) (\phi'(\xi))^{p-1}(\nabla_{x,y}u)^{p-1}-\sum_{y: \nabla_{x,y}u< 0}b(x,y) (\phi'(\xi))^{p-1}(\nabla_{y,x}u)^{p-1}.
	\end{align*}
	Moreover, because of $-\phi''\geq 0$, we can estimate as follows
	\begin{multline*}
	\ldots \geq \sum_{y: \nabla_{x,y}u> 0}b(x,y) (\phi'(u(x)))^{p-1}(\nabla_{x,y}u)^{p-1} \\
	-\sum_{y: \nabla_{x,y}u< 0}b(x,y) (\phi'(u(x)))^{p-1}(\nabla_{y,x}u)^{p-1}\\
	= m(x)(\phi'(u(x)))^{p-1}Lu(x).
	\end{multline*}
	This shows \eqref{eq:MeanValue}.
	
	Ad~\ref{prop:MeanWurzel}: Using the extra assumption on $\phi$ and $u$, and \eqref{eq:MeanValue}, we get for all $x\in V$
	\begin{align*}
	H(\phi\circ u)(x)&\geq ((\phi'\circ u)(x))^{p-1} \left( Lu(x)+ \frac{c(x)}{m(x)}\cdot \left( \frac{(\phi \circ u)(x)}{(\phi'\circ u) (x)}\right)^{p-1} \right)\\
	&\geq ((\phi'\circ u)(x))^{p-1} \left( Lu(x)+ \frac{c(x)}{m(x)}\cdot \left( \alpha u(x)\right)^{p-1} \right).
	\end{align*}
	This shows \ref{prop:MeanWurzel}.
	
	Ad~\ref{prop:MeanSgn}: If we add and subtract instead of multiplying the missing potential part, we get using \eqref{eq:MeanValue},
	\[H (\phi\circ u) (x) \geq ((\phi'\circ u)(x))^{p-1} Hu(x)+ \frac{c(x)}{m(x)}\left(\p{(\phi\circ u)(x)}- ((\phi'\circ u)(x))^{p-1}\right).\]
	By the assumptions on $\phi, u$ and $c$, the right-hand side is non-negative.
	
	The last assertion follows from \ref{prop:MeanWurzel} by setting $\phi=(\cdot)^{1/q}$ and $\alpha=q$ for some $q\geq 1$.
\end{proof}

Note that in the case of $c\geq 0$ and $\phi=(\cdot)^{1/q}$ for some $q\geq 1$, it is sufficient to have $Hu \geq 0$ on $V$ instead of $H_{b,q^{p-1}\cdot c,p,m}u \geq 0$ on $V$. 

Moreover, a corresponding result for subharmonic functions goes as follows.
\begin{corollary}
	Let $p\geq 1$, $V\sse X$, and $u\in \FF(V)$. Let $\phi\in C^2([\inf_X u,\sup_X u],\RR)$ be an increasing and convex function, i.e., $\phi', \phi'' \geq 0$. 
	Then for all $x\in V$ we have
	\begin{align*}
	L (\phi\circ u) (x) \leq ((\phi'\circ u)(x))^{p-1} Lu(x).
	\end{align*}
	In particular, if $u$ is $L$-subharmonic on $V$, then $\phi\circ u$ is $L$-subharmonic on $V$.
\end{corollary}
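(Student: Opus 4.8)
The plan is to re-run the proof of Proposition~\ref{prop:MeanValue} essentially verbatim, reversing only the direction of the monotonicity estimate that convexity (rather than concavity) of $\phi$ now dictates. First I would apply the mean value theorem as there: for each $y\sim x$ there is $\xi=\xi(x,y)$ lying between $u(x)$ and $u(y)$---hence in the interval $[\inf_X u,\sup_X u]$ on which $\phi$ is defined---with $\nabla_{x,y}(\phi\circ u)=\phi'(\xi)\,\nabla_{x,y}u$. Since $\phi'\geq 0$, the factor $\phi'(\xi)$ may be pulled out of the power map $\p{\cdot}$, giving
\[ m(x)\,L(\phi\circ u)(x)=\sum_{y\in X}b(x,y)\,(\phi'(\xi))^{p-1}\,\p{\nabla_{x,y}u}, \]
and I would split this sum according to the sign of $\nabla_{x,y}u$, the vanishing terms contributing nothing to either side.

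The only substantive step is the sign bookkeeping, and it is exactly here that the convex case mirrors the concave one with reversed inequalities. On the terms with $\nabla_{x,y}u>0$ we have $u(x)>u(y)$, so $\xi\leq u(x)$ and, $\phi'$ being \emph{increasing}, $(\phi'(\xi))^{p-1}\leq (\phi'(u(x)))^{p-1}$; as these terms carry a positive sign, this bounds them from above. On the terms with $\nabla_{x,y}u<0$ we have $u(x)<u(y)$, so $\xi\geq u(x)$ and $(\phi'(\xi))^{p-1}\geq (\phi'(u(x)))^{p-1}$; as these enter with a negative sign, they too are bounded from above. Collecting both bounds and factoring out $(\phi'(u(x)))^{p-1}$ yields $m(x)L(\phi\circ u)(x)\leq m(x)(\phi'(u(x)))^{p-1}Lu(x)$, and dividing by $m(x)>0$ gives the claimed inequality.

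For the ``in particular'' statement I would simply note that $L$-subharmonicity of $u$ means $Lu\leq 0$ on $V$, while $\phi'\geq 0$ forces $((\phi'\circ u)(x))^{p-1}\geq 0$; hence the right-hand side of the inequality is non-positive on $V$, so $L(\phi\circ u)\leq 0$ there. I do not anticipate any real obstacle: the argument is the exact dual of Proposition~\ref{prop:MeanValue}, and the single point demanding care is getting the two monotonicity comparisons---together with the signs of the two partial sums they multiply---to both point in the upper-bound direction.
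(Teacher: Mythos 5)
Your proof is correct and is exactly what the paper intends: its entire proof of this corollary reads ``Mimic the proof of Proposition~\ref{prop:MeanValue},'' and you have carried out that mimicry faithfully, with the sign bookkeeping (monotone increasing $\phi'$ reversing both bounds in the split over $\nabla_{x,y}u>0$ and $\nabla_{x,y}u<0$) done correctly. The ``in particular'' conclusion also follows as you state, since subharmonicity here means $Lu\leq 0$ on $V$.
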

\begin{proof}
Mimic the proof of Proposition~\ref{prop:MeanValue}.
\end{proof}

Next, we want to show a similar result as Proposition~\ref{prop:MeanValue} for a not necessarily differentiable function $\phi$. As a downside, we will need to assume that the potential is non-negative.

But first, we start with a result which seems to be true in every quasilinear potential theory. It states that the set of non-negative supersolutions is downwards directed, i.e., it is $\wedge$-stable. 

\begin{lemma}\label{lem:infSuperharmonic}
	Let $p\geq 1$, $V\sse X$ be connected and let $S$ be a family of functions $s\in \FF(V)$ which are non-negative on $X$ and $Hs\geq f \p{s}$ on $V$ for some $f\in C(X)$. Then, the pointwise infimum $u$ of functions in $S$ is also in $\FF(V)$, non-negative on $X$, and $Hu\geq f \p{u}$ on $V$. Furthermore, $u$ is either strictly positive or equal to zero on $V\cup \partial V$.
\end{lemma}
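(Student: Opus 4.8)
The plan is to prove the statement about the pointwise infimum $u = \inf_{s\in S} s$ by a combination of a local comparison argument at a single vertex and a Harnack-type (strict positivity) dichotomy. First I would verify that $u$ is non-negative on $X$, which is immediate since each $s \geq 0$ and the infimum of non-negative numbers is non-negative. The membership $u \in \FF(V)$ is a summability condition at each $x \in V$; since $\abs{\nabla_{x,y}u} \leq \abs{u(x)} + \abs{u(y)}$ is controlled once we know $u$ is finite (bounded above by any fixed $s \in S$ at each vertex), I expect $u \in \FF(V)$ to follow from the fact that $u \leq s_0$ for a chosen $s_0 \in S$ and that a single nonnegative $s_0 \in \FF(V)$ already provides the needed local summability; the details are routine bookkeeping with the definition of $\FF(V)$.

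The heart of the argument is the supersolution inequality $Hu \geq f\p{u}$ on $V$. I would fix $x \in V$ and argue pointwise. The key observation is that because the operator $H$ acting at $x$ depends only on the values $u(x)$ and $u(y)$ for $y \sim x$ (a finite or at least summable collection of neighbours), one should reduce to comparing $u$ against a single well-chosen supersolution. The standard trick in quasilinear potential theory is: given $\epsilon > 0$, choose $s_\epsilon \in S$ with $s_\epsilon(x) \leq u(x) + \epsilon$ (possible by definition of the infimum at $x$); then since $s_\epsilon \geq u$ everywhere and $s_\epsilon(x)$ is close to $u(x)$, one monitors how $H s_\epsilon(x)$ compares to $Hu(x)$. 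Concretely, I would use monotonicity of $a \mapsto \p{\nabla_{x,y}f}$ in the relevant direction: lowering the central value $f(x)$ while keeping the neighbours fixed decreases each term $\p{\nabla_{x,y}f} = \p{f(x)-f(y)}$. Replacing $s_\epsilon(y)$ by the smaller $u(y)$ at the neighbours only further decreases $Ls_\epsilon(x)$, so one obtains, after accounting for the potential term and the factor $\p{s_\epsilon(x)}$ versus $\p{u(x)}$, the inequality $Hu(x) \geq f(x)\p{u(x)}$ in the limit $\epsilon \to 0$, using continuity of $\p{\cdot}$ and of $a\mapsto \p{a - u(y)}$. The main obstacle I anticipate is handling the infinite neighbour sums (when the graph is not locally finite on $V$) rigorously: one must justify interchanging limits/infima with the sum over $y$, which requires a dominated-convergence-style argument using the uniform bound $u \leq s_0$ and the summability built into $s_0 \in \FF(V)$.

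Finally, for the dichotomy that $u$ is either strictly positive or identically zero on $V \cup \partial V$, I would invoke the Harnack inequality mentioned in the excerpt (from \cite{F:GSR}), which asserts that a non-negative supersolution that is positive somewhere is strictly positive on the connected set. Having just established that $u$ is a non-negative supersolution ($Hu \geq f\p{u}$) on the connected set $V$, if $u(x_0) > 0$ for some $x_0 \in V$ then Harnack propagates strict positivity across $V$ by connectedness, and across to $\partial V$ since boundary vertices are neighbours of interior vertices carrying positive weight. Conversely, if $u$ vanishes at some interior point, the same Harnack principle (applied in contrapositive form) forces $u \equiv 0$ on $V \cup \partial V$. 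I expect this last part to be the cleanest, as it is a direct citation; the genuine work lies in the pointwise supersolution estimate and its limit justification described above.
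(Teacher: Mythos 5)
Your overall architecture for the main inequality --- compare $u$ at a single vertex $x$ with a near-minimising $s_\epsilon\in S$, use monotonicity of $\p{\cdot}$, and pass to the limit $\epsilon\to 0$ by dominated convergence with a dominating function coming from $u\le s_0\in\FF(V)$ --- is exactly the paper's argument. However, your key monotonicity step is stated with the wrong sign, and as written the chain of inequalities would fail. Replacing the neighbour values $s_\epsilon(y)$ by the smaller $u(y)$ \emph{increases} each term (since $a\mapsto\p{s_\epsilon(x)-a}$ is non-increasing), and this increase is precisely what the proof needs:
\[
\sum_{y\in X} b(x,y)\p{s_\epsilon(x)-u(y)}\;\ge\;\sum_{y\in X} b(x,y)\p{\nabla_{x,y}s_\epsilon}\;=\;m(x)\,Ls_\epsilon(x)\;\ge\;m(x)f(x)\p{s_\epsilon(x)}-c(x)\p{s_\epsilon(x)}.
\]
Likewise, you cannot ``lower the central value'' from $s_\epsilon(x)$ to $u(x)$ by monotonicity --- that inequality points the wrong way. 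The centre must be kept at $s_\epsilon(x)=u(x)+\epsilon'$, $0\le\epsilon'\le\epsilon$, and only in the limit does the left-hand side converge to $m(x)Lu(x)$. It is essential that the limit is taken in the expression displayed above, whose $y$-dependence involves only the \emph{fixed} function $u$, so that a dominating function independent of $\epsilon$ exists (namely $2^{p-1}\bigl((u(x)+1)^{p-1}+u(y)^{p-1}\bigr)$, summable against $b(x,y)$ because $u\le s_0$, $s_0\in\FF(V)$, and $\deg(x)<\infty$); a direct comparison of $Lu(x)$ with $Ls_\epsilon(x)$ would require dominating $\abs{\nabla_{x,y}s_\epsilon}^{p-1}$ uniformly in $\epsilon$, which is not available since the family $(s_\epsilon)$ admits no upper bound. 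You correctly identified the dominated-convergence ingredient, but the directional slip sits exactly at the step the whole proof hinges on.

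For the dichotomy, the paper does not invoke Harnack at all, and your Harnack route has a genuine problem at the boundary. The paper argues elementarily: if $u(x)=0$ at some $x\in V$, then $m(x)Hu(x)=-\sum_{y}b(x,y)\p{u(y)}\le 0$, while the just-proved inequality gives $Hu(x)\ge f(x)\p{u(x)}=0$; hence $u$ vanishes at every neighbour of $x$, and connectedness of $V$ propagates the vanishing through $V$ and onto $\partial V$. Harnack (applied to the operator with potential $c-fm$) does yield the dichotomy on $V$, but it says nothing at vertices of $\partial V$, where no supersolution property holds: the vanishing on $\partial V$ still requires the elementary neighbour argument above, and your claim that strict positivity ``propagates across to $\partial V$'' is simply false. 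The paper's own remark supplies the counterexample: $V=\NN$, $\partial V=\set{0}$, $G_1\equiv 1$ on $\NN$ and $G_1(0)=0$, a positive superharmonic function on $V$ vanishing at the boundary. Accordingly, the lemma must be read as: either $u>0$ on $V$, or $u\equiv 0$ on $V\cup\partial V$; a proof cannot establish positivity on $\partial V$ because it does not hold.
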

\begin{proof}	
	It follows immediately from the definitions that $u\in C(X)$ defined via $u(x)=\inf_{s\in S}s(x)$ is in $\FF(V)$ and non-negative on $X$. 

Let $s\in S$ such that $s(x)=u(x)+\epsilon$ for some $x\in V$ and $\epsilon >0$. Then, for all $y\in X$
\[\nabla_{x,y}u +\epsilon = s(x)-u(y)\geq \nabla_{x,y}s.\]
Thus, we deduce that 
\begin{align*}
	\frac{1}{m(x)}\sum_{y\in X}b(x,y)\p{\nabla_{x,y}u +\epsilon}\geq Ls(x) 
	&\geq (f(x)-c(x)/m(x))\p{s(x)}\\
	&= (f(x)-c(x)/m(x))\p{u(x)+\epsilon}.
\end{align*}
The left-hand side is finite since
\begin{align*}
	\sum_{y\in X}&b(x,y)\abs{\nabla_{x,y}u +\epsilon}^{p-1}\\&\leq 2^{p-1}\Bigl(\abs{u(x)+\epsilon}^{p-1}\deg(x)+ \sum_{y\in X}b(x,y)\abs{u(y)}^{p-1} \Bigr)<\infty.
\end{align*}
Thus, we have dominated convergence and can take the limit $\epsilon \to 0$. This results in $Hu\geq f\p{u}$. 

We show that $u(x)=0$ yields $u=0$ in $V\cup \partial V$. Indeed, note that
\[m(x)Hu(x)=- \sum_{y\in X}b(x,y)\p{u(y)}\leq 0,\]
which is strictly less than $0$ if there exists $z\sim x$ such that $u(z)>0$. But this would give $Hu(x)<0$, which is a contradiction by the first part of the proof. Hence, for all $z\sim x$ we have $u(z)=0$. 
\end{proof}

Next, we show a discrete versions of \cite[Theorem~7.5]{HKM} repectively \cite[Section~9.8]{Bjoern}. Note that the following proposition is valid only for non-negative potentials.

\begin{proposition}\label{prop:NewSuperharmonicFromOldc>0}
	Let $c\geq 0$ on $V\sse X$ and $p\geq 1$. Let $u\in \FF(V)$ be positive on $X$ and  superharmonic on $V$. Let further $\phi\colon (0,\infty)\to [0,\infty)$ be an increasing and concave function. Then $\phi\circ u$ is also superharmonic in $V$.
	
	In particular, the functions $u^{1/q}$, $q\geq 1$, are superharmonic on $V$. 
\end{proposition}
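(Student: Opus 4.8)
The plan is to reduce Proposition~\ref{prop:NewSuperharmonicFromOldc>0} to the differentiable case already handled in Proposition~\ref{prop:MeanValue}~\ref{prop:MeanWurzel}, together with the $\wedge$-stability from Lemma~\ref{lem:infSuperharmonic}. The key point is that an arbitrary increasing concave function $\phi\colon (0,\infty)\to[0,\infty)$ can be written as a pointwise infimum of its affine tangent majorants. Concretely, for each $a>0$ the tangent line (or more carefully a supporting affine function, using one-sided derivatives where $\phi$ fails to be differentiable) $\ell_a(t):=\phi(a)+\phi'_+(a)(t-a)$ satisfies $\phi\le \ell_a$ everywhere by concavity, and $\phi(t)=\inf_{a>0}\ell_a(t)$. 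So I would first establish this envelope representation, being mild about differentiability by using right derivatives $\phi'_+$, which exist everywhere for a concave function.

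Next I would verify that each affine piece $\ell_a$, precomposed with $u$, is superharmonic on $V$. Since $\ell_a$ is affine with nonnegative slope $s:=\phi'_+(a)\ge 0$ and nonnegative intercept-value (because $\phi\ge 0$ forces $\ell_a\ge 0$ at the relevant points), I can write $\ell_a\circ u = s\cdot u + (\phi(a)-s\,a)$. The subtlety is that $H$ is not linear for $p\ne 2$, so I cannot simply say $H(\ell_a\circ u)=sH(u)+(\text{const term})$. Instead I would invoke Proposition~\ref{prop:MeanValue}: $\ell_a$ is increasing, concave (indeed affine, so $\phi''=0$), and $C^2$, hence \eqref{eq:MeanValue} gives $L(\ell_a\circ u)(x)\ge ((\ell_a'\circ u)(x))^{p-1}Lu(x)=s^{p-1}Lu(x)$ at each $x\in V$. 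For the potential part I use $c\ge 0$: since $\ell_a\circ u = s\,u + (\phi(a)-sa)\ge s\,u$ pointwise (as the constant $\phi(a)-sa=\ell_a(0)\ge 0$ by concavity and $\phi\ge 0$), and the map $t\mapsto \p{t}$ is monotone, we get $c(x)\p{(\ell_a\circ u)(x)}\ge c(x)\,s^{p-1}\p{u(x)}$. Combining, $H(\ell_a\circ u)(x)\ge s^{p-1}Hu(x)\ge 0$, so each $\ell_a\circ u$ is superharmonic.

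Finally I would assemble the pieces. The family $S:=\{\ell_a\circ u : a>0\}$ consists of nonnegative superharmonic functions in $\FF(V)$, and its pointwise infimum is exactly $\phi\circ u$ by the envelope representation. Lemma~\ref{lem:infSuperharmonic} (with $f\equiv 0$) then yields that $\phi\circ u\in\FF(V)$ is nonnegative and superharmonic on $V$. The special case $\phi=(\cdot)^{1/q}$ for $q\ge 1$ is an increasing concave function $(0,\infty)\to(0,\infty)$, so $u^{1/q}$ is superharmonic.

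The main obstacle I anticipate is handling the nonlinearity of $H$ cleanly: unlike the $p=2$ case, affine precomposition does not commute with $H$, so the argument must route every affine piece through the mean value estimate \eqref{eq:MeanValue} rather than through linearity, and must carefully use $c\ge 0$ together with monotonicity of $t\mapsto\p{t}$ to control the potential term. A secondary technical care point is the membership $\phi\circ u\in\FF(V)$ and the integrability needed to apply Lemma~\ref{lem:infSuperharmonic}; this is where the nonnegativity of $\phi$ and the bound $\phi(u)\le s\,u+\text{const}$ on neighbours keep the relevant sums finite, so that the infimum lemma genuinely applies.
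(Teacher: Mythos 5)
Your proposal is correct and follows essentially the same route as the paper's proof: represent the increasing concave $\phi$ as a pointwise infimum of affine majorants $t\mapsto \alpha t+\beta$ with $\alpha,\beta\geq 0$, check that each composition $\alpha u+\beta$ is superharmonic (using $c\geq 0$, $u\geq 0$, and monotonicity of $t\mapsto\p{t}$), and conclude with the $\wedge$-stability of supersolutions, Lemma~\ref{lem:infSuperharmonic}. The only cosmetic difference is that you route the Laplacian estimate for each affine piece through Proposition~\ref{prop:MeanValue}, whereas this step is immediate since constants cancel under $\nabla$, giving $L(\alpha u+\beta)=\alpha^{p-1}Lu$ exactly.
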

\begin{proof}
	By the Harnack inequality, see \cite{F:GSR}, $u$ is strictly positive on $V$. If $\phi$ is concave and increasing, then we have the following identity for all $t\in (0,\infty)$
	\[\phi(t)=\inf\set{ \alpha t+\beta\colon \alpha,\beta\in \RR, \alpha \geq 0, \alpha \tau +\beta \geq \phi(\tau) \text{ for all } \tau\in (0,\infty) }.\]
	Since $\phi \geq 0$, we have $\beta \geq 0$. Moreover, because of $c\geq 0$ and $u\geq 0$ on $V$, we get $H(\alpha u +\beta)\geq 0$ on $V$. Since $\phi\circ u$ is the infimum of a set of non-negative superharmonic functions it is by Lemma~\ref{lem:infSuperharmonic} also non-negative and superharmonic. 
\end{proof}
\begin{remark}
We note that in the case of $c=0$ on $V$, the same proof holds for an increasing and convex function $\phi\colon (0,\infty)\to \RR$. Then, in the definition of such a function via supporting lines the number $\beta$ might be negative. In particular, $\log u$ is superharmonic on $V$ with respect to $L$ if $u>0 $ is superharmonic on $V$ with respect to $L$.
\end{remark}

\subsection{Coarea Formula}\label{sec:coarea}
Here we follow the ideas in \cite[Subsection~2.3]{KePiPo2} (or \cite[Lemma~9.3.5]{KPP20}) for $p=2$ and extend them to $p\geq 1$. Additionally, we weaken the assumptions from the linear $(p=2)$-case slightly. This generalised coarea formula can also be seen as the discrete analogue of the coarea formula in \cite{Versano}. 

We will also use the following boundary notation: For $V\sse X$ we denote
\[\tilde{\partial} V:=\set{(x,y)\in V\times X\setminus V : b(x,y)>0},\]
i.e., $\tilde{\partial}V$ contains all directed edges from the interior boundary of  $V$ to the exterior boundary $\partial V$. 

Note that a strictly positive proper function on an infinite set cannot take both simultaneously, its maximum and its minimum since either $0$ or $\infty$ is an accumulation point. Moreover, note that on a finite set any strictly positive function on that set is proper.

\begin{lemma}[Stokes-type formula]\label{lem:co-area}
	Let $p\geq 1$, and $V\sse X$ be non-empty. Let $0\leq u\in \FF(V)$ be non-constant on $V\cup \partial V$ and almost proper on $X$. Let the function $g\colon (\inf_{X} u, \sup_{X} u)\to [0,\infty]$ be defined via
	\[g(t):=\sum_{\substack{x,y\in X\\ u(y)< t \leq u(x)}} b(x,y)(\nabla_{x,y}u)^{p-1}.\]
	Then, for any $t_1, t_2\in (\inf_X u, \sup_X u)$ such that $t_1\leq t_2$, the set \[W_{t_1,t_2}:=\{ x\in W :  t_1< u(x)\leq t_2\}\] is finite for any $W\sse X$, and 
	\begin{align}\label{eq:gt2}	
		g(t_1)-g(t_2)= \sum_{x\in V_{t_1,t_2}}Lu(x)m(x)+\sum_{(x,y)\in \tilde{\partial} ((X\setminus  V)_{t_1,t_2})}b(x,y)(\nabla_{x,y}u)^{p-1},
	\end{align}
	where both sides may take the value $+\infty$. 
	
	In the following assume that $u=0$ on $X\setminus V$. Furthermore, in the case of  infinite $V$, assume also 
	\begin{align}\label{eq:sup<}	
\sup_{x,y\in V, x\sim y}(u(x)-u(y))<\sup_{x\in V} u(x) - \inf_{x\in V} u(x),
	\end{align}
then $g((\inf_X u, \sup_X u))\sse (0,\infty)$.

Moreover, assume additionally that  $Lu\in \ell^1(V,m)$, and
\begin{enumerate}[label=(\alph*)]
\item\label{lem:coarea1}  $u$ takes its maximum on $V$, or there exists $S > 0$ such that  for all $x\in V$ with $u(x)> S$, we have $Lu(x) \leq 0$; and
\item\label{lem:coarea2}  $u$ takes its minimum on $V$, or there exists $I > 0$ such that  for all $x\in V$ with $u(x)< I$, we have $Lu(x) \geq 0$.
\end{enumerate} 
Then, also $g\asymp 1$ on $X$.	
\end{lemma}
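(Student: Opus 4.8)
The plan is to reduce the claim $g \asymp 1$ to controlling the two one-sided limits of $g$ at the ends of its domain $(\inf_X u, \sup_X u)$, since the interior is easy. First I would simplify the Stokes identity \eqref{eq:gt2}: because $u=0$ on $X\setminus V$ while every $t$ in the domain satisfies $t>\inf_X u$, and since $\inf_X u=0$ whenever $X\setminus V\neq\emptyset$, the set $(X\setminus V)_{t_1,t_2}$ is empty for all admissible $t_1\le t_2$ (and trivially so when $V=X$). Hence the boundary sum drops out and
\[
 g(t_1)-g(t_2)=\sum_{x\in V_{t_1,t_2}}Lu(x)m(x),\qquad \inf_X u<t_1\le t_2<\sup_X u .
\]
As $Lu\in\ell^1(V,m)$, the right-hand side is bounded in absolute value by $A:=\norm{Lu}_{\ell^1(V,m)}<\infty$, so $g$ has globally bounded oscillation; together with the finiteness of $g$ from the previous part this already yields $\sup g\le g(t_0)+A<\infty$ for any fixed $t_0$, the desired upper bound. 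Letting $t_1\downarrow\inf_X u$ or $t_2\uparrow\sup_X u$, the sets $V_{t_1,t_2}$ increase and dominated convergence shows the limits $g((\inf_X u)^+)$ and $g((\sup_X u)^-)$ exist and are finite.

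For the interior I would note that for any compact $[a,b]\subset(\inf_X u,\sup_X u)$ the almost-properness of $u$ makes $u^{-1}([a,b])$ finite, so the simplified identity shows $g$ is piecewise constant with finitely many pieces on $[a,b]$; since $g>0$ on the whole domain by the previous part, $\inf_{[a,b]}g>0$. Combined with the existence of the endpoint limits, a routine subsequence argument then reduces the remaining bound $\inf g>0$ to proving that both endpoint limits are \emph{strictly} positive.

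The crux, and the only place hypotheses \ref{lem:coarea1} and \ref{lem:coarea2} are used, is this endpoint positivity. For the top $M:=\sup_X u$: if $M<\infty$, almost-properness forces the supremum to be attained on a finite level set $U_M=\set{u=M}$, so for $t$ just below $M$ one has $\set{u\ge t}=U_M$ and $g(t)=\sum_{x\in U_M,\,y\notin U_M}b(x,y)(M-u(y))^{p-1}$ is a positive constant (positivity by connectedness, as $u$ is non-constant); if $M=\infty$ the maximum is not attained, so \ref{lem:coarea1} must hold in its second form with some $S>0$, and the identity shows $g$ is non-decreasing on $[S,\infty)$, whence $g(M^-)\ge g(t_0)>0$. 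Symmetrically, for the bottom $\beta:=\inf_X u$: if $\beta>0$ then $V=X$ and the infimum is attained at an isolated finite level set, giving eventual constancy; if $\beta=0$ is attained (in particular whenever $X\setminus V\neq\emptyset$), then approximating by finite families of edges joining $\set{u=0}$ to $\set{u>0}$ — which exist by connectedness — yields $g(0^+)\ge\sum_{u(y)=0<u(x)}b(x,y)u(x)^{p-1}>0$; and if $\beta=0$ is not attained, then necessarily $V=X$ with the minimum not attained, so \ref{lem:coarea2} holds in its second form with some $I>0$, and the identity shows $g$ is non-increasing on $(0,I]$, giving $g(0^+)\ge g(t_0)>0$.

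The main obstacle I anticipate is exactly this endpoint bookkeeping: the monotonicity trick from \ref{lem:coarea1}/\ref{lem:coarea2} only forces a lower bound when the extremal value is \emph{not} attained (the genuinely unbounded end), whereas the attained cases require the separate eventual-constancy and edge-approximation arguments; the delicate point is to check that the case distinctions are exhaustive and that, in each case, the ``or'' in the hypotheses automatically selects the branch one needs. Once both endpoint limits are shown positive, $\inf g>0$ follows, and with the upper bound this is precisely $g\asymp 1$.
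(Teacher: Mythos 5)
The portion of the lemma you actually argue --- the final claim $g\asymp 1$ --- is handled correctly, and essentially by the paper's own method: the upper bound follows from $\abs{g(t_1)-g(t_2)}\le \norm{Lu}_{\ell^1(V,m)}$ (via \eqref{eq:gt2} with vanishing boundary term) together with finiteness of $g$ at a single point; the lower bound combines the finitely-many-values argument on compact subintervals (almost properness) with eventual constancy of $g$ near an attained extremum and monotonicity of $g$ near a non-attained one, the latter coming from the second branches of \ref{lem:coarea1} and \ref{lem:coarea2}. Your case analysis is in fact more explicit than the paper's (the single-edge lower bound when the infimum $0$ is attained, which bypasses \ref{lem:coarea2} there, is a nice touch), and your observation that $(X\setminus V)_{t_1,t_2}=\emptyset$ once $u=0$ off $V$, so that the boundary sum in \eqref{eq:gt2} drops, matches the paper's corresponding remark.

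The genuine gap is one of coverage: the statement is the entire lemma, and you prove only its last assertion while assuming the other three. The finiteness of the sets $W_{t_1,t_2}$, the identity \eqref{eq:gt2}, and above all $g((\inf_X u,\sup_X u))\sse(0,\infty)$ are repeatedly invoked as ``the previous part'', but they are part of what must be proved, and they account for the bulk of the paper's proof. Concretely missing are: (i) the derivation of \eqref{eq:gt2}, which requires applying Green's formula (Lemma~\ref{lem:GreensFormula}) to $1_{V_{t_1,t_2}}\in C_c(V)$ and matching the resulting boundary sum with the telescoped difference $g(t_1)-g(t_2)$; (ii) positivity of $g$, which uses connectedness of $X$ and non-constancy of $u$; and (iii) finiteness of $g$, which is exactly where hypothesis \eqref{eq:sup<} enters: one chooses $t_1<t_2$ with $t_2-t_1>\sup_{x,y\in V,\,x\sim y}(u(x)-u(y))$, so that every edge leaving $X_{t_2}$ ends in the finite set $V_{t_1,t_2}$, giving $g(t_2)<\infty$, after which \eqref{eq:gt2} propagates finiteness to all $t$. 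Tellingly, \eqref{eq:sup<} never appears in your proposal; without it $g$ can be infinite on the whole interval, so both your global upper bound and your appeals to ``finiteness of $g$ from the previous part'' would collapse. As written, your argument is a correct proof of the implication ``first three assertions $\Rightarrow g\asymp 1$'', not of the lemma itself.
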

\begin{proof}
For $t>0$, define for any $W\sse X$,  $W_{t}:=\{x\in W : u(x)>t\}.$ Let
$t_{1},t_{2}\in (\inf_X u,\sup_X u)$ with $t_{1}\le t_{2}$. Then, $W_{t_1,t_2}=W_{t_{1}}\setminus W_{t_{2}}$. Since $u$ is almost proper on $X$, the set $W_{t_1,t_2}$ is finite for any $W\sse X$. Therefore, the
characteristic function $1_{V_{t_1,t_2}}$ of $V_{t_1,t_2}$ is in $C_{c}(V)$.

Let us abbreviate the notation further by writing
\[b_{u}(x,y):=b(x,y)\p{\nabla_{x,y}u}, \qquad x,y\in X.\]

 Since $u\in \FF(V)$,  we can apply Green's formula, Lemma~\ref{lem:GreensFormula}, which yields
\begin{align*}
\sum_{x\in V_{t_1,t_2}}Lu(x)m(x)     
&=\sum_{x\in X}1_{V_{t_1,t_2}}(x)Lu(x)m(x) \\
    &=\frac{1}{2}\sum_{x,y \in X}b_u(x,y) \nabla_{x,y} 1_{V_{t_1,t_2}}
    =\sum_{(x,y)\in \tilde{\partial} V_{t_1,t_2}} b_u(x,y).
\end{align*}

Notice that for $t_1\leq t_2$, both $g(t_1)$ and $g(t_2)$ share the sum over the set 
\[\set{(x,y)\in X\times X : u(y)< t_1 \leq t_2\leq u(x)}.\]
Thus, we conclude
\begin{align*}
g(t_1)-g(t_2)&= \sum_{\substack{x,y\in X\\ u(y)< t_1 \leq u(x)<t_2}} b_u(x,y)-\sum_{\substack{x,y\in X \\ t_1\leq u(y)< t_2 \leq u(x)}}b_u(x,y) \\
&= \sum_{\substack{x,y\in X\\ u(y)< t_1 \leq u(x)<t_2}} b_u(x,y)+\sum_{\substack{x,y\in X\\ t_1\leq u(x)< t_2 \leq u(y)}}b_u(x,y) \\
&= \sum_{(x,y)\in \tilde{\partial} V_{t_1,t_2}}b_u(x,y)+ \sum_{(x,y)\in \tilde{\partial} ((X\setminus V)_{t_1,t_2})}b_u(x,y),
\end{align*}
together with the observation before, this shows \eqref{eq:gt2}.

We turn to $g>0$: Since $u$ is non-constant and $X$ is connected, $\tilde{\partial} X_{t}$ is non-empty for all $t\in (\inf_{X} u, \sup_{X} u)$, i.e., $g>0$ on $(\inf_{X} u, \sup_{X} u)$.

Now assume that $u\in C(V)$, and we show that $g< \infty$: If $V$ is finite, then $u(x)>t$ only finitely many times and only for $x\in V$. Since $u\in F(V)$, $g<\infty$. If $V$ is infinite, then firstly note that $u\in C(V)$ implies $\tilde{\partial} ((X\setminus V)_{t_1,t_2})=\emptyset$. By using \eqref{eq:gt2}, and since $V_{t_1,t_2}$ is finite for all $t_{1},t_{2}\in (\inf_X u,\sup_X u)$, we have $g(t_{1})<\infty$  if and only if $g(t_{2})<\infty$. Secondly, note that by Inequality~\eqref{eq:sup<}, there are $t_{1},t_{2}\in (\inf_{X} u,\sup_{X} u)$ such that $t_{2}-t_{1}> \sup_{x,y\in V, x\sim y}(u(x)-u(y))$. For the choice of these $t_{1},t_{2}$, there is no vertex in $V_{t_{2}}=X_{t_2}$ that is connected to a vertex outside of $V_{t_{1}}=X_{t_2}$. Hence, $\tilde{\partial} V_{t_{2}}= \tilde{\partial} V_{t_{2}}\cap\tilde{\partial} (X\setminus V_{t_1,t_2})$.
 
 Moreover, we observe that for any $W\subseteq X$, we have $(x,y)\in \tilde{\partial} W$ if and only if $(y,x)\in \tilde{\partial} (X\setminus W)$.  Hence, we have by the considerations before and the definition of $b_{u}$ and the assumption $u\in F(V)$ that 
\begin{align*}
g(t_{2})= \sum_{(x,y)\in \tilde{\partial} V_{t_{2}}\cap\tilde{\partial} (X\setminus
V_{t_1,t_2})}\hspace{-.3cm}b_{u}(x,y)\leq \sum_{(x,y)\in \tilde{\partial} V_{t_1,t_2}}b(x,y)\abs{\nabla_{x,y}u}^{p-1}<\infty.
\end{align*}
Thus, $g< \infty$ on $(\inf_X u,\sup_X u)$.

Next, we show the boundedness of $g$ under certain additional assumptions. The lower bound follows from  \ref{lem:coarea1}, \ref{lem:coarea2} and \eqref{eq:gt2}. Here are the details: Indeed, assume it takes its maximum on $V$, then $u^{-1}([a,\max_V u])$ is finite for all $a> 0$ by the almost properness of $u$. Hence, by \eqref{eq:gt2}, $g$ changes its value only finitely many times, and thus $g\asymp 1$ in $[a,\max_V u]$. A same argument holds, if $u$ takes its minimum instead. If $u$ does not have a maximum, then $g$ might converge to zero as $t$ goes to $\sup_V u=\sup_X u$. By  \ref{lem:coarea1} and \eqref{eq:gt2}, $g$ is increasing in a neighbourhood of $\sup_V u=\sup_X u$, and thus cannot converge to zero. By using \ref{lem:coarea2} instead, a similar argument applies, when $u$ does not have a minimum.

The upper bound follows if we additionally assume that $Lu\in \ell^1(X,m)$. Then, \eqref{eq:gt2} and \ref{lem:coarea1} yields for all $t_1\leq t_2$ with $t_2\geq S$ if $u$ does not  have a maximum and $t_2=\max_V u$ in the other case,
	\begin{align*}
		g(t_1)=g(t_2) +\sum_{x\in V_{t_1,t_2}}Lu(x)m(x)\leq g(t_2)+ \sum_{x\in V}\abs{Lu(x)}m(x) <\infty.
	\end{align*}
	The calculation for \ref{lem:coarea2} is similar.
\end{proof}
We remark that many of the latter additional assumptions in Lemma~\ref{lem:co-area} are satisfied if $Lu \in C_c(X)$.

Furthermore, if $u>0$ on $V$, then Inequality~\eqref{eq:sup<} is equivalent to
\[\sup_{x,y\in V,\, x\sim y}\frac{u(x)}{u(y)}<\sup_{x,y\in V }\frac{u(x)}{u(y)},\]
which is satisfied by proper functions of bounded oscillation on infinite $V$.

Now, we prove a formula to translate calculations and estimates of infinite sums over graphs to one dimensional integrals -- the so-called coarea formula. This formula will be of fundamental importance in the proof of Theorem~\ref{thm:mainresultc<0}.
\begin{proposition}[Coarea formula]\label{prop:coarea}
Let $p\geq 1$, and $0\lneq u\in C(V)$. Let the function  $f\colon (\inf u,\sup u) \to [0,\infty)$ be Riemann integrable. Then
\begin{align}\label{eq:coarea}
\frac{1}{2}\sum_{x,y\in X}b(x,y)\p{\nabla_{x,y}u}\int^{u(x)}_{u(y)}f(t)\,\dd t = \int_{\inf u}^{\sup u}f(t)g(t)\,\dd t,
\end{align}
 where both sides can take the value $+\infty$, and  $g\colon(\inf_X u,\sup_X u)\to[0,\infty]$ is given by
\begin{align*}
g(t):=    \sum_{\substack{x,y\in X\\u(y)< t\leq u(x)}}b(x,y)(\nabla_{x,y}u)^{p-1}.
\end{align*}
Assume further that $Lu\in  \ell^1(V,m)$, and
\begin{enumerate}[label=(\alph*)]
  \item\label{prop:coareaA} $u$ is almost proper and non-constant on $V\cup\partial V$,
  \item\label{prop:coareaB} $\sup_{x,y\in V, x\sim y}(u(x)-u(y))<\sup_{x\in V} u(x) - \inf_{x\in V} u(x)$ in the case of infinite $V$,
  \item\label{prop:coareaC}  $u$ takes its maximum, or there exists $S > 0$ such that for all $x\in V$ with $u(x)> S$, we have $Lu(x) \leq 0$, and
\item\label{prop:coareaD}  $u$ takes its minimum, or there exists $I > 0$ such that for all $x\in V$ with $u(x)< I$, we have $Lu(x) \geq 0$.
\end{enumerate}
Then, $g\asymp 1$.
\end{proposition}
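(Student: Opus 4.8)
The plan is to treat the two assertions separately: the identity \eqref{eq:coarea} is a purely combinatorial layer-cake computation resting on Tonelli's theorem and the symmetry of $b$, whereas the bound $g\asymp 1$ will be read off directly from the Stokes-type formula, Lemma~\ref{lem:co-area}, once I check that its hypotheses are met.

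For the identity, the first step is to observe that every summand on the left-hand side is non-negative. Indeed, fix an ordered pair $(x,y)$: if $u(x)=u(y)$ the integral $\int_{u(y)}^{u(x)}f\,\dd t$ vanishes; if $u(x)>u(y)$ then $\p{\nabla_{x,y}u}=(u(x)-u(y))^{p-1}\geq 0$ and $\int_{u(y)}^{u(x)}f\,\dd t\geq 0$; and if $u(x)<u(y)$ both factors change sign (using that $\p{\cdot}$ is odd), so the product is again non-negative. With non-negativity secured I can invoke Tonelli's theorem to interchange the countable sum with the integral on the right-hand side. For a pair with $u(x)>u(y)$ the level condition $u(y)<t\leq u(x)$ describes exactly the interval $(u(y),u(x)]$, and there $(\nabla_{x,y}u)^{p-1}=\p{\nabla_{x,y}u}$, so that $\int_{\inf u}^{\sup u}f(t)\mathbf{1}_{\{u(y)<t\leq u(x)\}}\,\dd t=\int_{u(y)}^{u(x)}f\,\dd t$. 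Hence the right-hand side equals $\sum_{u(x)>u(y)}b(x,y)(u(x)-u(y))^{p-1}\int_{u(y)}^{u(x)}f\,\dd t$. It then remains to match this with the left-hand side: by $b(x,y)=b(y,x)$, the summand of a pair with $u(x)<u(y)$ equals, after swapping $x$ and $y$, that of the pair $(y,x)$ with $u(y)>u(x)$, while pairs with $u(x)=u(y)$ contribute zero. Summing over all ordered pairs therefore double-counts each contribution from $\{u(x)>u(y)\}$, and the prefactor $1/2$ restores the equality, both sides being simultaneously finite or $+\infty$.

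For $g\asymp 1$ I plan to simply align the hypotheses with those of Lemma~\ref{lem:co-area}. Since $u\in C(V)$ is extended by zero, one has $u=0$ on $X\setminus V\supseteq\partial V$; in particular almost properness on $V\cup\partial V$ coincides with almost properness on $X$, because $u^{-1}(I)\cap(V\cup\partial V)=u^{-1}(I)\cap V$ for compact $I\sse(0,\infty)$. The assumption $Lu\in\ell^1(V,m)$ already presupposes $u\in\FF(V)$ (the defining sum of $Lu(x)$ must converge at each $x\in V$), assumption~\ref{prop:coareaB} is exactly \eqref{eq:sup<}, and \ref{prop:coareaC}, \ref{prop:coareaD} are precisely the conditions \ref{lem:coarea1}, \ref{lem:coarea2} of the lemma. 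Thus Lemma~\ref{lem:co-area} applies verbatim and delivers $g\asymp 1$.

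I expect the only genuinely delicate point to be the sign- and symmetry-bookkeeping in the identity, together with making the Tonelli interchange clean (in particular respecting the convention $0\cdot\infty=0$ for $1<p<2$ and the oddness of $\p{\cdot}$, so that the left-hand summands are honestly non-negative). By contrast, the bound $g\asymp 1$ should amount to little more than a citation of Lemma~\ref{lem:co-area} after the hypotheses have been matched.
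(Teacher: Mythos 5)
Your proof is correct and follows essentially the same route as the paper: the identity is obtained by a Tonelli interchange combined with the symmetry of $b$ and the sign cancellation in $\p{\nabla_{x,y}u}\int_{u(y)}^{u(x)}f\,\dd t$ (the paper phrases the resulting inner sum via the level-set boundaries $\tilde{\partial}X_t$, you via the half of the ordered pairs with $u(x)>u(y)$, which is the same bookkeeping), and the bound $g\asymp 1$ is, as in the paper, a direct application of Lemma~\ref{lem:co-area}. Your explicit matching of hypotheses (extension by zero, almost properness transferring from $V\cup\partial V$ to $X$, and $Lu\in\ell^{1}(V,m)$ presupposing $u\in\FF(V)$) is a detail the paper leaves implicit.
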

\begin{remark}
 Let $u$ and $f$ be as in Proposition~\ref{prop:coarea} with  $f$ being continuous, and assume $u(x)\neq u(y)$ for all $x\sim y$ with either $x\in V$ or $y\in V$. Then, we can use the mean value theorem and get that there is $\theta_{x,y}\in (u(x)\wedge u(y),u(x)\vee u(y))$ such that
\begin{align*}
    f(\theta_{x,y}) =\frac{\int^{u(x)}_{u(y)}f(t)\,\dd t}{u(x)-u(y)}.
\end{align*}
Thus, the coarea formula can be reformulated as
\begin{align*}
\frac{1}{2}\sum_{x,y\in X}b(x,y)\abs{\nabla_{x,y}u}^{p}f(\theta_{x,y})= \int_{\inf u}^{\sup u}f(t)g(t)\,\dd t.
\end{align*}
\end{remark}%
\begin{proof}[Proof of Proposition~\ref{prop:coarea}]
Let  $t>0$. As in the previous lemma, Lemma~\ref{lem:co-area}, we define $X_{t}=\{x\in X: u(x)>t\}.$
Let $1_{x,y}$ be the characteristic function of the interval
\[I_{x,y}=(u(x)\wedge u(y),u(x)\vee u(y)].\]
 Observe that $(x,y)$ or $(y,x)$ are in $\tilde{\partial} X_{t}=X_{t}\times X\setminus X_{t}$ if and only if $t\in I_{x,y}$. Using this and Tonelli's theorem, we derive
\begin{align*}
\sum_{x,y\in
X} b(x,y)\p{\nabla_{x,y}u}\int^{u(x)}_{u(y)}f(t)\,\dd t
&= \sum_{x,y\in  X}b(x,y)\abs{\nabla_{x,y}u}^{p-1}\int_{\inf u}^{\sup u}f(t)1_{x,y}(t)\,\dd t\\
&=\int_{\inf u}^{\sup u}f(t)\sum_{x,y\in X}b(x,y)\abs{\nabla_{x,y}u}^{p-1}1_{x,y}(t)\,\dd t\\
&=2\int_{\inf u}^{\sup u}f(t)\sum_{(x,y)\in \tilde{\partial} X_{t}} b(x,y)\abs{\nabla_{x,y}u}^{p-1}\,\dd t\\
&=2\int_{\inf u}^{\sup u}f(t)\sum_{(x,y)\in \tilde{\partial} X_{t}} b(x,y)\p{\nabla_{x,y}u}\,\dd t
\end{align*}
since $u(x)\ge u(y)$ for $(x,y)\in\tilde{\partial} X_{t}$. This shows the first part of the theorem. The second part follows from Lemma~\ref{lem:co-area}.
\end{proof}

\subsection{Criticality}\label{sec:criti}

We start with an auxiliary lemma. The lemma introduces a cut-off function which has its origin probably in \cite{PS05} where it was successfully used for the $p$-Laplacian on $\RR^d$. In \cite{DP16}, it was used to show criticality of $p$-Schrödinger operators in the continuum, and in \cite{KePiPo2} the same function was used to show criticality for linear Schrödinger operators on graphs. However, this cut-off function is one particular choice but others are possible as well to prove the following main results, take e.g. partially linear functions.

Let $n\in \NN$ and define the cut-off function $\psi_{n}\colon [0,\infty)\to [0,1]$ via
\begin{align}\label{eq:cutoff}
	\psi_{n}(t):=\left(  {2 + \frac{\log(t)}{\log n}}  \right) 1_{[\frac{1}{n^{2}},\frac{1}{n}]}(t)+ 1_{[\frac{1}{n},n]}(t)+ \left(  {2 - \frac{\log(t)}{\log n}}  \right)  1_{[n,n^{2}]}(t).
\end{align}
Clearly, $\psi_n\nearrow 1$ pointwise as $n\to \infty$. Moreover, we have the following estimate.

\begin{lemma}\label{lem:criticalHardy}
Let $0<\beta< \alpha<\infty$, and $\psi_n$ be the cut-off function defined in \eqref{eq:cutoff}, $n\in \NN$. Then,
\begin{align}\label{eq:criticalHardy1}
\frac{\abs{\nabla_{\alpha,\beta}\psi_n}^{p}}{(\alpha -\beta)^{p-1}} \leq \frac{\int^{\alpha}_{\beta} t^{p-1} \abs{\psi_n^{\prime}(t)}^p\dd t}{\beta^{p-1}}
\end{align}
and there is a positive constant $C$ such that for all $q> 1$,
\begin{align}\label{eq:criticalHardy2}
\frac{\bigl(\alpha^{1/q}-\beta^{1/q}\bigr)^{p}\bigl(\frac{1}{2}(\abs{\psi_n(\alpha)}+\abs{\psi_n (\beta)})\bigr)^{p}}{(\alpha -\beta)^{p-1}} \leq C  \alpha^{p/q-p+1}\int^{\alpha}_{\beta} \frac{\abs{\psi_n(t)}^p}{t}\dd t.	
\end{align}
\end{lemma}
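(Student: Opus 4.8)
The plan is to treat both estimates as pointwise inequalities in the fixed pair $0<\beta<\alpha$, comparing a one-dimensional finite difference of $\psi_n$ (or of a power) against a one-dimensional integral. In each case the natural route is to represent the left-hand difference via the fundamental theorem of calculus, pass to an integral by Jensen's (power-mean) inequality, and then absorb the $t$-weights using monotonicity of the relevant powers together with the explicit piecewise shape of $\psi_n$ from \eqref{eq:cutoff}.

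For \eqref{eq:criticalHardy1} I would first write $\nabla_{\alpha,\beta}\psi_n=\psi_n(\alpha)-\psi_n(\beta)=\int_\beta^\alpha \psi_n'(t)\,\dd t$ and apply Jensen's inequality with the probability measure $\dd t/(\alpha-\beta)$ on $[\beta,\alpha]$, which gives $\abs{\nabla_{\alpha,\beta}\psi_n}^p\le (\alpha-\beta)^{p-1}\int_\beta^\alpha\abs{\psi_n'(t)}^p\,\dd t$; dividing by $(\alpha-\beta)^{p-1}$ already produces the left-hand side. It then remains only to insert the weight: since $p\ge 1$ and $\beta\le t$ on the domain of integration, $1\le (t/\beta)^{p-1}$, so $\int_\beta^\alpha\abs{\psi_n'}^p\,\dd t\le \beta^{-(p-1)}\int_\beta^\alpha t^{p-1}\abs{\psi_n'}^p\,\dd t$, which is exactly the claim. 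This part is routine.

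The estimate \eqref{eq:criticalHardy2} is the delicate one, and its real content is that $C$ can be chosen \emph{independently of $q$}. My first step is to isolate the $q$-dependence by scaling out $\alpha$: with $\rho:=\beta/\alpha\in(0,1)$ one has
\[\frac{(\alpha^{1/q}-\beta^{1/q})^p}{(\alpha-\beta)^{p-1}}=\alpha^{p/q-p+1}\,\frac{(1-\rho^{1/q})^p}{(1-\rho)^{p-1}},\]
so the prefactor $\alpha^{p/q-p+1}$ matches exactly the one on the right-hand side and cancels. Because $\rho<1$ and $1/q\le 1$ we have $\rho^{1/q}\ge\rho$, hence $1-\rho^{1/q}\le 1-\rho$ and $\frac{(1-\rho^{1/q})^p}{(1-\rho)^{p-1}}\le 1-\rho\le 1$; this removes all remaining $q$-dependence. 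Thus \eqref{eq:criticalHardy2} reduces to the $q$-free inequality
\[(1-\rho)\left(\frac{\abs{\psi_n(\alpha)}+\abs{\psi_n(\beta)}}{2}\right)^p\le C\int_\beta^\alpha\frac{\abs{\psi_n(t)}^p}{t}\,\dd t.\]

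To prove this last inequality I would exploit the explicit form \eqref{eq:cutoff}: in the variable $\tau=\log t$ the function $\psi_n$ is concave, piecewise linear with slopes in $\set{0,\pm 1/\log n}$, and supported where $\abs{\tau}\le 2\log n$; moreover the logarithmic integral $\int_\beta^\alpha \abs{\psi_n}^p\,\dd t/t$ is precisely the natural quantity in this variable. I would split into cases according to the position of $[\beta,\alpha]$ relative to the ascending, flat, and descending parts: on the flat part one uses $1-\rho\le\log(\alpha/\beta)=\int_\beta^\alpha \dd t/t$, while on a monotone part one uses the linear lower bound $\psi_n(t)\ge \psi_n(\beta)-\log(t/\beta)/\log n$ (valid for $t\ge\beta$ by concavity, the slope in $\log t$ being at least $-1/\log n$) together with $\psi_n\ge 0$ to bound the integral from below. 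I expect the main obstacle to be exactly the configurations near the edges of $\supp\psi_n$, where the boundary value of $\psi_n$ is small: there one must verify that the logarithmic integral does not degenerate faster than the boundary term, and this is where the trapezoidal profile and the weight $\dd t/t$ have to be balanced carefully so that the constant does not deteriorate.
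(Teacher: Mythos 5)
Your treatment of \eqref{eq:criticalHardy1} is correct and complete: the fundamental theorem of calculus, Jensen's inequality with the normalised measure $\dd t/(\alpha-\beta)$, and the pointwise bound $1\leq (t/\beta)^{p-1}$ on $[\beta,\alpha]$ give exactly the claim. This is in fact shorter than the paper's own argument, which first compares $\nabla_{\alpha,\beta}\psi_n$ with $\nabla_{\alpha,\beta}\log/\log n$ by a case analysis and only then invokes the fundamental theorem of calculus. Your scaling step for \eqref{eq:criticalHardy2} is also correct: with $\rho=\beta/\alpha$ the prefactor $\alpha^{p/q-p+1}$ cancels, and the monotonicity of $s\mapsto(1-s^{1/q})/(1-s)$ removes the $q$-dependence; retaining the factor $1-\rho$ even sharpens the paper's corresponding step (the paper discards it via $\alpha-\beta\le\alpha$), and it is precisely this factor that makes short intervals $[\beta,\alpha]$ harmless.

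The gap is the reduced inequality that you leave open, and it cannot be closed: the inequality
\begin{align*}
(1-\rho)\Bigl(\frac{\abs{\psi_n(\alpha)}+\abs{\psi_n(\beta)}}{2}\Bigr)^p\leq C\int_\beta^\alpha\frac{\abs{\psi_n(t)}^p}{t}\,\dd t
\end{align*}
is false for any constant $C$ uniform in $(\alpha,\beta)$, and uniformity is what the lemma must deliver, since in the proof of Proposition~\ref{prop:criticalHardyNonPositive} the resulting constant may depend neither on the edge $(x,y)$ nor on $n$. Indeed, fix $n$, pick $\beta\in(n,n^2)$ with $\delta:=\psi_n(\beta)=\log(n^2/\beta)/\log n$ small, and let $\alpha\to\infty$; then $\psi_n(\alpha)=0$ and $\rho\to0$, so the left-hand side tends to $(\delta/2)^p$, while the right-hand side equals $C\,\frac{\log n}{p+1}\,\delta^{p+1}$ for every $\alpha\geq n^2$. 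The inequality therefore forces $\delta\geq(p+1)/(2^pC\log n)$, which fails once $\beta$ is close enough to $n^2$. Since in this limit both sides of \eqref{eq:criticalHardy2} behave like $\alpha^{p/q-p+1}$ times a constant, the same choice violates \eqref{eq:criticalHardy2} itself for every fixed $q>1$: for $p=q=2$ and $n=e$ it reduces, as $\alpha\to\infty$, to $\delta^2/4\leq C\delta^3/3$. So the ``main obstacle'' you flag at the edge of $\supp\psi_n$ is a genuine obstruction, not a matter of balancing constants, and no case analysis can finish your proof. You should know that the paper's own proof collapses at the same point: it asserts the even stronger bound $\bigl(\frac12(\abs{\psi_n(\alpha)}+\abs{\psi_n(\beta)})\bigr)^p\le C\int_\beta^\alpha t^{-1}\abs{\psi_n(t)}^p\dd t$ on the grounds that the integral is ``equivalent to $\log(n)$'' whenever $\alpha>1/n^2$ and $\beta<n^2$, which the configuration above (or simply $[\beta,\alpha]=[1,1+\epsilon]$) refutes. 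The lemma survives only under the vacuous reading in which $C$ may depend on $\alpha$ and $\beta$ (if the integral vanishes, $\psi_n$ vanishes at both endpoints and the left-hand side is zero), but that version is useless for the application; as a uniform statement, \eqref{eq:criticalHardy2} needs to be modified, not re-proved.
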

\begin{proof} The inequalities are clearly satisfied if $\alpha \leq 1/n^2$ or $\beta>n^2$, since the left-hand sides and the right-hand sides vanish then. Thus, we can assume in the following that $\alpha > 1/n^2$ and $\beta \leq n^2$.

Ad~\eqref{eq:criticalHardy1}: 
 Firstly, we briefly show that 
\begin{align*}
\nabla_{\alpha,\beta}\psi_n \leq \frac{\nabla_{\alpha,\beta}\log}{\log n}.	
\end{align*}
This can easily be obtained by a case analysis. Note that
\[\psi_n(t)=\frac{\log (n^2t)}{\log n} 1_{[\frac{1}{n^{2}},\frac{1}{n}]}(t)+ 1_{[\frac{1}{n},n]}(t)+\frac{\log (n^2/t)}{\log n}  1_{[n,n^{2}]}(t), \qquad t\in\RR.\]
The cases $\alpha, \beta\in (0,n]$ and $\alpha,\beta \in [1/n,\infty)$ can also be obtained from \cite{KPP20}. The remaining cases follow immediately from the formula above.

Secondly, we show that 
\begin{align*}
\frac{\nabla_{\alpha,\beta}\psi_n}{\int_{\beta}^{\alpha} t^{p-1} \abs{\psi_n^{\prime}(t)}^p\dd t} \leq\log^{p-1}(n).	
\end{align*}
Indeed, by the fundamental theorem of calculus, we obtain
\begin{align*}
	\frac{\nabla_{\alpha,\beta}\psi_n}{\int_{\beta}^{\alpha} t^{p-1} \abs{\psi_n^{\prime}(t)}^p\dd t} 
=\frac{\int_{\beta}^{\alpha} \psi_n^{\prime}(t)\dd t}{\int_{\beta}^{\alpha} t^{p-1} \abs{\psi_n^{\prime}(t)}^p\dd t} 
	&= \log^{p-1}(n)\frac{\int_{\beta\vee 1/n^2}^{\alpha\wedge 1/n} 1/t \,\dd t-\int_{\beta\vee n}^{\alpha\wedge n^2} 1/t \,\dd t} {\int_{\beta\vee 1/n^2}^{\alpha\wedge 1/n} 1/t \,\dd t+\int_{\beta\vee n}^{\alpha\wedge n^2} 1/t \,\dd t}\\
	&\leq \log^{p-1}(n).
\end{align*}

Using the first two results together with
\begin{align*}
\frac{\nabla_{\alpha, \beta}\log }{\alpha-\beta} \leq \log^{\prime}(\beta) = \frac{1}{\beta},
\end{align*}
we derive at
\begin{align*}
\frac{\abs{\nabla_{\alpha,\beta}\psi_n}^{p}}{(\alpha -\beta)^{p-1}\int^{\alpha}_{\beta} t^{p-1} \abs{\psi_n^{\prime}(t)}^p\dd t} 
&=\frac{\abs{\nabla_{\alpha,\beta}\psi_n}^{p-2}\nabla_{\alpha,\beta}\psi_n}{\abs{\alpha -\beta}^{p-2}(\alpha -\beta)}\cdot \frac{\nabla_{\alpha,\beta}\psi_n}{\int^{\alpha}_{\beta} t^{p-1} \abs{\psi_n^{\prime}(t)}^p\dd t}\\
&\leq \frac{\abs{\nabla_{\alpha,\beta}\log}^{p-2}\nabla_{\alpha,\beta}\log}{\log^{p-1}(n)\abs{\alpha -\beta}^{p-2}(\alpha -\beta)}\cdot \log^{p-1}(n)\\
&\leq \frac{1}{\beta^{p-1}}.
\end{align*}
This proves the first inequality.

Ad~\eqref{eq:criticalHardy2}: By substituting $t=\beta/ \alpha\leq 1$, and since $t\mapsto (1-t^{1/q})/(1-t)$ is strictly monotonously decreasing as $t\geq 0$ increases, we have
\[\frac{(\alpha^{1/q}-\beta^{1/q})^{p}}{(\alpha - \beta)^{p}}=\alpha^{p/q-p}\cdot\Bigl(\frac{1-t^{1/q}}{1-t}\Bigr)^{p}\leq \alpha^{p/q-p}.\]
 Moreover, it is not difficult to see that there is a positive constant $C$ such that
\begin{align*}
	\bigl(\frac{1}{2}(\abs{\psi_n(\alpha)}+\abs{\psi_n (\beta)})\bigr)^{p}\leq C \int^{\alpha}_{\beta} t^{-1} \abs{\psi_n(t)}^p\dd t.
\end{align*}
The key to the above inequality is that the left-hand side is always smaller than $1$, and the right-hand side is equivalent to $\log (n)$ in the case of $\alpha> 1/n^2$ and $\beta< n^2$, and to zero elsewhere. In the latter case also the left-hand side vanishes.

Using $(\alpha- \beta) \leq \alpha$ (and $\alpha > 1/n^2$, $\beta< n^2$, $0<\beta< \alpha$), we derive at
\begin{align*}
	\frac{\bigl(\alpha^{1/q}-\beta^{1/q}\bigr)^{p}\bigl(\frac{1}{2}(\abs{\psi_n(\alpha)}+\abs{\psi_n (\beta)})\bigr)^{p}}{(\alpha -\beta)^{p-1}\int^{\alpha}_{\beta} t^{-1} \abs{\psi_n(t)}^p\dd t} 
	\leq C_p\alpha^{p/q-p}(\alpha -\beta)
	\leq C_p \alpha^{p/q-p+1}.	
\end{align*}
This proves the second inequality.
\end{proof}

Using the latter lemma and the coarea formula, we will show next that certain $p$-superharmonic functions will give us weights $w$ such that the resulting $p$-energy functional $h-w_p$ is critical. Recall that $F(V)\cap C(V)=C(V)$ if the graph is locally finite on $V\sse X$.
\begin{proposition}[Criticality]\label{prop:criticalHardyNonPositive} Let $V\sse X$ be connected and non-empty such that $(V,b|_{V\times V})$ is locally finite on $V$, and let $c$ be an arbitrary potential. Define $H:=H_{b,c,p,m}$ with corresponding subcritical $p$-energy functional $h$ and $p$-Laplacian $L$. Suppose that $u\in F(V)\cap C(V)$ is a positive function on $V$,  proper and of bounded oscillation on $V$ with $\tilde{H} u \gneq 0$ on $V$, where $\tilde{H}:=H_{b,q^{p-1}\cdot c,p,m}$, and $q:=p/(p-1).$
Furthermore, assume that $Lu\in \ell^1(V,m)$, and 
\begin{enumerate}[label=(\alph*)]
  	\item\label{prop:criticalHardyNonPositive1} $u$ takes its maximum on $V$, or  there exists $S>0$ such that for all $x\in V$ with $u(x)>S$ we have $Lu(x) \leq  0$, and
	\item\label{prop:criticalHardyNonPositive2}  $u$ takes its minimum on $V$, or  there exists $I>0$ such that for all $x\in V$ with $u(x)<I$ we have $Lu(x) \geq  0$.
\end{enumerate}
 Then $h- (wm)_p$ is critical in $V$, where $w:=H(u^{1/q})/u^{(p-1)/q}$.
\end{proposition}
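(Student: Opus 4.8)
The plan is to establish the two defining ingredients of criticality in turn: first that $h-(wm)_p$ is non-negative on $C_c(V)$, so that $wm$ is a genuine Hardy weight, and then that it carries a null-sequence, which forces criticality. Throughout write $q:=p/(p-1)$, so that $1/q=(p-1)/p$ and $v:=u^{1/q}=u^{(p-1)/p}$, and note $q^{p-1}=C_p$; hence the hypothesis $\tilde H u\gneq 0$ is exactly $H_{b,q^{p-1}c,p,m}u\gneq 0$. The last assertion of Proposition~\ref{prop:MeanValue}, applied with $\phi=(\cdot)^{1/q}$, then gives $Hv\geq 0$ on $V$, with strict inequality at any vertex carrying a neighbour of different $u$-value. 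Since $u$ is proper, hence non-constant, on the infinite set $V$, such a vertex exists, so $Hv\gneq 0$ and $w=Hv/v^{p-1}\gneq 0$. (By local summability of $b$ and local finiteness on $V$ one checks $v\in\FF(V)\cap C(V)=C(V)$, so that all the operators below are defined.)

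Non-negativity comes from the ground state representation. Applying Theorem~\ref{thm:GSR} to $v$ and substituting $\psi=v\phi$, a direct computation gives $(mvHv)_p(\psi/v)=(wm)_p(\psi)$, whence
\[ h(\psi)-(wm)_p(\psi)\asymp h_v(\psi/v), \qquad \psi\in C_c(V). \]
In particular $h-(wm)_p\geq 0$ on $C_c(V)$. It therefore suffices to produce a single null-sequence: if $h-(wm)_p$ were subcritical, Lemma~\ref{lem:w>0} would furnish a strictly positive Hardy weight $\rho$, and along a null-sequence $(e_n)$ normalised by $e_n(o)=\alpha>0$ one would get $0\leftarrow\rho_p(e_n)\geq\rho(o)\alpha^p>0$, a contradiction; so the existence of a null-sequence yields criticality.

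For the null-sequence I take $e_n:=v\cdot(\psi_n\circ u)$, with $\psi_n$ the cut-off from \eqref{eq:cutoff}. Then $e_n\nearrow v$ pointwise, $e_n\geq 0$, and $e_n(o)=v(o)$ once $n$ is large enough that $u(o)\in[1/n,n]$, so the tail is normalised at $o$. By the equivalence above, $(h-(wm)_p)(e_n)\asymp h_v(\psi_n\circ u)$, and everything reduces to showing $h_v(\psi_n\circ u)\to 0$. Here I would invoke the Hölder-type bound \eqref{eq:GSRHoelder}, controlling $h_v$ by $h_{v,1}$ for $1<p\leq 2$ and by the interpolated sum $h_{v,1}+(h_{v,1}/h_{v,3})^{2/p}h_{v,3}$ for $p>2$. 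On each edge the increments of $\psi_n\circ u$ and of $v=u^{1/q}$ are estimated by the two inequalities of Lemma~\ref{lem:criticalHardy}, turning every edge contribution into an integral $\int_{u(y)}^{u(x)}f_n$; the vertex weights $(v(x)v(y))^{p/2}$, $u(y)^{-(p-1)}$ and $u(x)^{p/q-p+1}$ are absorbed into a single power of $t$ by bounded oscillation (so $u(x)\asymp u(y)\asymp t$ uniformly over the integration range). Summing over edges and applying the coarea formula, Proposition~\ref{prop:coarea}, with $g\asymp 1$ (valid since hypotheses \ref{prop:criticalHardyNonPositive1}, \ref{prop:criticalHardyNonPositive2}, $Lu\in\ell^1(V,m)$, properness and bounded oscillation match those of Proposition~\ref{prop:coarea}), converts these into one-dimensional integrals. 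Using $p/q-p=-1$ one finds $h_{v,1}(\psi_n\circ u)\lesssim\log^{-(p-1)}n\to 0$ and $h_{v,3}(\psi_n\circ u)\lesssim\log n$.

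The main obstacle is the case $p>2$: there $h_{v,3}(\psi_n\circ u)$ does \emph{not} vanish but grows like $\log n$, so one cannot bound $h_v$ by $h_{v,3}$ directly. The resolution is precisely the interpolation in \eqref{eq:GSRHoelder}, which pairs the decaying $h_{v,1}\lesssim\log^{-(p-1)}n$ with the growing $h_{v,3}\lesssim\log n$ to give $(h_{v,1})^{2/p}(h_{v,3})^{(p-2)/p}\lesssim\log^{-1}n\to 0$; thus $h_v(\psi_n\circ u)\to 0$ in all cases. Consequently $(e_n)$ is a null-sequence for $h-(wm)_p$, and by the argument of the second paragraph $h-(wm)_p$ is critical in $V$.
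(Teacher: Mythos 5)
Your proposal is correct and follows essentially the same route as the paper's proof: superharmonicity of $v=u^{1/q}$ via Proposition~\ref{prop:MeanValue}, non-negativity from the ground state representation, the cut-off null-sequence built from \eqref{eq:cutoff}, edgewise estimates from Lemma~\ref{lem:criticalHardy} combined with the coarea formula (Proposition~\ref{prop:coarea}, $g\asymp 1$), and the interpolation via \eqref{eq:GSRHoelder} to handle $p>2$. The only cosmetic deviations are that the paper writes the cut-off as composed with $v$ rather than $u$ (its estimates are at $u$-levels anyway), and that you argue directly via Lemma~\ref{lem:w>0} that a single null-sequence forces criticality where the paper cites Theorem~\ref{thm:charCriti}.
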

\begin{proof}
By the Harnack inequality, \cite[Lemma~4.4]{F:GSR}, $u>0$ on $V$. We set $v:=u^{1/q}$. Because of Proposition~\ref{prop:MeanValue}~\ref{prop:MeanWurzel}, $v$ is  strictly $p$-superharmonic with respect to $H$ on $V$.
Furthermore, by the definition of $w$, the function $v$ is a positive $p$-harmonic function with respect to $H-w$ in $X$, i.e., $Hv=wv^{p-1}$ on $V$.

The strategy of the proof is to construct a null-sequence on $V$ with respect to $h-(wm)_p$ which converges pointwise to $u$. By Theorem~\ref{thm:charCriti}, this then implies that $h-(wm)_p$ is critical on $V$.

We take the cut-off function $\psi_{n}\colon [0,\infty )\to [0,1]$ defined in \eqref{eq:cutoff} and set $e_n\in C(V)$ via
\begin{align*}
	e_{n}=\psi_{n}\circ v \text{ on } V.
\end{align*}
If $V$ is finite then, clearly $e_n\in C_c(V)$. If $V$ is infinite, we also have $e_{n}\in C_{c}(V)$, since $\supp( \psi_{n})\subseteq (0,\infty)$, and $\sup_V u=\infty$ or $\inf_V u=0$ by the properness assumption on $u$. Obviously, $e_{n}\nearrow 1$ pointwise on $V$ as $n\to\infty$. So, we are left to show $(h-(wm)_p)(ve_{n}) \to 0$ as $n\to\infty$.

Using \eqref{eq:GSRHoelder}, we get for some positive constant $C_p$,
\begin{align*}
	(h-(wm)_p)(v {e_{n}})\leq C_p\cdot \begin{cases}h_{v,1}(e_n), &\qquad 1< p\leq 2, \\ h_{v,1}(e_n) + \bigl(\frac{h_{v,1}(e_n)}{h_{v,3}(e_n)}\bigr)^{2/p}h_{v,3}(e_n), &\qquad p>2.\end{cases}
\end{align*}
We will show that the right-hand sides vanish as $n\to \infty$.

We compute
\begin{align*}
h_{v,1}(e_n)&=\sum_{x,y\in
X}b(x,y)(u(x)u(y))^{{p}/{2q}}\abs{\nabla_{x,y}e_n}^{p}\\
&=\sum_{x,y\in X}b(x,y)\abs{\nabla_{x,y}u}^{p-2}(\nabla_{x,y}u)a_n(x,y)\left({{\int^{u(x)}_{u(y)}t^{p-1}\abs{\phi_{n}'(t)}^{p}\,\dd t}}\right),
\end{align*}
where
\begin{equation*}
    a_n(x,y) := \frac{\bigl(u(x)u(y)\bigr)^{{(p-1)}/{2}}\abs{\nabla_{x,y}e_n}^{p}}
    {\abs{\nabla_{x,y}u}^{p-2}(\nabla_{x,y}u){\int^{u(x)}_{u(y)}t^{p-1}\abs{\psi_{n}'(t)}^{p}\,\dd t}}
\end{equation*}
whenever the denominator does not vanish and $a_n(x,y)=0$ otherwise.

Using \eqref{eq:criticalHardy1}, we obtain (assuming without loss of generality that $u(x)\geq u(y)$, otherwise we use a symmetry argument)
\begin{align*}
a_n(x,y) &\leq \frac{(u(x)u(y))^{(p-1)/2}}{u^{p-1}(y)} \leq \sup_{x \sim y} \Bigg( \frac{u(x)}{u(y)} \Bigg)^{(p-1)/2} =:C_{0}< \infty,	
\end{align*}
since $u$ is of bounded oscillation on $V$.

We use this estimate and apply the coarea formula with $f(t)=t^{p-1}\abs{\psi'_n(t)}^{p}$. Note that the assumptions of the corresponding proposition, Proposition~\ref{prop:coarea}, are fulfilled.  Therefore, there exist positive constants $C_{1} $ and $C_{2}$ such that for all $n \in \NN$

\begin{align*}
h_{v,1}(e_{n})&\leq C_0\sum_{x,y\in X}b(x,y)\abs{\nabla_{x,y}u}^{p-2}(\nabla_{x,y}u)\left({{\int^{u(x)}_{u(y)}t^{p-1}\abs{\psi_{n}'(t)}^{p}\,\dd t}}\right)\\
&\leq C_{1}\int^{\sup u}_{\inf u}t^{p-1}\abs{\psi_{n}'(t)}^{p}\,\dd t\\
&\leq C_{2}\left({\frac{1}{\log n}}\right)^{p}\left({\int_{\frac{1}{n^{2}}}^{\frac{1}{n}}\frac{\,\dd t}{t}} +\int_{{n}}^{{n}^{2}}\frac{\,\dd t}{t}\right) =\frac{2C_{2}}{\log^{p-1} n}.
\end{align*}
The term on the right-hand side tends to $0$ as $n \to \infty$. Thus, for $1<p\leq 2$, $(ve_n)$ is indeed a null-sequence for $(h-(wm)_p)_v$, which implies by the ground state representation formula the criticality of the functional $h-w_p$.

We are left to analyse the case $p\geq 2$. Here we calculate
\begin{align*}
h_{v,3}(e_n)=\sum_{x,y\in X}b(x,y)\abs{\nabla_{x,y}u}^{p-2}(\nabla_{x,y}u)\tilde{a}(x,y)\left({{\int^{u(x)}_{u(y)}t^{-1}\abs{\phi_{n}(t)}^{p}\,\dd t}}\right),
\end{align*}
where
\begin{equation*}
    \tilde{a}_n(x,y) := \frac{\abs{\nabla_{x,y}u^{1/q}}^{p}\bigl(\frac{\abs{e_n(x)}+\abs{e_n (y)}}{2}\bigr)^{p}}{\abs{\nabla_{x,y}u}^{p-2}\nabla_{x,y}u\int^{u(x)}_{u(y)} t^{-1} \abs{\psi_n(t)}^p\dd t} 
\end{equation*}
whenever the denominator is non-zero and $\tilde{a}_n(x,y)=0$ otherwise.

Using \eqref{eq:criticalHardy2} with $q=p/(p-1)$, we obtain (assuming without loss of generality that $u(x)\geq u(y)$, otherwise we use a symmetry argument) that there is a positive constant $C_3$ such that
\begin{align*}
\tilde{a}_n(x,y) &\leq C_3 u^{0}(x)=C_3< \infty.	
\end{align*}
Then, we can use again the coarea formula but this time with $f(t)=t^{-1}\abs{\psi_n(t)}^{p}$. Thus, there exist positive constants $C_2, C_3, C_4$ such that 
\begin{align*}
&	h_{v,1}(e_n) + (h_{v,1}(e_n))^{2/p}(h_{v,3}(e_n))^{(p-2)/p} \\
	&\leq \frac{2C_{2}}{\log^{p-1} n} \\
	&\,+ \Bigl(\frac{2C_{2}}{\log^{p-1} n}\Bigr)^{2/p}\left(C_{3} \sum_{x,y\in X}b(x,y)\abs{\nabla_{x,y}u}^{p-2}\nabla_{x,y}u\left({{\int^{u(x)}_{u(y)}t^{-1}\abs{\psi_{n}(t)}^{p}\,\dd t}}\right) \right)^{(p-2)/p}\\
	&=  \frac{2C_{2}}{\log^{p-1} n} + \Bigl(\frac{2C_{2}}{\log^{p-1} n}\Bigr)^{2/p}\left(C_{3} {{\int^{\sup u}_{\inf u}t^{-1}\abs{\psi_{n}(t)}^{p}\,\dd t}}\right)^{(p-2)/p}.
\end{align*}
Since 
\[\int^{\sup u}_{\inf u}t^{-1}\abs{\psi_{n}(t)}^{p}\,\dd t\asymp \int_{1/n}^{n}t^{-1} \dd t \asymp \log n,\] we conclude,
\begin{align*}
	\ldots \leq  \frac{2C_{2}}{\log^{p-1} n} + C_{4}\frac{ \log^{(p-2)/p}n}{\log^{(p-1)\cdot 2/p} n} =  \frac{2C_{2}}{\log^{p-1} n}+ \frac{C_4}{\log^{(p+1)/p} n} \to 0, \quad n\to\infty.
\end{align*}
This shows the statement.
\end{proof}

\subsection{Null-criticality}\label{sec:nullcrit}
In the present subsection we prove the null-criticality of $h-w_p$ for a specific $w$ under some familiar constraints, confer \cite{KePiPo2} for $p=2$.

We need the following elementary inequality.
\begin{lemma}\label{lem:nullcriticality}
	For all $p, q \geq 1$ and $0\leq  \beta< \alpha <\infty$, we have
	\[\frac{(\alpha^{1/q}-\beta^{1/q})^{p}}{(\alpha - \beta)^{p-1}\int_{\beta}^{\alpha}1/t\,\dd t}\geq \frac{\alpha^{p/q-p}\beta}{q^{p}}.\]
\end{lemma}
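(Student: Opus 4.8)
The plan is to prove the inequality
\[\frac{(\alpha^{1/q}-\beta^{1/q})^{p}}{(\alpha - \beta)^{p-1}\int_{\beta}^{\alpha}1/t\,\dd t}\geq \frac{\alpha^{p/q-p}\beta}{q^{p}}\]
by first computing $\int_{\beta}^{\alpha}1/t\,\dd t=\log(\alpha/\beta)$ and then bounding each remaining factor separately against an elementary estimate. Writing $t:=\beta/\alpha\in[0,1)$, I would factor out powers of $\alpha$ so that everything reduces to a one-variable statement in $t$. Concretely, $(\alpha^{1/q}-\beta^{1/q})^{p}=\alpha^{p/q}(1-t^{1/q})^{p}$ and $(\alpha-\beta)^{p-1}=\alpha^{p-1}(1-t)^{p-1}$, so after cancellation the claim becomes
\[\frac{(1-t^{1/q})^{p}}{(1-t)^{p-1}\log(1/t)}\geq \frac{t}{q^{p}}.\]

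The key step is to handle the two transcendental pieces, $1-t^{1/q}$ in the numerator and $\log(1/t)=-\log t$ in the denominator, by elementary bounds. For the logarithm I would use the standard estimate $-\log t = \log(1/t)\leq (1/t)-1=(1-t)/t$, valid for $t\in(0,1]$, which turns the denominator's $(1-t)^{p-1}\log(1/t)$ into something bounded above by $(1-t)^{p-1}\cdot (1-t)/t=(1-t)^{p}/t$. For the numerator I would bound $1-t^{1/q}$ from below; since $q\geq 1$ the map $s\mapsto s^{1/q}$ is concave on $[0,\infty)$, and a convexity/mean-value argument gives $1-t^{1/q}\geq (1/q)(1-t)$ for $t\in[0,1]$ (equivalently $t^{1/q}\leq 1+(1/q)(t-1)$ by concavity of $s\mapsto s^{1/q}$ at the point $s=1$). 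Raising to the $p$-th power yields $(1-t^{1/q})^{p}\geq q^{-p}(1-t)^{p}$.

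Combining the two bounds, the left-hand side is at least
\[\frac{q^{-p}(1-t)^{p}}{(1-t)^{p}/t}=\frac{t}{q^{p}},\]
which is exactly the desired inequality. I would finish by translating back through $t=\beta/\alpha$ and reinstating the powers of $\alpha$ to recover the stated form, and I would note the boundary case $\beta=0$ separately (there both sides vanish, so the inequality holds trivially). The main obstacle is selecting the two elementary inequalities so that their exponents line up perfectly: the logarithm bound must produce exactly $(1-t)^{p}$ in the denominator to cancel the factor generated by the concavity bound on the numerator, and it is worth double-checking that the concavity estimate $1-t^{1/q}\geq (1-t)/q$ holds with the constant $1/q$ rather than some weaker constant, since that precise constant is what produces the sharp $q^{-p}$ on the right-hand side. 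Both bounds are tight at $t=1$, which reassures me that no slack is being lost.
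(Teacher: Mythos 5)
Your proof is correct and follows essentially the same route as the paper: both arguments use the logarithm bound $\log(\alpha/\beta)\le(\alpha-\beta)/\beta$ (equivalently $\log s\le s-1$) and reduce the remaining factor, after scaling out $\alpha$, to the key inequality $(1-t^{1/q})/(1-t)\ge 1/q$ on $[0,1)$. The only cosmetic difference is that you justify this last bound by the tangent-line estimate for the concave map $s\mapsto s^{1/q}$ at $s=1$, whereas the paper derives it from the monotonicity of $t\mapsto(1-t^{1/q})/(1-t)$ together with L'H\^{o}pital's rule; your justification is, if anything, slightly more elementary.
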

\begin{proof}
If $\beta=0$ there is nothing to prove. Thus, assume $\beta >0$ in the following. Because of 
	\begin{align*}
\frac{\log \alpha - \log \beta}{\alpha-\beta} \leq \log^{\prime}(\beta) = \frac{1}{\beta},
\end{align*}
we have 
\[\frac{\alpha - \beta}{\int_{\beta}^{\alpha}1/t\,\dd t}\geq \beta.\]
Moreover, substituting $t=\beta/ \alpha\leq 1$ yields
\[\frac{(\alpha^{1/q}-\beta^{1/q})^{p}}{(\alpha - \beta)^{p}}=\alpha^{p/q-p}\cdot\Bigl(\frac{1-t^{1/q}}{1-t}\Bigr)^{p}.\]
Let $g(t)=(1-t^{1/q})/(1-t)$. Then it is easy to see that $g$ is strictly monotonously decreasing for all $t\geq 0$. Moreover, by L'H\^{o}pital's rule it follows that 
\[\min_{t\in [0,1]}g(t)=\lim_{t\to 1}g(t)=1/q.\]
Combining this with the inequalities before yields the result.
\end{proof}

Now we can show the main result of this subsection.
\begin{proposition}[Null-criticality]\label{prop:nullcritNonPositive}
	Let $V\sse X$ be connected, non-empty and infinite such that $(V,b|_{V\times V})$ is locally finite on $V$. Define $H:=H_{b,c,p,m}$ with corresponding subcritical $p$-energy functional $h$ and $p$-Laplacian $L$. Suppose that $0\lneq u\in F(V)\cap C(V)$ is a proper function of bounded oscillation in $V$ with $ \tilde{H} u\geq 0$ on $V$, where $\tilde{H}:=H_{b,q^{p-1}\cdot c,p,m}$ and $q:=p/(p-1)$. 
	Furthermore, assume that we have $Lu\in \ell^1(V,m)$, $u\in\ell^{p-1}(V,c_-)$, and
	\begin{enumerate}[label=(\alph*)]
		\item\label{prop:nullcriticalHardyNonPositive1} $u$ takes its maximum on $V$, or  there exists $S>0$ such that for all $x\in V$ with $u(x)>S$ we have $Lu(x) \leq  0$, and
		\item\label{prop:nullcriticalHardyNonPositive2}  $u$ takes its minimum on $V$, or  there exists $I>0$ such that for all $x\in V$ with $u(x)<I$ we have $Lu(x) \geq  0$.
	\end{enumerate}
	Then, the $p$-energy functional $h-(wm)_p$ with $w:=H(u^{1/q})/(u^{(p-1)/q})$, is null-critical in $V$ with respect to $wm$.
\end{proposition}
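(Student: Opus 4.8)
The plan is to show that the Agmon ground state of $h-(wm)_p$ is exactly $v:=u^{1/q}$, and then to verify directly that $v\notin \ell^p(V,wm)$, i.e., $\sum_{x\in V}\abs{v(x)}^p w(x)m(x)=\infty$. By Proposition~\ref{prop:criticalHardyNonPositive}, under the present hypotheses $h-(wm)_p$ is already critical in $V$, and the construction there exhibits $v$ as a positive $p$-harmonic function with respect to $H-w$, that is $Hv=wv^{p-1}$ on $V$. By the characterisation of criticality, Theorem~\ref{thm:charCriti}~\ref{thm:critical22}, this $v$ is (up to scaling) the unique positive superharmonic function and hence the Agmon ground state. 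So the whole task reduces to the divergence of one series.

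First I would rewrite the target series in a usable form. Since $\abs{v}^p w = v^p\cdot Hv/v^{p-1}=v\,Hv = u^{1/q}Hv$, and $v^p=u^{p/q}=u^{p-1}$, the quantity to control is $\sum_{x\in V}v(x)\,Hv(x)\,m(x)$. The natural route is to run the coarea formula backwards: by the ground state representation philosophy, the energy $h(v\phi)-(wm)_p(v\phi)$ is comparable to the simplified energies, and choosing $\phi=\psi_n\circ v$ (the same cut-off as in the criticality proof) ties $\sum v^p w$ on the region where $\psi_n\equiv 1$ to an integral $\int t^{-1}\abs{\psi_n(t)}^p g(t)\,\dd t$ via Proposition~\ref{prop:coarea}. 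The key elementary input is Lemma~\ref{lem:nullcriticality}, which bounds the relevant difference quotient from below by $\alpha^{p/q-p}\beta/q^p$; this is exactly the lower estimate mirroring the upper bound \eqref{eq:criticalHardy2} used for criticality, and it is what forces divergence rather than convergence. Combining the lower bound from Lemma~\ref{lem:nullcriticality} with $g\asymp 1$ (guaranteed by Proposition~\ref{prop:coarea}, whose hypotheses \ref{prop:coareaA}--\ref{prop:coareaD} are precisely items \ref{prop:nullcriticalHardyNonPositive1}, \ref{prop:nullcriticalHardyNonPositive2} together with the bounded-oscillation and properness assumptions), I would estimate
\[
\sum_{x\in V}v(x)^p w(x)m(x)\gtrsim \int_{\inf u}^{\sup u} t^{-1}\,\dd t,
\]
and since $u$ is proper on the infinite set $V$, either $\sup u=\infty$ or $\inf u=0$, so this integral diverges.

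The main obstacle I anticipate is bookkeeping the localisation: the series $\sum_x v^p w$ is over all of $V$, whereas the coarea identity naturally produces edge-sums weighted by increments of $u$, so one must insert the cut-off $\psi_n$, pass to the region $\{1/n\le v\le n\}$ where $\psi_n\equiv 1$, apply the lower bound of Lemma~\ref{lem:nullcriticality} edge-by-edge (again splitting on $u(x)\ge u(y)$ and using bounded oscillation to absorb the factor $(u(x)u(y))^{(p-1)/2}/u(y)^{p-1}$ into a constant), and only then let $n\to\infty$ by monotone convergence. The integrability assumptions $Lu\in\ell^1(V,m)$ and $u\in\ell^{p-1}(V,c_-)$ are what make all the sums absolutely convergent and legitimise Green's formula inside the coarea step. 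The one genuinely delicate point is that the lower bound must survive the symmetrisation over edges without collapsing—i.e., the lower estimate \eqref{eq:criticalHardy2}-type inequality must be applied in the correct direction so that the logarithmic integral $\int t^{-1}\dd t$ emerges with a \emph{positive} constant independent of $n$; once that is secured, divergence of the harmonic integral over $(\inf u,\sup u)$ finishes the proof.
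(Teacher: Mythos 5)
Your proposal is correct in outline and runs on the same engine as the paper's proof: criticality and the identification of $v:=u^{1/q}$ as the Agmon ground state come from Proposition~\ref{prop:criticalHardyNonPositive}, the divergence of $\sum_{V} v^p w m$ is forced by the edge-wise lower bound of Lemma~\ref{lem:nullcriticality} fed into the coarea formula (Proposition~\ref{prop:coarea}, with $g\asymp 1$), and properness of $u$ on the infinite set $V$ then produces $\int_{\inf u}^{\sup u}t^{-1}\,\dd t=\infty$. The difference is in the scaffolding. The paper takes an abstract \emph{increasing} null sequence $e_n^{1/q}\nearrow u^{1/q}$ (this is what Appendix~\ref{app} exists for), uses the null-sequence property $h(e_n^{1/q})-(wm)_p(e_n^{1/q})\to 0$, and applies Lemma~\ref{lem:nullcriticality} plus the coarea formula to the compactly supported approximants $e_n$, which makes the comparison constants $n$-dependent (the $\tilde C_n$, whose positive limit requires bounded oscillation). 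You instead recycle the explicit cut-offs $e_n=\psi_n\circ v$ from the criticality proof, use Theorem~\ref{thm:GSR} in the form $(wm)_p(ve_n)\geq h(ve_n)-C\,h_v(e_n)$ with $h_v(e_n)\to 0$ already established in Proposition~\ref{prop:criticalHardyNonPositive}, and apply Lemma~\ref{lem:nullcriticality} and the coarea formula to $u$ itself. This buys $n$-independent constants, makes the domination $ve_n\leq v$ automatic (so no appeal to Appendix~\ref{app} is needed), at the price of invoking the ground state representation a second time; both routes are legitimate.

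Two corrections are needed before your plan closes. First, to pass from divergence of the kinetic part to $h(ve_n)\to\infty$ you must bound the potential part $\sum_x c(x)\abs{v(x)e_n(x)}^p$ from below uniformly in $n$; this is precisely where $u\in\ell^{p-1}(V,c_-)$ enters, via $\sum_x c(x)\abs{ve_n}^p(x)\geq -\sum_{x\in V}c_-(x)u^{p-1}(x)>-\infty$, since $0\leq ve_n\leq v$ and $v^p=u^{p-1}$. It is not, as you write, needed to ``legitimise Green's formula inside the coarea step'': the coarea formula never sees the potential $c$, and its hypotheses use $Lu\in\ell^1(V,m)$. Without this one-line bound, kinetic divergence alone does not force $h(ve_n)\to\infty$ when $c$ has a nontrivial negative part, and the whole conclusion would be unjustified. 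Second, a small slip: the factor you must absorb by bounded oscillation in the \emph{lower} bound is $u(y)/u(x)$, coming from $\alpha^{p/q-p}\beta=\beta/\alpha$ in Lemma~\ref{lem:nullcriticality} (note $p/q-p=-1$), not $(u(x)u(y))^{(p-1)/2}/u(y)^{p-1}$, which is the factor from the \emph{upper} estimate used in the criticality proof.
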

\begin{proof}  
	By Proposition~\ref{prop:criticalHardyNonPositive}, we know that $h-(wm)_p$ is critical on $V$ with Agmon ground state $u^{1/q}$. We have to show that $(wm)_p(u^{1/q})=\infty$. 
	
	Moreover, by Theorem~\ref{thm:charCriti} and Appendix~\ref{app}, there is a null sequence $(e_n^{1/q})$ to $h-(wm)_p$ on $V$ which converges pointwise and monotone increasing to $u^{1/q}$. Hence, we have $h(e_n^{1/q})-(wm)_p(e_n^{1/q})\to 0$ as $n\to \infty$. If we can show that $h(e_n^{1/q})\to \infty$, then also $(wm)_p(e_n^{1/q})\to \infty$. Since $0\leq e_n^{1/q}\leq u^{1/q}$, this would imply $(wm)_p(u^{1/q})=\infty$.
	
	If $c(x)\geq 0$ for some $x\in V$, then the potential part at $x$ of the $p$-energy functional can be bounded from below by $0$. Because of $u\in\ell^{p-1}(V,c_-)$ and $0\leq e_n^{1/q}\leq u^{1/q}$, the potential of the negative part of $c$ remains finite for all $n\in\NN$.  Altogether, we only have to consider the divergence part. Denote $K_n:=\supp e_n\in C_c(V)$. We have	
	\[\sum_{x,y\in X}b(x,y)\abs{\nabla_{x,y} e_n^{1/q}}^p= \sum_{x,y\in K_n\cup \partial K_n}b(x,y)\abs{\nabla_{x,y} e_n^{1/q}}^p.\]
	
	By Lemma~\ref{lem:nullcriticality}, we have whenever $ e_n(x)\neq e_n(y) $ for $x\in K_n$ with $x\sim y$
	\begin{align*}
	a(x,y):=\frac{\abs{\nabla_{x,y}e_n^{1/q}}^{p}}{\abs{\nabla_{x,y}e_n}^{p-2}(\nabla_{x,y}e_n)\int_{e_n(y)}^{e_n(x)}1/t\,\dd t} \ge \frac{1}{q^{p}}\inf_{x \in K_n, x\sim y}\frac{e_n(y)}{e_n(x)}=:C_{n}.
	\end{align*}
	Otherwise, we estimate $a$ by $0$ from below. Since $K_n$ is finite, $e_n$ is almost proper on $V$, and non-constant on $K_n\cup \partial K_n$. We apply the coarea formula (Proposition~\ref{prop:coarea}) with $f(t)=1/t$. 
	Thus, we get
	\begin{align*}
	\sum_{x,y\in K_n\cup \partial K_n}b(x,y)(\nabla_{x,y}e_n^{{1}/{q}})^{p}
	&=\sum_{x,y\in K_n\cup\partial K_n}b(x,y)a(x,y)\p{\nabla_{x,y}e_n}\int_{e_n(y)}^{e_n(x)}1/t \,\dd t\\
	&\geq C_{n}\sum_{x,y\in V}b(x,y)\p{\nabla_{x,y}e_n}\int_{e_n(y)}^{e_n(x)}1/t\,\dd t \\   
	&\geq \tilde{C}_n\int^{\max_{K_n} e_n}_{\min_{K_n} e_n}1/t\,\dd t,
	\end{align*}
	where $\tilde{C}_n$ is a positive constant. Since $u$ is of bounded oscillation, $\lim_{n\to \infty}\tilde{C}_n\in (0,\infty)$.  By the properness of $ u$ and since $V$ is infinite, we have that $0$ or $\infty$ are accumulation points of $u$ in $V$, and therefore $\int^{\sup_V u}_{\inf_V u}1/t\,\dd t=\infty$. Consequently, $(wm)_p(u^{{1}/{q}})=\infty$.
\end{proof}

\subsection{Optimality Near Infinity}\label{sec:optimal}
Here, we finally prove Theorem~\ref{thm:mainresultc<0}. Because of Proposition~\ref{prop:criticalHardyNonPositive} and Proposition~\ref{prop:nullcritNonPositive}, we are only left to show the optimality near infinity. This, however, is a consequence of Theorem~\ref{thm:decayImpliesOpti}.

\begin{proof}[Proof of Theorem~\ref{thm:mainresultc<0}]
	Proposition~\ref{prop:criticalHardyNonPositive} and Proposition~\ref{prop:nullcritNonPositive} imply that the function $w:=H(u^{1/q})/u^{(p-1)/q}$, $q:=p/(p-1)$, multiplied with $m$ is a $p$-Hardy weight such that $h-(wm)_p$ is null-critical with Agmon ground state $u^{1/q}$. 
	
	Furthermore, since $u$ is proper and of bounded oscillation, also $u^{1/q}$ is it. Thus, we can apply Theorem~\ref{thm:decayImpliesOpti} and get that $wm$ is indeed an optimal $p$-Hardy weight.
\end{proof}

\section{Applications}\label{appli}
Here, we briefly discuss two applications of the Hardy inequality, an uncertainty-type principle and a Rellich-type inequality. 

\subsection{Heisenberg-Pauli-Weyl-type Inequality}\label{sec:Heisi}
The famous Heisenberg-Pauli-Weyl uncertainty principle is a direct consequence of the Hardy inequality. It asserts, roughly speaking, that the position and momentum of a particle can not be determined simultaneously. For further information confer e.g. \cite[Subsection~1.6]{BEL15} for a detailed discussion in the Euclidean space, \cite{KOe09, KOe13, K18Heisenberg} for  Riemannian manifolds, or \cite[Section~3]{BGGP20} for a recent version in the hyperbolic space.

Assume that $h$ is subcritical in $X$ with Hardy weight $w$. Then by the Hölder and Hardy  inequality, we derive for all $\phi\in C_c(\supp (w))$
\begin{align*}
	\sum_{x\in \supp (w)}\abs{\phi (x)}^p&= \sum_{x\in \supp (w)} \bigl( w^{-1/p}(x)\abs{\phi(x)}^{p-1}\bigr)\bigl( w^{1/p}(x)\abs{\phi(x)}\bigr) \\
	&\leq \left( \sum_{x\in \supp (w)}  w^{-1/(p-1)}(x)\abs{\phi(x)}^{p} \right)^{(p-1)/p} \left( \sum_{x\in X}  w(x)\abs{\phi(x)}^{p}\right)^{1/p}\\
	&\leq  \left( \sum_{x\in \supp (w)}  w^{-1/(p-1)}(x)\abs{\phi(x)}^{p} \right)^{(p-1)/p} \cdot  h^{1/p}(\phi).
\end{align*}
This is a quasilinear version of the Heisenberg-Pauli-Weyl inequality on graphs.

For the special case of the line graph on $X=\NN_0$ discussed in the introduction, we thus obtain by taking the optimal Hardy weight

\begin{align*}
	w_p^{\mathrm{opt}}(n):=\left( 1-\left(1-\frac{1}{n}\right)^{\frac{p-1}{p}}\right)^{p-1}-\left( \left(1+\frac{1}{n}\right)^{\frac{p-1}{p}}-1\right)^{p-1}, \qquad n\in \NN, 
	\end{align*}
	the following sequence of inequalities on $C_c(\NN)$
	
\begin{align*}
	\sum_{n\in \NN}\abs{\phi (n)}^p
	&\leq  \left( \sum_{n\in \NN}  (w_p^{\mathrm{opt}})^{-1/(p-1)}(n)\abs{\phi(n)}^{p} \right)^{(p-1)/p} \cdot \left( \sum_{n=1}^\infty \abs{\phi(n)-\phi(n-1)}^p \right)^{1/p}\\
	&\leq  \frac{p}{p-1} \cdot \left( \sum_{n\in \NN} n^{p/(p-1)}\abs{\phi(n)}^{p} \right)^{(p-1)/p} \cdot \left( \sum_{n=1}^\infty \abs{\phi(n)-\phi(n-1)}^p \right)^{1/p},
\end{align*}
where the second inequality follows from the comparison with the classical Hardy weight $w^H_p(n)= (1-(1-p))^p\cdot (1/n^p), n\in \NN$, confer \cite{FKP} for details.	
 
\subsection{Rellich-type Inequality}
 Another inequality which has attracted a lot of attention is the Rellich inequality. For more details on the history and generalisations  of this inequality in different settings, we suggest the papers \cite{BGGP20, KPP21, KOe09, KOe13} and the monograph \cite{BEL15}, and references therein. 
 
Assume that $h$ is subcritical in $X$ with strictly positive Hardy weight $w$. Then by the Hardy inequality, Green's formula (Lemma~\ref{lem:GreensFormula}), and the Hölder inequality, we obtain for all $\phi\in C_c(\supp (w))$ 

\begin{align*}
	\sum_{x\in X}w(x)\abs{\phi (x)}^p&\leq h(\phi)= \ip{H\phi}{\phi}_X= \ip{H\phi w^{-1/p}}{\phi w^{1/p}}_{\supp (w)}	 \\
	&\leq \left( \sum_{x\in\supp(w)} w^{-1 /(p-1)}(x)\abs{H\phi(x)}^{p/(p-1)} \right)^{(p-1)/p}\cdot \norm{\phi}_{p,w}.
\end{align*}
 This implies the Rellich-type inequality
 \begin{align*}
	\sum_{x\in X}w(x)\abs{\phi (x)}^p\leq  \sum_{x\in\supp(w)} w^{-1 /(p-1)}(x)\abs{H\phi(x)}^{p/(p-1)}, \qquad \phi\in C_c(\supp (w)).
\end{align*}

We have to admit that classical Rellich inequalities have different powers of the Hardy weight on both sides of the equation. This, however, needs more effort.

For the special case of the line graph on $X=\NN_0$ discussed in the introduction, we obtain by taking the classical Hardy weight $w^H_p$, and optimal Hardy weight $w_p^{\mathrm{opt}}$ from the previous subsection,
	
\begin{multline*}
	\left(\frac{p-1}{p}\right)^{p}\sum_{n\in \NN}\left(\frac{\abs{\phi (n)}}{n}\right)^p
\leq\sum_{n\in\NN}w_p^{\mathrm{opt}}(n)\abs{\phi (n)}^p\\
\leq  \sum_{n\in\NN} w_p^{-1 /(p-1)}(n)\abs{L\phi(n)}^{p/(p-1)}	
	\leq \left(\frac{p}{p-1}\right)^{1/(p-1)} \sum_{n\in \NN} \abs{n \cdot L\phi(n)}^{p/(p-1)},
\end{multline*}
for all $\phi\in C_c(\NN)$, where 
\[L\phi(n)= \sum_{m : \abs{n-m}=1} \p{\nabla_{n,m}\phi}, \qquad n\in \NN.\]

\section{Examples}\label{examples}
Here we discuss briefly some prominent examples of graphs: the natural numbers, homogeneous trees, model graphs and the Euclidean lattice. Thereafter, we give an example of a non-locally finite graph, where the formula from Theorem~\ref{thm:mainresultc<0} does not result in an optimal Hardy weight.

\begin{example}[$\NN$]
By Corollary~\ref{cor:mainresultc>0}, we see that the $p$-Hardy weight on $\NN$ obtained in \cite{FKP} and stated in the introduction is not only an improvement of the original Hardy weight but also optimal. By Versano's lemma, Lemma~\ref{lem:opti}, we also get optimal $p$-Hardy weights for a large class of Schrödinger operators on $\NN$.
\end{example}

On $\RR^d$, $p\neq d$, with the free $p$-Laplacian, the classical Hardy weight $W(x)=(p-1/p)^p\abs{x}^{-p}$ is an optimal $p$-Hardy weight on $\RR^d\setminus \set{0}$, see \cite[p. 4]{DP16} which follows by taking simply the Green's function on $\RR^d$ as reference function. Hence, for $d=1$, we have an significant difference between the continuous and discrete model in terms of the $p$-Hardy weights. The optimal $p$-Hardy weight on $(0,\infty)$ is only the first term of the Taylor expansion of the optimal $p$-Hardy weight on $\NN$.

\begin{example}[$\TT_{d+1}$, $d\geq 2$]\label{ex:treeOptimal}
A homogeneous regular tree $\TT_{d+1}$ is  a connected tree such that every vertex has $d + 1$ neighbours, $d\geq 2$. If $x$ and $y$ are neighbours then $b(x,y)=1$. Moreover, we set $m=1$ and $c=0$, and fix the root $o\in \TT_{d+1}$.	

It is not difficult to see that the Green's function $G_o$ is given by $G_o(r)=C_p\cdot d^{-r/(p-1)}$, $r\geq 0$ for some constant $C_p>0$. The Green's function $G_o$ is proper and of bounded oscillation. Obviously, also the remaining conditions in Corollary~\ref{cor:mainresultc>0} are  fulfilled. Hence, an optimal $p$-Hardy weight is given by
\begin{align*}
		w(r)=\begin{cases} (d+1)(1-d^{-1/p})^{p-1}, &r=0 \\
		d(1-d^{-1/p})^{p-1}-(d^{1/p}-1)^{p-1}, &r>0.
		\end{cases}
	\end{align*}	
	Note that for $p=2$, this is the result obtained in \cite[Eq. (1.3)]{BSV21}. Moreover, this optimal weight is constant for $r\geq 1$ and does not converge to zero. In the case of $p=2$, $w(r)$ for $r\geq 1$ is exactly the bottom of the $\ell^2$-spectrum of the free Laplacian on $\TT_{d+1}$.
\end{example}

Real hyperbolic spaces $\HHH (\RR^d)$ are often considered to be a counterpart in the continuum to homogeneous trees. Here, similar results were obtained, see \cite{BDAGG17, BGG17} (or for the closely connected Damek-Ricci spaces see \cite{FP23}). However, using the analogue method from \cite{DP16}, one can show that the weight in the continuum is larger than a similar positive constant and converges exponentially fast to it. Hence, this is somehow the reversed observation than between $\NN$ and $(0,\infty)$.

The calculation on $\TT_{d+1}$ can be generalised easily to all subcritical model graphs. This is done next.
\begin{example}[Model graphs]
We need some notation first: The function $d\colon X\times X \to [0,\infty)$ be the combinatorial graph distance, that is, its value is the least number of edges of a path connecting two given vertices. Moreover, for some fixed vertex $o\in X$ we set \[B_r(o):= \set{x\in X : d(x,o)\leq r},\quad \text{and }\quad S_r(o):= \set{x\in X : d(x,o)= r}.\] 
The inner curvature $k_-\colon X\to [0,\infty)$ and outer curvature $k_+\colon X\to [0,\infty)$ are defined via
\[ k_\pm (x)= \frac{1}{m(x)}\sum_{y\in S_{r\pm 1}(o)}b(x,y), \quad x\in S_r(o), \qquad k_-(o)=0. \]

Let us fix $o\in X$. A graph $b$ on $X$ with potential $c$ is called \emph{model graph} with respect to $o$ if $k_\pm$ and $c/m$ are spherically symmetric functions, i.e., $k_\pm(x)=k_\pm(y)$ and $c(x)/m(x)=c(y)/m(y)$ for all $x,y\in S_r(o)$ and all $r\geq 0$. The $p$-Laplacian of spherically symmetric function $f=f(r)$ is given by
\[ Lf(0)=k_+(0)\p{\nabla_{0,1}f},  \] and \[ Lf(r)=k_+(r)\p{\nabla_{r,r+1}f}+k_-(r)\p{\nabla_{r,r-1}f}, \qquad r\geq 1.\]

If the Green's function to the free $p$-Laplacian on a locally finite model graph exists, it is given by 
\[
G_o(x)=G_0(r)= \sum_{k=r}^{\infty} \left(\frac{m(o)}{\partial B_k(o)}\right)^{1/(p-1)}, \qquad x \in S_r(o), r\geq 0.
\]
If this is the case, Corollary~\ref{cor:mainresultc>0} can be applied. Setting $g=G_o^{(p-1)/p}$, we calculate
	\[ w(r):=\frac{Lg(r)}{g^{p-1}(r)}=\begin{cases}k_+(0)\left(1-\frac{g(1)}{g(0)} \right)^{p-1} &r=0,\\ k_+(r)\left(1-\frac{g(r+1)}{g(r)} \right)^{p-1}- k_-(r)\left(\frac{g(r-1)}{g(r)}-1 \right)^{p-1}, &r\geq 1.\end{cases}\]
	Hence, $wm$ is an optimal $p$-Hardy weight.
\end{example}

\begin{example}[Free $p$-Laplacian on $\ZZ^d$, $d>p$] In \cite{Prado, M77}, flows are constructed to show that $\ZZ^d$, $d>1$, is $p$-hyperbolic if and only if $1\leq p < d$, i.e., the corresponding $p$-energy functional is subcritical if and only if $1\leq p < d$. These flows implicitly define functions $u$ which satisfy the assumptions in Corollary~\ref{cor:mainresultc>0}. Thus, an optimal $p$-Hardy weight is given by $w:=Lu^{(p-1)/p}/u^{(p-1)^2/p}$, $1<p<d$.
\end{example}

A downside of the main result of this chapter is that it  is restricted to essentially locally finite graphs. It is natural to ask if the formula also yields optimal weights in beyond that setting. Next, we shown an example that this not the case if the formula in Theorem~\ref{thm:mainresultc<0} is used with Green's functions on star graphs.

\begin{example}[Star graphs]
	Even though, we cannot apply Theorem~\ref{thm:mainresultc<0} on a star graph, we can calculate the formula and check by hand if it satisfies all desired properties. 
	
	A star graph on $X=\NN_0$ with root $0$ is a locally summable graph such that $b(0,n)> 0$ for all $n\in \NN$, and $b(m,n)=0$ for all $m, n \in \NN$, i.e., every vertex is only connected to the root $0$. Let us add any potential $c$ such that $h$ is subcritical (e.g. take a positive potential $c\gneq 0$). Let  $G_0$ denote the minimal positive Green's function with $HG_0=1_0$. Note that $HG_0=0$ and $G_0>0$ on $\NN$ implies
	\begin{align}\label{eq:starCritical}
	c(k)=b(k,0)\p{\frac{G_0(0)}{G_0(k)}-1}, \qquad k\in \NN.
	\end{align}
	Hence, by defining $c$ and $b$ properly, we get a proper Green's function of bounded oscillation (take e.g. $c(k)=1/k^3$, $b(k,0)=1/k^2$ for $k\in\NN$). Furthermore, by the equality above and since $G_0\in F$, we get $G_0\in \ell^{p-1}(X,\abs{c})$ which, in turn, implies $LG_0\in \ell^1(X,m)$. Moreover, for $k\geq 1$,
\begin{align*}
		HG_0^{(p-1)/p}(k)=\frac{G_0^ {(p-1)^2/p}(k)}{m(k)}\left(b(k,0)\p{1-\left(\frac{G_0(0)}{G_0(k)}\right)^{(p-1)/p}}+ c(k)\right).
	\end{align*}
	Hence, our candidate for a weight $w$ is defined on $\NN$ via
	\begin{align*}
		w(k)&:= \frac{HG_0^{(p-1)/p}(k)}{G_0^{(p-1)^2/p}(k)}=\frac{1}{m(k)}\left(b(0,k)\p{1-\frac{G_0^{(p-1)/p}(0)}{G_0^{(p-1)/p}(k)}}+ c(k)\right)\\
		&= \frac{b(0,k)}{m(k)}\left(\p{1-\frac{G_0^{(p-1)/p}(0)}{G_0^{(p-1)/p}(k)}}+ \p{\frac{G_0(0)}{G_0(k)}-1}\right). 
	\end{align*}
	Clearly, $w>0$ on $\NN$. Note that this defines also the value at $0$ implicitly.

	The natural candidate for a ground state of $h-(wm)_p$ is $u=G_0^{(p-1)/p}$. By Hölder's inequality and the local summability of the graph we have that $u\in \FF$. Let us set $e_n=1_{[0,n]}\cdot u$ for $n\in\NN$. Then $0\leq e_n\nearrow u$ as $n\to\infty$, and $(e_n)$ is a good candidate for a null-sequence. Note that $Hu(k)=He_n(k)=w(k)e_n^{p-1}(k)=w(k)u^{p-1}(k)$ for all $0< k\leq n$. Hence,

\begin{align*}
 		(h-(wm)_p)(e_n)&=\ip{He_n}{e_n}_{\NN_0}- (wm)_p(e_n)\\
 		&= (He_n(0)-Hu(0))u(0)m(0) \\
 		&= u^{p}(0)\sum_{k=n+1}^\infty b(0,k)\left(1-\p{1-\frac{u(k)}{u(0)}}\right).
\end{align*} 	
	Let us assume that $c\gneq 0$. By Equation~\eqref{eq:starCritical}, we get that $u(0)\geq u(k)$ for all $k\in \NN$. Moreover, by the local summability of the graph, we get
\begin{align*}
\sum_{k=n+1}^\infty b(0,k)\left(1-\left(1-\frac{u(k)}{u(0)}\right)^{p-1}\right) \leq \sum_{k=n+1}^{\infty} b(0,k)< \infty.
\end{align*}
	Since $1-\left(1-u(k)/u(0)\right)^{p-1}\geq 0$, we can use the theorem of dominated convergence, and get $(h-(wm)_p)(e_n)\to 0$ as $n\to \infty$. Hence, $h-(wm)_p$ is critical with Agmon ground state $u$.
	
	Let us turn to the null-criticality: if we show that $u\not\in \ell^p(\NN_0,wm)$, then $h-(wm)_p$ is null-critical with respect to $wm$. Since $e_n\nearrow u$, we have $(wm)_p(e_n)\nearrow (wm)_p(u)$. Since $(e_n)$ is a null sequence of $h-(wm)_p$, it suffices to have a look at $h(e_n)$. Here, we calculate
	\begin{align}
		\notag h(e_n)&=\ip{He_n}{e_n}_{\NN_0} 
		=\sum_{k=0}^nHe_n(k)e_n(k)m(k) \\
		\label{eq:star3}	&=\sum_{k=0}^nb(0,k)\abs{\nabla_{0,k}u}^p+\sum_{k=n+1}^\infty b(0,k)u^{p}(0)+ \sum_{k=0}^n c(k)u^p(k).	
	\end{align}
	Since $G_0\in \FF_{b,p}$ we get from Hölder's inequality and the local summability of the graph that $u\in \FF_{b,p+1}$, which implies that the first sum remains finite as $n\to\infty$. Furthermore, since the graph is locally summable the second sum stays finite. Because of $HG_0=1_0$ on $\NN_0$, we have \[m(0)= \sum_{n=1}^\infty c(n)G_0^{p-1}(n).\] Hence, the third sum in \eqref{eq:star3} converges monotonously to $m(0)+ c(0)G_0^{p-1}(0)\in (0,\infty)$.   Hence, $u\in \ell^p(\NN_0,wm)$, and the corresponding functional is positive-critical. 
	
	Therefore, the formula of Theorem~\ref{thm:mainresultc<0} applied to a Green's function on a star graph does not result in an optimal weight.
\end{example}

\appendix 
\section{Criticality Implies Increasing Null-Sequences}\label{app} 
Here we show that criticality implies the existence of an \emph{increasing} null sequence.

There is nothing to prove for $V$ being a singleton. Denote by $u$ the Agmon ground state on $V$. Take an arbitrary increasing exhaustion $(K_n)$ of $V$ with finite, non-empty and connected sets, and some $o\in K_1$. Without loss of generality, we can assume that $u(o)=1$. 

Let $H_n$ be the $p$-Schrödinger operator we obtain by adding $ m/n$ to the potential $c$ of $H$, $n\in \NN$, with $p$-energy functional $h_n$. Then, for all $n\in \NN$,
	\[\lambda_0(W,H_n):=\inf_{\phi\in C_c(V), \norm{\phi}^p_{p,m}=1}h(\phi)\geq 1/n >0, \qquad \emptyset \neq W\sse V.\]

By the maximum principle, \cite[Proposition~3.10]{F:AAP}, we get the existence of a sequence $(v_n)$ in $C(K_n)$ such that $H_nv_n= g_n$ on $K_n$ for any $0\leq g_n\in C(K_n)$. Assume that $0\lneq g_n\to g$ pointwise. Then, again by \cite[Proposition~3.10]{F:AAP}, $v_n>0$ on $K_n$. 

If $v_n(o)\to 0$ as $n\to\infty$, then by the Harnack inequality,  \cite[Lemma~4.4]{F:GSR}, $v_n\to 0$ on $V$ which is a contradiction unless $g=0$. However, $0$ is not a positive function. This motivates to consider $w_n:=v_n/v_n(o)$ and $\tilde{g}_n:= g_n/v_n^{p-1}(o)$ instead.

Let us set
\begin{align*}
S^+_{o}(K_n,H_n):=\set{u\in \FF(K_n) : u(o)=1, H_nu \geq 0 \text{ on } K_n, u\geq 0 \text{ on } K_n\cup \partial K_n }.
\end{align*}

Note that $(w_n)_{n\geq n_0}$ is in $S^{+}_{o}(K_{n_0}, H_{n_0})$ for any $n\geq n_0\in \NN$. This implies by the Harnack inequality also that $w_n$ cannot converge to $\infty$. Thus, by the Harnack principle, \cite[Lemma~3.2]{F:AAP}, $(w_n)$ converges pointwise to some $w\in S^{+}_{o}(K_{n_0},H_{n_0})$ for all $n_0\in \NN$, and $0\leq H_{n_0}w(x) = Hw(x)+w^{p-1}(x)/n_0$ for all $x\in K_{n_0}$. Letting $n_0\to\infty$, we see that the limit $w$ is positive and $p$-superharmonic on $V$. By Theorem~\ref{thm:charCriti}, this is a contradiction unless $w=u$. Note that this implies the convergence of $(\tilde{g}_n)$ to $0$.

Our first candidate of an increasing null-sequence is $(w_n)$. Let us try to apply the weak comparison principle, \cite[Proposition~5.3]{F:AAP}. Obviously, $w_{n}\leq w_k$ on $X\setminus K_{n}$, and $k\geq n$. 
Moreover, for all $x\in K_{n}$, and $k\geq n$,
\begin{align*}
H_nw_{n}(x)- H_nw_k(x)= \tilde{g}_{n}(x)-\tilde{g}_k(x)- \left(\frac{1}{n}-\frac{1}{k} \right)w_k^{p-1}(x).
\end{align*}
Because of $w_k\in S^+_o(K_{n},H_{n})$, let us now choose the specific function $g_n= 1_o$. By the weak comparison principle, $(v_n)$ is increasing, and by the first part, it converges pointwise to $\infty$. Then, there is a subsequence $(\tilde{g}_{n_k})$ such that for all $x\in K_n$, $n_k<n_m$,
\[ \tilde{g}_{n_k}(x)-\tilde{g}_{n_m}(x) \leq \left(\frac{1}{n_k}-\frac{1}{n_m} \right)w_{n_m}^{p-1}(x).\]  
Thus, $(w_{n_k})$ is increasing by the weak comparison principle. 

By construction,  $0\leq g_{n_k}w_{n_k}\to 0\cdot u =0$ pointwise on $V$. Hence, there is a decreasing sub-subsequence $( g_{n_{k_l}}w_{n_{k_l}})$. Thus, we conclude with the aid of Green's formula and monotone convergence,
\[ 0\leq h(w_{n_{k_l}})\leq h_{n_{k_l}}(w_{n_{k_l}})= \ip{g_{n_{k_l}}}{w_{n_{k_l}}}_V\to 0, \quad n\to \infty. \]
Hence, we have an increasing null sequence.

 \noindent\textbf{Acknowledgements.} The author wishes to express his sincere gratitude to Matthias Keller for introducing him to the topic. Furthermore, the author thanks Yehuda Pinchover and Andrea Adriani for  helpful comments. The main parts of this paper were written when the authors was a doctoral student at the University of Potsdam, and was supported by the Heinrich Böll foundation (grant nr. P139140).

\noindent\textbf{Declarations.}

\noindent\textbf{Competing Interests.} The author has no competing interests to declare that are relevant to the content of this article.

\noindent\textbf{Data Availability Statement.} Data sharing is not applicable to this article as no datasets were generated or analysed during the current study.

\printbibliography
\end{document}